\def\inte#1{
\displaystyle\mathop{#1\kern0pt}^\circ }
\let\pa=\partial
\let\d=\delta
\let\f=\frac
\let\p=\psi
\let\D=\Delta
\let\wt=\widetilde
\def\cC{{\mathcal C}}
\def\cF{{\mathcal F}}
\def\cI{{\mathcal I}}
\def\cK{{\mathcal K}}
\def\cL{{\mathcal L}}
\def\cS{{\mathcal S}}
\def\pa{\partial}
\def\grad{\nabla}
\def\dB{\dot{B}}
\def\virgp{\raise 2pt\hbox{,}}
\def\cdotpv{\raise 2pt\hbox{;}}
\def\eqdefa{\buildrel\hbox{\footnotesize def}\over =}
\def\C{\mathop{\mathbb C\kern 0pt}\nolimits}
\def\DD{\mathop{\mathbb D\kern 0pt}\nolimits}
\def\EE{\mathop{{\mathbb E \kern 0pt}}\nolimits}
\def\K{\mathop{\mathbb K\kern 0pt}\nolimits}
\def\N{\mathop{\mathbb N\kern 0pt}\nolimits}
\def\Q{\mathop{\mathbb Q\kern 0pt}\nolimits}
\def\R{\mathop{\mathbb R\kern 0pt}\nolimits}
\def\SS{\mathop{\mathbb S\kern 0pt}\nolimits}
\def\ZZ{\mathop{\mathbb Z\kern 0pt}\nolimits}
\def\TT{\mathop{\mathbb T\kern 0pt}\nolimits}
\def\P{\mathop{\mathbb P\kern 0pt}\nolimits}
\newcommand{\la}{\lambda}
\newcommand{\Z}{{\ZZ}}
\def\dv{\mbox{div}}
\def\dive{\mathop{\rm div}\nolimits}
\def\Supp{\mathop{\rm Supp}\nolimits\ }
\def\no{\noindent}
\def\na{\nabla}
\def\p{\partial}
\newcommand{\beq}{\begin{equation}}
\newcommand{\eeq}{\end{equation}}
\newcommand{\ben}{\begin{eqnarray}}
\newcommand{\een}{\end{eqnarray}}
\newcommand{\beno}{\begin{eqnarray*}}
\newcommand{\eeno}{\end{eqnarray*}}
\newcommand{\andf}{\quad\hbox{and}\quad}
\newcommand{\with}{\quad\hbox{with}\quad}
\newtheorem{defi}{Definition}[section]
\newtheorem{thm}{Theorem}[section]
\newtheorem{lem}{Lemma}[section]
\newtheorem{rmk}{Remark}[section]
\newtheorem{cor}{Corollary}[section]
\newtheorem{prop}{Proposition}[section]
\renewcommand{\theequation}{\thesection.\arabic{equation}}
\begin{document}

\title[Well-posedness of $2$-D  inhomogeneous incompressible Navier-Stokes system ]
{On the global existence and uniqueness  of solution to $2$-D  inhomogeneous incompressible Navier-Stokes equations in  critical spaces}
\bigbreak\medbreak
\author[H.  Abidi]{Hammadi Abidi}
\address[H.  Abidi]{D\'epartement de Math\'ematiques
Facult\'e des Sciences de Tunis
Universit\'e de Tunis EI Manar
2092
Tunis
Tunisia}\email{hammadi.abidi@fst.utm.tn}
\author[G. Gui]{Guilong Gui}
\address[G. Gui]{School of Mathematics and Computational Science, Xiangtan University,  Xiangtan 411105,  China}\email{glgui@amss.ac.cn}
\author[P. Zhang]{Ping Zhang}
\address[P. Zhang]{Academy of Mathematics $\&$ Systems Science, The Chinese Academy of Sciences, Beijing 100190, China;\\
School of Mathematical Sciences, University of Chinese Academy of Sciences, Beijing 100049, China} \email{zp@amss.ac.cn}

\setcounter{equation}{0}

\maketitle
\begin{abstract} In this paper, we establish the global existence and uniqueness of solution to $2$-D inhomogeneous incompressible  Navier-Stokes equations \eqref{1.2} with initial data in the critical spaces. Precisely, under the assumption that the initial velocity  $u_0$ in $L^2 \cap\dot B^{-1+\frac{2}{p}}_{p,1}$ and the initial density $\rho_0$ in $L^\infty$ and having a positive lower bound, which satisfies
$1-\rho_0^{-1}\in \dot B^{\frac{2}{\lambda}}_{\lambda,2}\cap L^\infty,$ for $p\in[2,\infty[$ and $\lambda\in [1,\infty[$ with $\frac{1}{2}<\frac{1}{p}+\frac{1}{\lambda}\leq1,$ the system \eqref{1.2} has a global solution. The solution is unique if $p=2.$
With additional assumptions on the initial density in case $p>2,$ we can also prove the uniqueness of such solution. In particular,
this result improves the previous work  in \cite{AG2021} where  $u_{0}$ belongs to $\dot{B}_{2,1}^{0}$ and $\rho_0^{-1}-1$ belongs to $\dot{ B}_{\frac{2}{\varepsilon},1}^{\varepsilon}$, and we also remove the assumption that the initial density is close enough to a positive constant  in \cite{DW2023} yet with additional regularities on the initial density here.
\end{abstract}

\noindent {\sl Keywords:}  Inhomogeneous Navier-Stokes equations; Global well-posedness; Critical spaces
\vskip 0.2cm

\noindent {\sl Mathematics Subject Classification:} 35Q30, 76D03

\renewcommand{\theequation}{\thesection.\arabic{equation}}
\setcounter{equation}{0}

\section{Introduction}
In this paper, we investigate the global well-posedness   of the following
$2$-D inhomogeneous incompressible Navier-Stokes equations:
\begin{equation}\label{1.2}
\begin{cases}
\pa_t \rho + \dv (\rho u)=0,\qquad (t,x)\in\R_+\times\R^2, \\
\rho(\pa_t u +u\cdot\nabla u )-\Delta u+\grad\Pi=0, \\
\dv\, u = 0, \\
(\rho,u)|_{t=0}=(\rho_0,u_0),
\end{cases}
\end{equation}
where the unknowns $\rho$ and $u=(u_1,u_2)^T$ stand for the density and velocity of the fluid respectively, and $\Pi$  is a  scalar pressure function, which guarantees the divergence free condition of the velocity field. Such a system can be used to describe the mixture of several immiscible fluids that are incompressible and with different densities, it can  also characterize a fluid containing a molten substance.

It is easy to observe that for any smooth enough solution $(\rho, u)$ of \eqref{1.2}, one has the following energy law:
\beq \label{ener}
\f12\int_{\R^2}\rho |u|^2\,dx+\int_0^t\int_{\R^2}|\na u|^2\,dx\,dt'=\f12\int_{\R^2}\rho_0|u_0|^2\,dx.
\eeq
Based on the energy law, Kazhikov \cite{KA} proved  that the $d$-dimensional system \eqref{1.2}  (with $d=2, 3$) has a global weak solution
provided that the initial density is bounded from above and away from vacuum, the initial velocity belongs to $H^1$ (the size of $H^1$ norm
should be sufficiently small in three space dimension).
Danchin and  Mucha \cite{DM13} solved the uniqueness problem with smoother velocity.
The uniqueness of Kazhikov
weak solution was solved in \cite{PZZ} (see \cite{CZZ, DW2023, Z20} for the improvements).
Lately Danchin and Mucha \cite{DM}
established  the existence and uniqueness of such solution even allowing the appearing of vacuum.
In general, DiPerna and Lions \cite{DL, LP} proved the global
existence of weak solutions to \eqref{1.2} in the energy space in any space
dimensions. Yet the uniqueness and regularities of such weak
solutions are listed as open questions  by Lions in \cite{LP}.


On the other hand, if the initial data of the density $\rho$ is away from zero, we
denote by $a\eqdefa{\rho^{-1}}-1$, then the system \eqref{1.2} can be
 equivalently reformulated as
\begin{equation}\label{1.3}
\quad\left\{\begin{array}{l}
\displaystyle \pa_t a + u \cdot \grad a=0,\qquad (t,x)\in \R_+\times\R^2,\\
\displaystyle \pa_t u + u \cdot \grad u+ (1+a)(\grad\Pi-\Delta\,u)=0, \\
\displaystyle \dv\, u = 0, \\
\displaystyle (a, u)|_{t=0}=(a_0, u_{0}).
\end{array}\right.
\end{equation}
Just as the classical Navier-Stokes equations, which corresponds to the case when $a=0$ in \eqref{1.3}, the system \eqref{1.3} also has
the following  scaling-invariant property: if $(a, u)$ solves \eqref{1.3} with initial data $(a_0, u_0)$, then for any $\ell>0$,
\begin{equation*} 
(a, u)_{\ell}(t, x) \eqdefa (a(\ell^2\cdot, \ell\cdot), \ell
u(\ell^2 \cdot, \ell\cdot))
\end{equation*}
is also a solution of \eqref{1.3} with initial data $(a_0(\ell\cdot),\ell
u_0(\ell\cdot))$. We call such functional spaces as critical spaces if the norms of
which are invariant under the scaling transformation $(a_0,u_0)\mapsto (a_0(\ell\cdot),\ell
u_0(\ell\cdot)).$

Danchin \cite{danchin04} first established the global well-posedness of the system \eqref{1.3} with
initial data in the almost critical Sobolev spaces. After the works \cite{A,A-P,DAN-03} in the critical framework,
 Danchin and Mucha \cite{DM1} eventually proved the global well-posedness
of  \eqref{1.3} with initial density being close enough to a positive constant in the multiplier space of
$\dot{B}^{-1+\f{d}p}_{p,1}(\R^d)$ and initial velocity being small enough in $\dot{B}^{-1+\f{d}p}_{p,1}(\R^d)$ for $1\leq p<2d.$
The work of \cite{A-G-Z-2} is the first to investigate the global well-posedness of the 3-D incompressible inhomogeneous Navier-Stokes equation with initial data in the critical spaces  and yet without   the size restriction on $a_0$. One may check \cite{DW2023} and references therein
for the recent progress in this direction.

In  two dimensions and with  initial density being bounded from above and away from vacuum,
 Danchin \cite{danchin04} proved the global well-poedness of the system \eqref{1.2} if
  $\rho_0^{-1}-1\in H^{1+\alpha}$ and $u_0\in H^\beta$ with $\alpha,\beta>0$.
  The  authors of \cite{A-Z} proved the global existence and uniqueness of the solution to the system \eqref{1.2} with variable
   viscosity when the viscosity is close enough to a positive constant,
    and  $\rho_0^{-1}-1\in\dot B^1_{2,1}\cap\dot B^{\alpha}_{\infty,\infty}$ with $\alpha>0$ and $u_0\in\dot B^0_{2,1}$ (one
    may check \cite{D97} for the existence result of the system \eqref{1.2} with $H^1$ initial data and also \cite{PZ20}
     together with the references therein for the rough density case).
     Haspot \cite{BHas} proved the global well-posedness of system \eqref{1.2} with small initial
      velocity $u_0\in \dot{B}^{\frac{2}{p_2}-1}_{p_2, r}$ and more regular initial density $\rho_0^{-1}-1 \in {B}^{\frac{2}{p_1}+\varepsilon}_{p_1, \infty}$ with
      some technical conditions on $p_1$, $p_2$, $r$ and $\varepsilon$. Recently, the first two authors of this paper
     improved the above result in \cite{AG2021} to that $u_{0}\in\dot{B}_{2,1}^{0}$ and $\rho_0^{-1}-1\in\dot{ B}_{\frac{2}{\varepsilon},1}^{\varepsilon}$ with $M_1\le\rho_0\leq M_2$ and $0<\varepsilon<1$.
      This is, to the best of our knowledge, the first global well-posedness result of \eqref{1.2}
       in the critical framework that does not require any smallness condition.
       More recently, based on  interpolation results, time weighted estimates and maximal regularity estimates for time evolutionary Stokes system in Lorentz spaces (with respect to the time variable), Danchin and Wang \cite{DW2023} obtained the existence and uniqueness of
       the system \eqref{1.2} when the initial data $\rho_0$ is close to a positive constant in $L^\infty$ and $u_0\in L^2 \cap\dot B^{-1+\frac{2}{p}}_{p,1}$ with $1<p<2$.

Inspired by \cite{AG2021},  we shall investigate the global well-posedness of the system \eqref{1.2} with initial data in the general critical spaces.
The main result  states as follows.

\begin{thm}\label{thm1.1}
{\sl Let  $M_1, M_2$ be two positive constants, $p\in [2, +\infty[$ and
$\lambda\in [1,+\infty[$ with $\frac{1}{2}<\frac{1}{p}+\frac{1}{\lambda}\le1.$
We assume that $u_0\in L^2 \cap\dot B^{-1+\frac{2}{p}}_{p,1}$ is a
solenoidal vector field and $1-\rho_0^{-1}\in
\dot B^{\frac{2}{\lambda}}_{\lambda,2}\cap L^\infty$
satisfies
\begin{equation}\label{1.4}
M_1\le\rho_0\leq M_2.
\end{equation}
Then the system \eqref{1.2} has a  global  solution $(\rho,u,\na\Pi)$  which satisfies
\begin{equation}\label{AG}
\begin{split}
&\rho^{-1}-1 \in \cC([0,\infty[;\, \dot{B}^{\frac{2}{\lambda}}_{\lambda,2} \cap L^\infty),\quad M_1\leq \rho(t, x) \leq M_2 \quad
\mbox{for all}\,\,(t, x) \in \mathbb{R}_+\times \mathbb{R}^2,
\\&
u\in  \cC([0,\infty[;\,L^2\cap\dot{B}^{-1+\frac{2}{p}}_{p,1})
\cap\widetilde L^1_{loc}(\R_+;\,\dot H^2)
\cap L^1_{loc}(\R_+;\,\dot B^{1+\frac{2}{p}}_{p,1}),
\\&
\nabla\Pi\in L^1_{loc}(\R_+;\,\dot{B}^{-1+\frac{2}{p}}_{p,1})
\cap L^1_{loc}(\R_+;\,L^2)
\andf
\partial_tu\in L^1_{loc}(\R_+;\,\dot{B}^{-1+\frac{2}{p}}_{p,1})
\cap\widetilde L^1_{loc}(\R_+;\,L^2).
\end{split}
\end{equation}
In particular, for $p=2$, this solution is unique. For $p\in]2,\infty[$,
if in addition, $\rho_0^{-1}-1\in \dot{B}^{2-\frac{2}{p}}_{\frac{p}{p-1},\infty}
\cap\dot{B}^1_{2,1},$
then the solution is unique and satisfies
$
\rho^{-1}-1\in C([0,\infty[;\,\dot{B}^{2-\frac{2}{p}}_{\frac{p}{p-1},\infty}\cap\dot B^1_{2,1}).
$
}
\end{thm}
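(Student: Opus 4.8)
The plan is to prove the theorem in three parts: uniform a priori estimates, construction by regularization and compactness, and a stability estimate for uniqueness. First, one mollifies the data $(a_0,u_0)$, with $a=\rho^{-1}-1$, solves the regularized systems on a maximal time interval, derives bounds that are \emph{uniform in the regularization parameter} and finite on every $[0,T]$, and passes to the limit, using the bounds on $\partial_tu$ for compactness. Since $\dive u=0$ and the density equation $\partial_ta+u\cdot\nabla a=0$ is a pure transport equation, one has at once $\|a(t)\|_{L^\infty}=\|a_0\|_{L^\infty}$, i.e. $M_1\le\rho(t,\cdot)\le M_2$, and the energy identity \eqref{ener} gives the global bound $\|\sqrt\rho\,u\|_{L^\infty_tL^2}^2+\|\nabla u\|_{L^2_tL^2}^2\le\|\sqrt{\rho_0}\,u_0\|_{L^2}^2$. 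Everything else rests on propagating the critical norms, and the central difficulty — the point of the improvement over works assuming $a_0$ small — is to do so \emph{without any smallness}, by letting the two-dimensional energy law play the role of the missing smallness.

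\textbf{Velocity estimates.} Multiplying the momentum equation by $1+a=\rho^{-1}$ one reads it as a forced time-dependent Stokes system
\[
\partial_tu-\Delta u+\nabla\Pi=-\,u\cdot\nabla u+a\,(\Delta u-\nabla\Pi),\qquad \dive u=0,
\]
and estimates it in two stages, writing $u=e^{t\Delta}u_0+\bar u$ throughout. \emph{Stage 1} ($L^2$ scale): using maximal regularity for the Stokes operator in $L^1_t(L^2_x)$ together with its Chemin--Lerner and Lorentz-in-time refinements, and Gagliardo--Nirenberg inequalities in dimension two, one bounds $u\cdot\nabla u$ and $a(\Delta u-\nabla\Pi)$ in $L^1_t(L^2_x)$ and absorbs the top-order pieces on the left; here the energy bound feeds the argument and — this is the specifically two-dimensional point — lets the whole estimate be closed on every finite $[0,T]$, with a bound depending only on the norms of the data and on $T$, through a Gronwall/continuation argument fed by the global energy. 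This yields $u\in\widetilde L^1_{loc}(\R_+;\dot H^2)$, $\partial_tu\in\widetilde L^1_{loc}(\R_+;L^2)$ and $\nabla\Pi\in L^1_{loc}(\R_+;L^2)$. \emph{Stage 2} ($\dot B^{-1+2/p}_{p,1}$ scale): bootstrapping on the same equation with maximal regularity in $L^1_t(\dot B^{-1+2/p}_{p,1})$ (and noting $e^{t\Delta}u_0\in L^1_{loc}(\R_+;\dot B^{1+2/p}_{p,1})$ since $u_0\in\dot B^{-1+2/p}_{p,1}$), one controls $u\cdot\nabla u$ by product laws plus the Stage~1 bounds, and controls $a(\Delta u-\nabla\Pi)$ by Besov paraproduct/remainder estimates for the rough factor $a\in\dot B^{2/\lambda}_{\lambda,2}\cap L^\infty$; this is exactly where the hypotheses enter — $\tfrac1p+\tfrac1\lambda>\tfrac12$ makes the remainder term $R(a,\cdot)$ land at the positive regularity index $\tfrac2\lambda-1+\tfrac2p$, while $\tfrac1p+\tfrac1\lambda\le1$ keeps the relevant Hölder exponent $(\tfrac1p+\tfrac1\lambda)^{-1}$ admissible (at least $1$). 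Since $\|a\|_{L^\infty}$ is not small, the resulting inequality cannot be closed by absorption on a fixed interval; one instead constructs a local-in-time solution with quantitative control and continues it, the a priori bound on each $[0,T]$ again being supplied through $\int_0^T\|\nabla u\|_{L^\infty}\,dt$ and the energy law, with the borderline logarithm absorbed by an Osgood inequality if necessary.

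\textbf{Density regularity and time-continuity.} Once $u\in L^1_{loc}(\R_+;\dot B^{1+2/p}_{p,1})\hookrightarrow L^1_{loc}(\R_+;\dot B^1_{\infty,1})$, so that $\nabla u\in L^1_{loc}(\R_+;L^\infty)$, the transport equation for $a$ propagates the density regularity: the Besov transport estimate gives $\|a(t)\|_{\dot B^{2/\lambda}_{\lambda,2}}\le\|a_0\|_{\dot B^{2/\lambda}_{\lambda,2}}\exp\!\bigl(C\!\int_0^t\|\nabla u\|_{L^\infty}\bigr)$ (with the endpoint $s=\tfrac2\lambda$ again handled by a logarithmic/Osgood argument), which together with $\|a(t)\|_{L^\infty}=\|a_0\|_{L^\infty}$ gives $a\in L^\infty_{loc}(\R_+;\dot B^{2/\lambda}_{\lambda,2}\cap L^\infty)$; the time-continuity asserted in \eqref{AG} then follows from the equations in the usual way, and the remaining memberships in \eqref{AG} follow by combining the two stages. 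This completes the existence part.

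\textbf{Uniqueness.} For $p=2$ the solution has $u\in\cC([0,\infty[;L^2)\cap\widetilde L^1_{loc}(\R_+;\dot H^2)$ and $\nabla\Pi\in L^1_{loc}(\R_+;L^2)$, which is enough regularity to run a direct stability estimate on the difference of two solutions issued from the same data — measuring the difference of densities in a space of negative regularity, or equivalently passing to Lagrangian coordinates in the spirit of \cite{DM13}, and closing by Gronwall. For $p>2$ the velocity is only borderline Lipschitz and this fails; the extra hypothesis $\rho_0^{-1}-1\in\dot B^{2-\frac2p}_{\frac{p}{p-1},\infty}\cap\dot B^1_{2,1}$ is chosen precisely so that $a_0$ belongs to two further critical-type density spaces (note $2-\tfrac2p=\tfrac2{p'}$ with $p'=\tfrac p{p-1}$), which persist along the flow by the same transport estimate and supply the missing control for the difference estimate, the density difference being measured in the conjugate negative-index space. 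This yields uniqueness together with the stated persistence $\rho^{-1}-1\in C([0,\infty[;\dot B^{2-\frac2p}_{\frac p{p-1},\infty}\cap\dot B^1_{2,1})$. The principal obstacle throughout is exactly this global-in-time closure of the critical velocity estimate in the absence of any smallness on the density; it is what forces the combined use of the two-dimensional energy law, sharp time-weighted and Lorentz-in-time maximal-regularity bounds for the Stokes system, and Besov product laws calibrated to the range $\tfrac12<\tfrac1p+\tfrac1\lambda\le1$.
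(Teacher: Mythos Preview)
Your outline has the right architecture (mollify, uniform bounds, compactness, stability), but the closing mechanism for the velocity estimate is not the one that works here, and this is the heart of the matter.

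\textbf{The main gap.} You propose to view the momentum equation as a forced Stokes system with right-hand side $-u\cdot\nabla u+a(\Delta u-\nabla\Pi)$ and to control $a(\Delta u-\nabla\Pi)$ by paraproduct rules. But the piece $T_a\Delta u$ satisfies $\|T_a\Delta u\|_{\dot B^{-1+2/p}_{p,1}}\lesssim\|a\|_{L^\infty}\|u\|_{\dot B^{1+2/p}_{p,1}}$, which is exactly the quantity you are trying to bound, with a coefficient $\|a\|_{L^\infty}$ that is \emph{not} small. Neither the energy law nor Lorentz-in-time Stokes estimates help you absorb this term; the Lorentz approach is precisely what \cite{DW2023} uses \emph{under} a smallness hypothesis on $a_0$, and the present paper does not invoke it. What the paper does instead is a frequency-localized commutator argument: after applying $\dot\Delta_q\mathbb{P}$ to $\partial_tu+u\cdot\nabla u-(1+a)(\Delta u-\nabla\Pi)=0$ and multiplying back by $\rho$, the dangerous term becomes the \emph{commutator} $[\dot\Delta_q\mathbb{P},a](\Delta u-\nabla\Pi)$. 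One then splits $a=(a-\dot S_ma)+\dot S_ma$: Lemma~\ref{lem2.6} bounds the high-frequency part by $\|a-\dot S_ma\|_{\dot B^{2/\lambda}_{\lambda,\infty}}\|u\|_{\dot B^{1+2/p}_{p,1}}$, which is absorbable once $m$ is large, while Lemma~\ref{lem2.7} bounds the low-frequency part by $2^m$ times lower-order norms, which is absorbable once $T_1$ is small. This two-parameter trick (large $m$, then small $T_1$) is what replaces smallness of $a_0$; your proposal does not contain it.

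\textbf{The global step.} You write that the a priori bound on each $[0,T]$ is ``supplied through $\int_0^T\|\nabla u\|_{L^\infty}$ and the energy law.'' This is circular: you need $\nabla u\in L^1_t(L^\infty)$ to propagate $\|a\|_{\dot B^{2/\lambda}_{\lambda,2}}$, but you need $\|a\|_{\dot B^{2/\lambda}_{\lambda,2}}$ to close the $\dot B^{1+2/p}_{p,1}$ estimate that gives $\nabla u\in L^1_t(L^\infty)$. The paper breaks the loop differently: from the local critical solution on $[0,T_1]$ one picks $t_0\in(0,T_1)$ with $u(t_0)\in H^1$, invokes existing \emph{global} $H^1$-level well-posedness (\cite{DM13,PZZ}) as a black box on $[t_0,\infty)$, and only then bootstraps the critical Besov norms using the already-known $L^2$-level control of $(\Delta u,\nabla\Pi)$ together with Lemma~\ref{lem2.5}.

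\textbf{Uniqueness for $p>2$.} Measuring $\delta a$ in $B^{1-2/p}_{p/(p-1),\infty}$ is correct, but the difference estimate for $\delta u$ lands in $\widetilde L^1_t(B^1_{2,\infty})$, and converting $\|\delta u\|_{L^1_t(L^\infty)}$ back to $\|\delta u\|_{\widetilde L^1_t(B^1_{2,\infty})}$ costs a logarithm; closing requires the Osgood lemma (after the logarithmic interpolation inequality \eqref{infty-est-1}), not a plain Gronwall. Your sketch omits this, and a direct Gronwall argument would not close.
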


Notice that for $1\le p<2$,
$\dot B^{-1+\frac{2}{p}}_{p,1}\hookrightarrow
 \dot B^{0}_{2,1}$
and for $p\in [4/3,2[,$ $\lambda=\frac{2p}{2-p}$, $(p,\lambda)$ satisfies $\frac{1}{2}<\frac{1}{p}+\frac{1}{\lambda}\le1,$ then
 we deduce from Theorem \ref{thm1.1} that the system \eqref{1.2}
has a unique global-in-time solution satisfying \eqref{AG}. Precisely

\begin{cor}\label{DW}
{\sl Let   $p\in [4/3,2[$ and $u_0\in\dot B^{-1+\frac{2}{p}}_{p,1}$ be a
solenoidal vector field, and $\rho_0$ satisfies \eqref{1.4} with $1-\rho_0^{-1}\in
\dot B^{-1+\frac{2}{p}}_{\frac{2p}{2-p},1}$.
Then the  system \eqref{1.2} has a unique global solution
$(\rho,u,\na\Pi)$  which satisfies
\begin{equation}\label{AGK}
\begin{aligned}
&\rho^{-1}-1 \in \cC([0,\infty[;\,\dot B^{-1+\frac{2}{p}}_{\frac{2p}{2-p},1}),\quad
u\in \cC([0,\infty[;\,\dot B^{-1+\frac{2}{p}}_{p,1})
\cap L^1_{loc}(\R_+;\,\dot B^{1+\frac{2}{p}}_{p,1}),
\\&
\nabla\Pi\in L^1_{loc}(\R_+;\,\dot B^{-1+\frac{2}{p}}_{p,1}),\quad
\partial_tu\in L^1_{loc}(\R_+;\,\dot B^{-1+\frac{2}{p}}_{p,1}).
\end{aligned}
\end{equation}
}
\end{cor}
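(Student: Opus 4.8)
The plan is to derive Corollary~\ref{DW} from Theorem~\ref{thm1.1} by a suitable choice of the density exponent $\lambda$, followed by a short transport bootstrap for the density. First I would set $\lambda\eqdefa\frac{2p}{2-p}$, which for $p\in[4/3,2[$ is finite and satisfies $\lambda\geq4$, and record the elementary identities $\frac1\lambda=\frac1p-\frac12$ and $\frac2\lambda=-1+\frac2p$. From the first, $\frac1p+\frac1\lambda=\frac2p-\frac12\in\,]\frac12,1]$ for $p\in[4/3,2[$, so the admissibility condition $\frac12<\frac1p+\frac1\lambda\leq1$ of Theorem~\ref{thm1.1} holds. From the second, $\dot B^{\frac2\lambda}_{\lambda,2}=\dot B^{-1+\frac2p}_{\lambda,2}$; since $\dot B^{-1+\frac2p}_{\lambda,1}\hookrightarrow\dot B^{-1+\frac2p}_{\lambda,2}$ and, by the critical Besov embedding in $\R^2$, $\dot B^{\frac2\lambda}_{\lambda,1}\hookrightarrow\dot B^0_{\infty,1}\hookrightarrow L^\infty$, the hypothesis $1-\rho_0^{-1}\in\dot B^{-1+\frac2p}_{\frac{2p}{2-p},1}$ already yields $1-\rho_0^{-1}\in\dot B^{\frac2\lambda}_{\lambda,2}\cap L^\infty$. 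Moreover $\dot B^{-1+\frac2p}_{p,1}\hookrightarrow\dot B^0_{2,1}\hookrightarrow L^2$ for $1\leq p<2$ (as already noted in the text), so $u_0\in L^2\cap\dot B^{-1+\frac2p}_{p,1}$. Combined with \eqref{1.4}, all hypotheses of Theorem~\ref{thm1.1} are met with this $\lambda$, and we obtain a global solution $(\rho,u,\nabla\Pi)$ satisfying \eqref{AG}; in particular $u\in\cC([0,\infty[;\dot B^{-1+\frac2p}_{p,1})\cap L^1_{loc}(\R_+;\dot B^{1+\frac2p}_{p,1})$, $\nabla\Pi\in L^1_{loc}(\R_+;\dot B^{-1+\frac2p}_{p,1})$ and $\partial_tu\in L^1_{loc}(\R_+;\dot B^{-1+\frac2p}_{p,1})$, which is all of \eqref{AGK} except the third‑index‑$1$ regularity of the density.

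For uniqueness I would invoke Theorem~\ref{thm1.1} a second time, now at the endpoint exponent $p=2$ while keeping the same $\lambda=\frac{2p}{2-p}$: here $\frac12+\frac1\lambda=\frac1p\in\,]\frac12,1]$, so the admissibility condition again holds, and we have just checked $u_0\in L^2\cap\dot B^0_{2,1}$ and $1-\rho_0^{-1}\in\dot B^{\frac2\lambda}_{\lambda,2}\cap L^\infty$. Hence Theorem~\ref{thm1.1} provides a solution that is \emph{unique} in the class \eqref{AG} with exponent $2$ and this $\lambda$. The solution constructed in the previous paragraph belongs to that class: the embeddings $\dot B^{-1+\frac2p}_{p,1}\hookrightarrow\dot B^0_{2,1}$ and $\dot B^{1+\frac2p}_{p,1}\hookrightarrow\dot B^2_{2,1}$ (valid since $p\le2$), together with the fact that the density space $\dot B^{\frac2\lambda}_{\lambda,2}\cap L^\infty$ is the same in both invocations, show that all the membership conditions in \eqref{AG} with exponent $2$ follow from those in \eqref{AG} with exponent $p$. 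By the $p=2$ uniqueness statement the two solutions coincide, so the solution of Corollary~\ref{DW} is unique.

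It remains to upgrade the density regularity to $\rho^{-1}-1\in\cC([0,\infty[;\dot B^{-1+\frac2p}_{\frac{2p}{2-p},1})$. Since $u\in L^1_{loc}(\R_+;\dot B^{1+\frac2p}_{p,1})$, the critical embedding in $\R^2$ gives $\nabla u\in L^1_{loc}(\R_+;\dot B^{\frac2p}_{p,1})\hookrightarrow L^1_{loc}(\R_+;L^\infty)$, so $u$ is Lipschitz in $x$ for a.e.\ $t$. As $a\eqdefa\rho^{-1}-1$ solves the transport equation $\partial_ta+u\cdot\nabla a=0$ with $\dv u=0$ and $a_0\in\dot B^{-1+\frac2p}_{\frac{2p}{2-p},1}$ (the regularity index $-1+\frac2p\in\,]0,\frac12]$ lying well inside the admissible range for transport estimates in homogeneous Besov spaces), the standard persistence estimate yields $a\in\cC([0,\infty[;\dot B^{-1+\frac2p}_{\frac{2p}{2-p},1})$ together with $\|a(t)\|_{\dot B^{-1+\frac2p}_{\frac{2p}{2-p},1}}\leq\|a_0\|_{\dot B^{-1+\frac2p}_{\frac{2p}{2-p},1}}\exp\bigl(C\int_0^t\|\nabla u(t')\|_{L^\infty}\,dt'\bigr)$. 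This establishes \eqref{AGK} and completes the argument. The only genuinely delicate point here is the parameter bookkeeping in the first two paragraphs: one must check that both invocations of Theorem~\ref{thm1.1}, with $p\in[4/3,2[$ and with $p=2$, respect the constraint $\frac12<\frac1q+\frac1\lambda\leq1$ for the relevant $q$, and that the constructed solution truly lies in the $p=2$ uniqueness class via the Besov embeddings; the final transport step is routine.
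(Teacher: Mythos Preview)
Your first invocation of Theorem~\ref{thm1.1} is illegal: that theorem is stated only for $p\in[2,+\infty[$, so you cannot apply it with the corollary's exponent $p\in[4/3,2[$. The parameter check $\frac12<\frac1p+\frac1\lambda\le1$ that you perform is therefore irrelevant to Theorem~\ref{thm1.1}; the admissibility constraint in the theorem couples $\lambda$ with the \emph{velocity} exponent of the theorem, which must be at least $2$. Consequently, nothing in your first paragraph actually furnishes $u\in\cC([0,\infty[;\dot B^{-1+\frac2p}_{p,1})\cap L^1_{loc}(\R_+;\dot B^{1+\frac2p}_{p,1})$, $\nabla\Pi\in L^1_{loc}(\R_+;\dot B^{-1+\frac2p}_{p,1})$, or $\partial_tu\in L^1_{loc}(\R_+;\dot B^{-1+\frac2p}_{p,1})$.

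Your second paragraph is in fact the correct starting point: apply Theorem~\ref{thm1.1} once, with velocity exponent $2$ and $\lambda=\frac{2p}{2-p}$ (here $\frac12<\frac12+\frac1\lambda\le1$ since $\lambda\ge4$), using the embeddings $\dot B^{-1+\frac2p}_{p,1}\hookrightarrow\dot B^0_{2,1}\hookrightarrow L^2$ and $\dot B^{\frac2\lambda}_{\lambda,1}\hookrightarrow\dot B^{\frac2\lambda}_{\lambda,2}\cap L^\infty$. This already gives global existence \emph{and} uniqueness in the $p=2$ framework. What remains, and is the actual content of the paper's proof of the corollary, is to \emph{upgrade} the regularity of this solution to the $p$-spaces. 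The paper does this by returning to the frequency-localized energy inequality \eqref{3.17}, bounding the commutators $[\dot\Delta_q\mathbb P,u\cdot\nabla]u$ and $[\dot\Delta_q\mathbb P,a](\Delta u-\nabla\Pi)$ in $L^p$ via a dedicated estimate (the inequality labeled \eqref{COMMU}, which uses $1<p<2$), applying Gronwall to get $u\in\widetilde L^\infty_t(\dot B^{-1+\frac2p}_{p,1})\cap L^1_t(\dot B^{1+\frac2p}_{p,1})$, and then reading off $\nabla\Pi,\partial_tu\in L^1_t(\dot B^{-1+\frac2p}_{p,1})$ from the pressure equation \eqref{presure-crit-1} and the momentum equation. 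Your transport bootstrap for $a$ is fine, but it is the easy part; the missing ingredient is this velocity/pressure bootstrap from $\dot B^0_{2,1}$-regularity to $\dot B^{-1+\frac2p}_{p,1}$-regularity.
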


\begin{rmk}\label{rmkthm1.1} In some sense, our result here removed the assumption in \cite{DW2023} that the initial data $\rho_0$ is close to a positive constant and also extends the case $p\in]1,2[$  to $p\in [4/3,\infty[$. We believe that Corollary \ref{DW} is correct even for $p\in ]1,4/3[,$ yet we shall not pursue this direction here.
\end{rmk}

\begin{rmk}\label{rmkthm1.2}
The main ideas used to prove the uniqueness part of  Theorem \ref{thm1.1} for the cases $p=2$ and $p \in ]2, +\infty[$ are quite different.
For the case when $p=2,$ we shall combine  the Lagrangian approach with the techniques in \cite{AG2021} to deal with the difference between any two solutions of \eqref{1.2} in the $L^2$ framework (see Proposition \ref{prop2.4} below), which is also different from the Lagrangian method in \cite{{DM1}} where the smallness of the variation of the initial density is required. While for the case when $p \in ]2, +\infty[$, without the smallness assumption on the variation of the initial density, it is difficult for us to close the estimate for the difference  in the $L^p$ framework
  if we use the Lagrangian approach. Instead, we shall perform the estimates in Euclidean coordinates and  rely on the Osgood Lemma to conclude  the uniqueness part in Section \ref{sect-global}.
\end{rmk}

The  structure of this paper lists as follows: In Section \ref{sect-prel}, we shall first collect  some basic facts on Littlewood-Paley theory,
and then to apply it to study some commutator's estimates, finally we shall apply the previous estimates to investigate the linearized equations
of \eqref{1.3}. In Section \ref{sect-Lip}, we shall derive the necessary {\it a priori} estimates used in the proof of Theorem \ref{thm1.1}. In Section \ref{sect-global}, we shall conclude the proof of Theorem \ref{thm1.1}.

\medbreak \noindent{\bf Notations:} For two operators $A, B$, we denote $[A, B]=AB-BA,$ the commutator between $A$ and $B$. For $a\lesssim b$, we mean that there is a uniform constant $C,$ which may be different on different lines, such that $a\leq Cb,$ and $C_{\rm in}$ denotes a positive constant depending  only on  the norm to the initial data. $a\thicksim b$ means that both $a\lesssim b$ and $b\lesssim a$. For $r\in [1, +\infty]$ and $\overline{\N}\eqdefa \mathbb{N}\cup\{-1\},$ we denote $\{c_{q, r}\}_{q \in \mathbb{Z}} $ (or $\{c_{q, r}\}_{q \in \overline{\N}} $) a sequence in $\ell^r(\mathbb{Z})$ (or $\ell^r(\overline{\mathbb{N}})$ ) such that $\|\{c_{q, r}\}_q\|_{\ell^{r}} =1$. In particular, we designate $c_{q,1}$ by $d_q$ and $c_{q,2}$ by $c_q.$

For $X$ a Banach space and $I$ an interval of $\R,$ we denote by ${\mathcal{C}}(I;\,X)$ the set of continuous functions on $I$ with values in $X,$ and by ${\mathcal{C}}_b(I;\,X)$ the subset of bounded functions of ${\mathcal{C}}(I;\,X).$ For $p\in[1,+\infty],$ the notation $L^p(I;\,X)$ stands for the set of measurable functions on $I$ with values in $X,$ such that $t\longmapsto\|f(t)\|_{X}$ belongs to $L^p(I).$

\renewcommand{\theequation}{\thesection.\arabic{equation}}
\setcounter{equation}{0}

\section{Preliminaries}\label{sect-prel}

\subsection{Basic facts on Littlewood-Paley theory}
The proof of Theorem \ref{thm1.1} requires  Littlewood-Paley theory. For the convenience of the readers, we briefly
recall some basic facts in the case of $x\in\R^2$ (see, e.g. \cite{BCD}).
Let $\chi(\tau)$ and $\varphi(\tau)$ be smooth functions such that
\begin{align*}
&\Supp \varphi \subset \Bigl\{\tau \in \R\,: \, \frac34 <
\tau < \frac83 \Bigr\}\quad\mbox{and}\quad \forall
 \tau>0\,,\ \sum_{q\in\Z}\varphi(2^{-q}\tau)=1;\\
& \Supp \chi \subset \Bigl\{\tau \in \R\,: \, 0\leq \tau<
\frac43 \Bigr\}\quad\mbox{and}\quad \forall
 \tau\geq 0\,,\ \chi(\tau)+ \sum_{q\geq 0}\varphi(2^{-q}\tau)=1,
\end{align*}
we define the dyadic operators as follows:
for $u\in{\mathcal S}',$
\begin{equation}\label{LP-decom-sum-1}
  \begin{aligned}
&\dot\Delta_qu\eqdefa\varphi(2^{-q}|\textnormal{D}|)u\ \ \forall q\in\Z,\hspace{1cm}\mbox{and}
\hspace{1cm}
\dot S_qu\eqdefa\sum_{j \leq q-1}\dot\Delta_{j}u,
\\
&
\Delta_qu\eqdefa\varphi(2^{-q}|\textnormal{D}|)u\ \mbox{if}\  q\geq 0,\quad
\Delta_{-1}u\eqdefa\chi(|\textnormal{D}|)u\quad\mbox{and}\quad
S_qu\eqdefa\sum_{j=-1}^{ q-1}\Delta_{j}u.
\end{aligned}
\end{equation}
The dyadic operator satisfies the
property of almost orthogonality:
\begin{equation*}
\begin{split}
&\dot\Delta_k\dot\Delta_q u\equiv 0
\quad\mbox{if}\quad\vert k-q\vert\geq 2
\quad\mbox{and}\quad\dot\Delta_k(\dot S_{q-1}u\dot\Delta_q u)
\equiv 0\quad\mbox{if}\quad\vert k-q\vert\geq 5,
\\&
\Delta_k\Delta_q u\equiv 0
\quad\mbox{if}\quad\vert k-q\vert\geq 2
\quad\mbox{and}\quad\Delta_k( S_{q-1}u\Delta_q u)
\equiv 0\quad\mbox{if}\quad\vert k-q\vert\geq 5.
\end{split}
\end{equation*}
\begin{defi}\label{def1.1}
{\sl Let $s\in\R$, $1 \leq p,r\leq +\infty$ and $\overline{\N}\eqdefa \mathbb{N}\cup\{-1\},$ we define
\begin{itemize}
\item[(1)] the inhomogeneous Besov space $B^s_{p,r}$ to be the set of  distributions $u $ in ${\mathcal S}'$ so that
\begin{equation*}
\|u\|_{B^s_{p,r}}\eqdefa\Big\|2^{qs}\|\Delta_q
u\|_{L^{p}}\Big\|_{\ell ^{r}(\overline{\N})}<\infty,
\end{equation*}

\item[(2)] the homogeneous Besov space $\dot
B^s_{p,r}$ to be the set of  distributions $u$ in ${\mathcal S}_{h}'$
 (${\mathcal S}_{h}'\eqdefa\{u\in {\mathcal S}', \ \lim\limits_{\lambda\rightarrow +\infty}\|\theta(\lambda\,D)u\|_{L^\infty}=0\,\,\text{for any}\,\,\theta \in \mathcal{D}(\mathbb{R}^2)\}$) so that
\begin{equation*}
\|u\|_{\dot B^s_{p,r}}\eqdefa\Big\|2^{qs}\|\dot\Delta_q
u\|_{L^{p}}\Big\|_{\ell ^{r}(\mathbb{Z})}<\infty.
\end{equation*}
\end{itemize}
 }
\end{defi}

\begin{rmk}\label{rmk1.1}
\begin{enumerate}
  \item We point out that if $s>0$ then $B^s_{p,r}=\dot B^s_{p,r}\cap L^p$ and
$$
\|u\|_{B^s_{p,r}}\approx \|u\|_{\dot B^s_{p,r}}+\|u\|_{L^p}.
$$

\item If $u \in B^s_{ p,\infty} \cap B^{\tilde{s}}_{
p,\infty}$ and $s < \tilde{s}$, $\theta \in (0, 1)$,  $1 \leq p\leq \infty,$ then $u
\in  B^{\theta s+ (1-\theta)\tilde{s}}_{ p,1}$ and
\begin{equation}\label{interpo-complex-1}
\|u\|_{B^{\theta
s+ (1-\theta)\tilde{s}}_{ p,1}} \leq \frac{C}{\tilde{s}-s}\bigl(\frac{1}{\theta}+\frac{1}{1-\theta}\bigr)\|u\|_{B^s_{ p,\infty}}^{\theta}
\|u\|_{B^{\tilde{s}}_{ p,\infty}}^{1-\theta}.
\end{equation}
  \item Let $s\in \mathbb{R}, 1\le p,\,r\leq+\infty$, and $u \in
\cS'_h.$ Then $u$ belongs to $\dot{B}^{s}_{p, r}$ if and
only if there exists some positive constant $C$ and some nonnegative generic element $\{c_{q, r}\}_{q \in \mathbb{Z}} $ of $\ell^r(\Z)$ such that
$\|\{c_{q, r}\}_{q\in\Z}\|_{\ell^{r}(\Z)} =1$ and for any $q\in \mathbb{Z}$
\begin{equation*} 
\|\dot{\Delta}_{q}u\|_{L^{p}}\leq C c_{q, r} \, 2^{-q s }
\|u\|_{\dot{B}^{s}_{p, r}}.
\end{equation*}
Similarly, for $u \in \cS'$, $u$ belongs to ${B}^{s}_{p, r}$ if and
only if there holds
\begin{equation*} 
\|\Delta_{q}u\|_{L^{p}}\leq C c_{q, r} \, 2^{-q s }
\|u\|_{B^{s}_{p, r}}.
\end{equation*}
\end{enumerate}
\end{rmk}

We also recall Bernstein's inequality from  \cite{BCD}:

\begin{lem}\label{lem2.1}
{\sl Let $\mathcal{B}\eqdefa \{
\xi\in\R^2,\ |\xi|\leq\frac{4}{3}\}$ be a ball   and $\mathcal{C}\eqdefa \{
\xi\in\R^2,\frac{3}{4}\leq|\xi|\leq\frac{8}{3}\}$ a ring.
 A constant $C$ exists so that for any positive real number $\lambda,$ any nonnegative
integer $k,$ any smooth homogeneous function $\sigma$ of degree $m$,
any couple of real numbers $(a, \; b)$ with $ b \geq a \geq 1$, and any function $u$ in $L^a$,
there hold
\begin{equation}
\begin{split}
&\Supp \hat{u} \subset \lambda \mathcal{B} \Rightarrow
\sup_{|\alpha|=k} \|\pa^{\alpha} u\|_{L^{b}} \leq  C^{k+1}
\lambda^{k+ 2(\frac{1}{a}-\frac{1}{b} )}\|u\|_{L^{a}},\\
& \Supp \hat{u} \subset \lambda \mathcal{C} \Rightarrow
C^{-1-k}\lambda^{ k}\|u\|_{L^{a}}\leq
\sup_{|\alpha|=k}\|\partial^{\alpha} u\|_{L^{a}}\leq
C^{1+k}\lambda^{ k}\|u\|_{L^{a}},\\
& \Supp \hat{u} \subset \lambda \mathcal{C} \Rightarrow \|\sigma(D)
u\|_{L^{b}}\leq C_{\sigma, m} \lambda^{ m+2(\frac{1}{a}-\frac{1}{b}
)}\|u\|_{L^{a}}, \end{split}\label{2.1}
\end{equation}}
with $\sigma(D)
u\eqdefa\mathcal{F}^{-1}(\sigma\,\hat{u})$.
\end{lem}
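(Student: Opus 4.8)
The plan is to prove the three estimates by the classical route of localizing $u$ against a fixed Fourier cutoff, applying Young's convolution inequality, and then tracking constants under the scaling $\xi\mapsto\xi/\lambda$. For the first estimate I would first reduce to $\lambda=1$: setting $v\eqdef u(\lambda^{-1}\cdot)$ one has $\Supp\hat v\subset\cB$, and the scaling identities $\pa^\alpha v=\lambda^{-k}(\pa^\alpha u)(\lambda^{-1}\cdot)$, $\|\pa^\alpha v\|_{L^b}=\lambda^{-k+2/b}\|\pa^\alpha u\|_{L^b}$ and $\|v\|_{L^a}=\lambda^{2/a}\|u\|_{L^a}$ convert a bound at $\lambda=1$ into the claimed power $\lambda^{k+2(1/a-1/b)}$. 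At $\lambda=1$ I pick $\phi\in\cD(\R^2)$ equal to $1$ on $\cB$, so that $\hat u=\phi\,\hat u$ and $\pa^\alpha u=(\pa^\alpha g)\ast u$ with $g\eqdef\cF^{-1}\phi$; Young's inequality with $1+\tfrac1b=\tfrac1c+\tfrac1a$ (here $c\geq1$ precisely because $b\geq a$) gives $\|\pa^\alpha u\|_{L^b}\leq\|\pa^\alpha g\|_{L^c}\|u\|_{L^a}$.

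The only delicate point, and the step I expect to be the real obstacle, is establishing the \emph{uniform} geometric bound $\|\pa^\alpha g\|_{L^c}\leq C^{k+1}$ for all $c\in[1,\infty]$ and all $\alpha$ with $|\alpha|=k$; everything else is routine. By $L^1$--$L^\infty$ interpolation it suffices to bound $\|\pa^\alpha g\|_{L^1}$ and $\|\pa^\alpha g\|_{L^\infty}$ by $C^{k+1}$. The $L^\infty$ bound is immediate, since $\|\pa^\alpha g\|_{L^\infty}\leq\|(i\xi)^\alpha\phi\|_{L^1}\leq R^k\|\phi\|_{L^1}$ where $R$ bounds $|\xi|$ on $\Supp\phi$. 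For the $L^1$ bound I would use the weight trick $\|\pa^\alpha g\|_{L^1}\lesssim\sup_{|\beta|\leq3}\|x^\beta\pa^\alpha g\|_{L^\infty}\leq\sup_{|\beta|\leq3}\|\pa_\xi^\beta((i\xi)^\alpha\phi)\|_{L^1}$; by Leibniz, each of the at most three $\xi$-derivatives that falls on $\xi^\alpha$ produces a combinatorial factor $\leq k$ and lowers the power of $\xi$, so the whole expression is $\lesssim k^{3}R^{k}$, which is absorbed into $C^{k+1}$ since polynomial growth is dominated by the exponential. This yields the first estimate.

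The upper bound in the second estimate I would obtain by simply noting that $\cC$ lies in a ball and invoking the first estimate with $a=b=p$. The lower bound I would get by Fourier inversion: the multinomial identity $|\xi|^{2k}=\sum_{|\alpha|=k}\tfrac{k!}{\alpha!}\xi^{2\alpha}$ gives, on $\Supp\hat u$, the representation $u=\sum_{|\alpha|=k}m_\alpha(D)\pa^\alpha u$ with multipliers $m_\alpha(\xi)=\tfrac{k!}{\alpha!}\theta(\xi/\lambda)|\xi|^{-2k}\overline{(i\xi)^\alpha}$, where $\theta\in\cD(\R^2\setminus\{0\})$ equals $1$ on $\cC$. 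Writing $m_\alpha(\xi)=\lambda^{-k}\tilde m_\alpha(\xi/\lambda)$ and bounding $\|\cF^{-1}\tilde m_\alpha\|_{L^1}\leq C^{k+1}$ by the very same weight argument — here $|\xi|^{-2k}$ contributes at most $(4/3)^{2k}$ on $\cC$ and its three $\xi$-derivatives contribute only polynomial-in-$k$ factors, while $\tfrac{k!}{\alpha!}\leq2^{k}$ — I arrive at $\|u\|_{L^p}\leq C^{k+1}\lambda^{-k}\sup_{|\alpha|=k}\|\pa^\alpha u\|_{L^p}$ after summing over the $k+1$ multi-indices, which is the claim.

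Finally, for the third estimate I would localize with the same $\theta$: since $\hat u=\theta(\cdot/\lambda)\hat u$ and $\sigma$ is homogeneous of degree $m$, one has $\widehat{\sigma(D)u}(\xi)=\lambda^{m}(\sigma\theta)(\xi/\lambda)\hat u(\xi)$, where $\sigma\theta\in\cD(\R^2)$ is a \emph{fixed} smooth compactly supported function (smooth across the origin because $\theta$ vanishes near $0$, where $\sigma$ may be singular). Hence $\sigma(D)u=\lambda^{m}\Psi_\lambda\ast u$ with $\Psi\eqdef\cF^{-1}(\sigma\theta)\in\cS$ and $\Psi_\lambda=\lambda^{2}\Psi(\lambda\cdot)$, and Young's inequality with the same exponent $c$ gives $\|\sigma(D)u\|_{L^b}\leq\lambda^{m}\|\Psi_\lambda\|_{L^c}\|u\|_{L^a}=\lambda^{m+2(1/a-1/b)}\|\Psi\|_{L^c}\|u\|_{L^a}$, where $C_{\sigma,m}\eqdef\sup_{c\geq1}\|\Psi\|_{L^c}\leq\max(\|\Psi\|_{L^1},\|\Psi\|_{L^\infty})<\infty$ since $\Psi$ is Schwartz. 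Since this last function $\Psi$ is fixed once $\sigma$ and $\theta$ are chosen, no geometric-in-$k$ constant is needed here, and the estimate follows directly.
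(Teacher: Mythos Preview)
The paper does not prove this lemma; it merely recalls the statement from the reference \cite{BCD}. Your argument is correct and is essentially the classical proof of Bernstein's inequalities as presented in that reference.
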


In what follows, we shall frequently use Bony's
decomposition \cite{Bony} in both  homogeneous and inhomogeneous context. The homogeneous Bony's
decomposition reads
\begin{equation}\label{bony}
uv=T_u v+T'_vu=T_u v+T_v u+R(u,v),
\end{equation}
where
\begin{equation*}
\begin{split}
&T_u v\eqdefa\sum_{q \in \mathbb{Z}}\dot S_{q-1}u\dot\Delta_q v,\,
T'_vu\eqdefa\sum_{q \in \mathbb{Z}}\dot\Delta_q u\,\dot S_{q+2}v,\,
 R(u,v)\eqdefa\sum_{q \in \mathbb{Z}}\dot\Delta_q u {\widetilde{\dot\Delta}}_{q}v \,\,\text{with} \,\, {\widetilde{\dot\Delta}}_{q}v\eqdefa
\sum_{|q'-q|\leq 1}\dot\Delta_{q'}v,
\end{split}
\end{equation*}
and the inhomogeneous Bony's decomposition can be defined in a similar manner.

We shall also use the following law of pra-product.

\begin{prop}[Theorems 2.47 and 2.52 in \cite{BCD}]\label{prop2.2}
{\sl \begin{enumerate}
\item  There exits  a constant $C$ so that for $s \in \mathbb{R}$, $t<0$, $p,\,p_1,\,p_2,\,r,\, r_1,\, r_2\in [1, +\infty]$,
\begin{equation*}
\begin{split}
&\|T_uv\|_{\dot{B}^s_{p, r}} \leq C^{|s|+1}\|u\|_{L^{\infty}}\|v\|_{\dot{B}^s_{p, r}},\\
 &\|T_uv\|_{\dot{B}^{s+t}_{p, r}} \leq \frac{C^{|s+t|+1}}{-t} \|u\|_{\dot{B}^t_{p_1, r_1}}\|v\|_{\dot{B}^s_{p_2, r_2}}\quad \mbox{with}
 \quad \frac{1}{p} \eqdefa \frac{1}{p_1}+\frac{1}{p_2},\quad \frac{1}{r} \eqdefa \min\Bigl(1, \frac{1}{r_1}+\frac{1}{r_2}\Bigr).
    \end{split}
\end{equation*}
\item
Let $(s_1, s_2)$ be in $\mathbb{R}^2$ and $(p_1, p_2, r_1, r_2)$ be in $[1,+\infty]^4$. We assume that
$\frac{1}{p} \eqdefa \frac{1}{p_1}+\frac{1}{p_2}\leq 1$ and $\frac{1}{r} \eqdefa \frac{1}{r_1}+\frac{1}{r_2}\leq 1$.
Then there exits  a constant $C$ so that
\begin{equation*}
\begin{split}
&\|R(u, v)\|_{\dot{B}^{s_1+s_2}_{p, r}} \leq  \frac{C^{s_1+s_2+1}}{s_1+s_2} \|u\|_{\dot{B}^{s_1}_{p_1, r_1}} \|v\|_{\dot{B}^{s_2}_{p_2, r_2}} \quad \text{if}\quad  s_1 + s_2 > 0,\\
&\|R(u, v)\|_{\dot{B}^{0}_{p, \infty}} \leq  C \|u\|_{\dot{B}^{s_1}_{p_1, r_1}} \|v\|_{\dot{B}^{s_2}_{p_2, r_2}} \quad \text{if}\quad r = 1 \,\,\text{and}\,\, s_1 + s_2 = 0.
    \end{split}
\end{equation*}
\end{enumerate}}
\end{prop}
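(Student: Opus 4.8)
The plan is to reduce both inequalities to the spectral localization of the three pieces of Bony's decomposition \eqref{bony} together with the two standard reconstruction lemmas for homogeneous Besov norms, using the characterization of $\dot B^s_{p,r}$ recalled in Remark \ref{rmk1.1}(3) and Bernstein's inequality (Lemma \ref{lem2.1}). The crucial observation is a dichotomy: each block $\dot S_{q-1}u\,\dot\Delta_q v$ of the paraproduct $T_uv$ has Fourier support in a fixed dilated annulus $2^q\widetilde{\mathcal C}$ (since $\dot S_{q-1}u$ is localized in $2^{q-1}\mathcal B$ and $\dot\Delta_q v$ in $2^q\mathcal C$), so one may reconstruct $\|T_uv\|_{\dot B^\sigma_{p,r}}$ from $\bigl\|2^{q\sigma}\|\dot S_{q-1}u\,\dot\Delta_q v\|_{L^p}\bigr\|_{\ell^r}$ for \emph{every} $\sigma\in\R$, at the cost of a constant $C^{|\sigma|+1}$; whereas each block $\dot\Delta_q u\,\widetilde{\dot\Delta}_q v$ of the remainder $R(u,v)$ is only supported in a dilated ball $2^q\mathcal B'$, so reconstruction of $\|R(u,v)\|_{\dot B^\sigma_{p,r}}$ is available only for $\sigma>0$ — this is exactly where the hypothesis $s_1+s_2>0$ enters.

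For the first paraproduct estimate, since $\dot S_{q-1}$ is convolution against an $L^1$ kernel of bounded mass one has $\|\dot S_{q-1}u\|_{L^\infty}\lesssim\|u\|_{L^\infty}$, hence $\|\dot S_{q-1}u\,\dot\Delta_q v\|_{L^p}\lesssim\|u\|_{L^\infty}\,c_{q,r}2^{-qs}\|v\|_{\dot B^s_{p,r}}$; multiplying by $2^{qs}$, taking the $\ell^r$ norm and invoking the annulus reconstruction lemma yields $\|T_uv\|_{\dot B^s_{p,r}}\le C^{|s|+1}\|u\|_{L^\infty}\|v\|_{\dot B^s_{p,r}}$. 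For the second, I would bound $\|\dot S_{q-1}u\|_{L^{p_1}}\le\sum_{j\le q-2}\|\dot\Delta_j u\|_{L^{p_1}}\le\|u\|_{\dot B^t_{p_1,r_1}}\sum_{j\le q-2}c_{j,r_1}2^{-jt}$ and rewrite the last sum as $2^{-qt}\sum_{k\ge2}2^{kt}c_{q-k,r_1}$; since $t<0$ this is the convolution of $(c_{j,r_1})_j\in\ell^{r_1}$ with $(2^{kt}\mathbf 1_{k\ge2})_k$, whose $\ell^1$ mass is $\sum_{k\ge2}2^{kt}\le C/(-t)$, so $2^{qt}\|\dot S_{q-1}u\|_{L^{p_1}}\le\widetilde c_{q,r_1}\,\frac{C}{-t}\|u\|_{\dot B^t_{p_1,r_1}}$ for some normalized $\ell^{r_1}$ sequence. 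Pairing this with $\|\dot\Delta_q v\|_{L^{p_2}}\le c_{q,r_2}2^{-qs}\|v\|_{\dot B^s_{p_2,r_2}}$ via Hölder in $x$ ($\frac1p=\frac1{p_1}+\frac1{p_2}$), multiplying by $2^{q(s+t)}$, taking the $\ell^r$ norm with $\frac1r=\min(1,\frac1{r_1}+\frac1{r_2})$ (Hölder for sequences, plus $\ell^\rho\hookrightarrow\ell^1$ for $\rho\le1$ to absorb the ``$\min$''), and applying the annulus reconstruction lemma gives $\|T_uv\|_{\dot B^{s+t}_{p,r}}\le\frac{C^{|s+t|+1}}{-t}\|u\|_{\dot B^t_{p_1,r_1}}\|v\|_{\dot B^s_{p_2,r_2}}$.

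For the remainder, fix $j$ and note $\dot\Delta_j\bigl(\dot\Delta_q u\,\widetilde{\dot\Delta}_q v\bigr)=0$ unless $q\ge j-N_0$ for a fixed $N_0$; since $\frac1p=\frac1{p_1}+\frac1{p_2}\le1$, Bernstein's inequality gives $\|\dot\Delta_j R(u,v)\|_{L^p}\lesssim\sum_{q\ge j-N_0}\|\dot\Delta_q u\|_{L^{p_1}}\|\widetilde{\dot\Delta}_q v\|_{L^{p_2}}\lesssim\|u\|_{\dot B^{s_1}_{p_1,r_1}}\|v\|_{\dot B^{s_2}_{p_2,r_2}}\sum_{q\ge j-N_0}c_{q,r_1}c_{q,r_2}2^{-q(s_1+s_2)}$. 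Multiplying by $2^{j(s_1+s_2)}$ exhibits this as the convolution of $(c_{q,r_1}c_{q,r_2})_q\in\ell^r$ (Hölder, $\frac1r=\frac1{r_1}+\frac1{r_2}\le1$) with the kernel $(2^{m(s_1+s_2)}\mathbf 1_{m\le N_0})_m$, whose $\ell^1$ mass is $\le\frac{C^{s_1+s_2+1}}{s_1+s_2}$ precisely when $s_1+s_2>0$; taking the $\ell^r$ norm in $j$ and using Young's inequality for sequences gives the claimed bound. In the endpoint case $s_1+s_2=0$, $r=1$ the weight disappears and $\sum_{q\ge j-N_0}c_{q,r_1}c_{q,r_2}\le\|(c_{q,r_1})\|_{\ell^{r_1}}\|(c_{q,r_2})\|_{\ell^{r_2}}\le1$ uniformly in $j$ (Hölder with $\frac1{r_1}+\frac1{r_2}=1$), which is exactly $\|R(u,v)\|_{\dot B^0_{p,\infty}}\le C\|u\|_{\dot B^{s_1}_{p_1,r_1}}\|v\|_{\dot B^{s_2}_{p_2,r_2}}$.

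I expect the genuinely delicate points to be, first, the \emph{sharp} extraction of the blow-up constants $\tfrac1{-t}$ and $\tfrac1{s_1+s_2}$ from the geometric sums $\sum 2^{kt}$ and $\sum 2^{m(s_1+s_2)}$ (rather than mere finiteness), and second, the structural fact that the low--high block and the high--high block require different reconstruction lemmas, so that $R(u,v)$ forces strictly positive total regularity (or the $\dot B^0_{p,\infty}$, $r=1$ substitute); one must also check that the relevant series converge in $\mathcal S'_h$ so that the formal frequency manipulations are legitimate.
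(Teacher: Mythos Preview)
The paper does not supply a proof of this proposition at all: it is stated as a quotation of Theorems 2.47 and 2.52 in \cite{BCD}, and is used as a black box throughout Sections~\ref{sect-prel}--\ref{sect-global}. Your argument is correct and is precisely the standard proof one finds in \cite{BCD}: exploiting the annulus localization of each block of $T_uv$ to reconstruct the Besov norm for arbitrary regularity, the ball localization of the blocks of $R(u,v)$ to reconstruct only for positive total regularity, and extracting the sharp constants $\tfrac{1}{-t}$ and $\tfrac{1}{s_1+s_2}$ from the geometric tails via Young's inequality on $\ell^1\ast\ell^{r}$. There is nothing to compare here beyond noting that you have reproduced the textbook proof that the paper cites.
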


In  order to obtain a better description of the regularizing effect
of the transport-diffusion equation, we shall use Chemin-Lerner type
norm from
\cite{CL}.
\begin{defi}\label{def2.2}
{\sl Let $s\in\R$,
$r,\lambda, p\in [1,+\infty]$ and $T>0$.
 we define
\begin{equation*}
\|u\|_{\widetilde L^\lambda_T(B^s_{p,r})}\eqdefa\Big\|2^{qs}\|\Delta_q
u\|_{L^\lambda_T(L^{p})}\Big\|_{\ell ^{r}(\overline{\mathbb{N}})}
\quad\mbox{and}\quad
\|u\|_{\widetilde L^\lambda_T(\dot B^s_{p,r})}\eqdefa\Big\|2^{qs}\|\dot\Delta_q
u\|_{L^\lambda_T(L^{p})}\Big\|_{\ell ^{r}(\mathbb{Z})}.
\end{equation*}
}
\end{defi}

Finally we recall  the following commutator's estimate which will be frequently used throughout this paper.

\begin{lem}[Lemma 1 in \cite{Plan}, Lemma 2.97 in \cite{BCD}](Commutator estimate)\label{lem-commutator-1} Let $(p, s, r) \in [1, +\infty]^3$
satisfy $\f1{r}=\f1p+\f1s,$ $\theta$ be a $C^1$ function on $\mathbb{R}^{d}$ such that $(1+|\cdot|) \hat{\theta} \in L^1$. There
exists a constant $C$ such that for any function a with gradient in $L^p$
and any function $b$ in $L^s$, we have, for any positive $\lambda$,
\begin{equation}\label{commutator-compact-0}
\|[\theta(\lambda^{-1} D), a]b\|_{L^r} \leq C \lambda^{-1}\|\grad a\|_{L^p}\|b\|_{L^s}.
\end{equation}
\end{lem}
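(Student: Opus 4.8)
The final statement to prove is Lemma~\ref{lem-commutator-1}, the commutator estimate
$\|[\theta(\lambda^{-1}D),a]b\|_{L^r}\le C\lambda^{-1}\|\nabla a\|_{L^p}\|b\|_{L^s}$
under $\frac1r=\frac1p+\frac1s$ and the hypothesis that $\theta$ is $C^1$ with $(1+|\cdot|)\hat\theta\in L^1$. The plan is to write the operator as a convolution and exploit the mean value theorem to extract the gradient of $a$.

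\textbf{Step 1: Reduce to $\lambda=1$ by scaling.} I would first observe that if the estimate holds for $\lambda=1$, then applying it to $a_\lambda(x)=a(\lambda^{-1}x)$, $b_\lambda(x)=b(\lambda^{-1}x)$ and tracking the scaling of the $L^p$, $L^s$, $L^r$ norms together with the factor from $\nabla a_\lambda$ yields the general case; this is where the single power $\lambda^{-1}$ comes out, consistent with $\frac1r=\frac1p+\frac1s$ making the Lebesgue-norm scalings cancel. So it suffices to bound $\|[\theta(D),a]b\|_{L^r}$.

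\textbf{Step 2: Write the commutator as an integral operator.} Let $h=\mathcal F^{-1}\hat\theta$, so that $\theta(D)f = h\ast f$. Then
\[
[\theta(D),a]b(x)=\int_{\R^d} h(x-y)\bigl(a(y)-a(x)\bigr)b(y)\,dy.
\]
Using the fundamental theorem of calculus, $a(y)-a(x)=\int_0^1 (y-x)\cdot\nabla a\bigl(x+\tau(y-x)\bigr)\,d\tau$, so
\[
[\theta(D),a]b(x)=-\int_0^1\!\!\int_{\R^d} (x-y)\,h(x-y)\cdot\nabla a\bigl(x+\tau(y-x)\bigr)\,b(y)\,dy\,d\tau .
\]
The kernel $z\mapsto z\,h(z)$ is in $L^1$ precisely because $(1+|\cdot|)\hat\theta\in L^1$ (indeed $\widehat{zh}$ is, up to constants, $\nabla\hat\theta$, but more robustly one uses that $|z||h(z)|\lesssim$ an integrable function when $\hat\theta$ and its relevant decay are controlled — the clean statement is that the hypothesis forces $zh(z)\in L^1$).

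\textbf{Step 3: Estimate by Minkowski and a convolution/Young argument.} Take $L^r$ norms in $x$, move the $L^r$ norm inside the $d\tau$ and $dy$ integrals by Minkowski's inequality, and for fixed $\tau$ and fixed shift $z=x-y$ estimate
\[
\Bigl\|\nabla a\bigl(x-\tau z\bigr)\,b(x-z)\Bigr\|_{L^r_x}\le \|\nabla a\|_{L^p}\,\|b\|_{L^s}
\]
by Hölder with $\frac1r=\frac1p+\frac1s$ (translation invariance of Lebesgue norms makes the two shifts harmless). What remains is $\int_0^1\int |z||h(z)|\,dz\,d\tau=\|\,|\cdot|h\,\|_{L^1}=:C_\theta<\infty$, giving the bound with $C=C_\theta$. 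Combining with Step 1 finishes the proof.

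\textbf{Main obstacle.} The only genuinely delicate point is Step 3's interchange of norms together with the Hölder split: one must be careful that after writing the double integral, the inner object to be estimated in $L^r_x$ is a pointwise product of a translate of $\nabla a$ and a translate of $b$ — not a convolution in the usual sense — so it is Hölder (not Young) that is applied at fixed $z,\tau$, and only then is the $dz$ integral absolutely summed against the $L^1$ weight $|z|h(z)$. Verifying $|\cdot|\,h\in L^1$ from $(1+|\cdot|)\hat\theta\in L^1$ (via $\widehat{x_jh}=i\partial_{\xi_j}\hat\theta$ when $\hat\theta$ is $C^1$, or by the standard fact that an integrable-with-weight Fourier transform controls decay) is routine but should be stated. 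Everything else is bookkeeping.
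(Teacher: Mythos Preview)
The paper does not supply its own proof of this lemma; it is quoted verbatim from the literature (Lemma~1 of \cite{Plan} and Lemma~2.97 of \cite{BCD}), so there is nothing in the paper to compare against. Your argument is the standard one found in those references and is correct.

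One small point deserves cleaning up. In your ``main obstacle'' paragraph you suggest verifying $|\cdot|\,h\in L^1$ via $\widehat{x_jh}=i\partial_{\xi_j}\hat\theta$; this is not the right route and mixes up which side the Fourier transform sits on. The kernel of $\theta(D)$ is $h=\mathcal F^{-1}\theta$, and with the usual conventions $h(x)=(2\pi)^{-d}\hat\theta(-x)$. Hence the hypothesis $(1+|\cdot|)\hat\theta\in L^1$ is \emph{exactly} the statement that $(1+|\cdot|)h\in L^1$, up to a multiplicative constant and a reflection; there is nothing further to prove. The $C^1$ assumption on $\theta$ is there only to make sense of $\theta$ as a classical function and plays no role in the estimate itself. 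Also, your Step~1 scaling reduction is correct but unnecessary: writing the kernel of $\theta(\lambda^{-1}D)$ as $\lambda^d h(\lambda\cdot)$ and running Steps~2--3 directly produces the factor $\lambda^{-1}$ after the change of variable $w=\lambda z$ in $\int \lambda^d|\lambda z|\,|h(\lambda z)|\,dz$.
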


\subsection{Some useful estimates}

In this subsection, we shall apply the basic facts in the previous subsection to study some
estimates, which will be used in the subsequent sections. We first present the following commutator's estimate, the proof of which is given  for the sake of completeness.

\begin{lem}\label{lem2.3}
{\sl Let    $ p\in [2,\infty[$ and $u$ be a solenoidal vector field with $\nabla u\in
L^2.$ Then  there holds
\begin{align}\label{2.11}
\sum_{q\in\mathbb{Z}}2^{q\left(-1+\frac{2}{p}\right)}\|[\dot\Delta_{q}, u\cdot \nabla ]u\|_{L^p}
\lesssim
\|\nabla u\|_{L^{2}}^2.
\end{align}
}
\end{lem}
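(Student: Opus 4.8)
The plan is to estimate the commutator $[\dot\Delta_q, u\cdot\nabla]u$ by inserting Bony's decomposition \eqref{bony} for the product $u\cdot\nabla u$, treating the paraproduct and remainder terms separately, and using the divergence-free condition to rewrite $u\cdot\nabla u=\dive(u\otimes u)$ where convenient. First I would write $u\cdot\nabla u = \sum_k (\dot S_{k-1}u\cdot\nabla\dot\Delta_k u) + \sum_k (\dot\Delta_k u\cdot\nabla\dot S_{k+2}u) + \sum_k (\dot\Delta_k u\cdot\nabla\widetilde{\dot\Delta}_k u)$, and then apply $\dot\Delta_q$. The commutator structure kills the "diagonal" piece of the first (low-high paraproduct) term: for $T_u(\nabla u)$ the standard trick is that $[\dot\Delta_q, \dot S_{k-1}u\cdot\nabla]\dot\Delta_k u$ is nonzero only for $|k-q|\le 4$ (say), and on each such term one applies the commutator estimate of Lemma \ref{lem-commutator-1} with $\theta=\varphi$, $\lambda=2^q$, $a=\dot S_{k-1}u$, $b=\nabla\dot\Delta_k u$; choosing the exponent split $\frac1p=\frac12+(\frac1p-\frac12)$ gives $\|[\dot\Delta_q,\dot S_{k-1}u\cdot\nabla]\dot\Delta_k u\|_{L^p}\lesssim 2^{-q}\|\nabla\dot S_{k-1}u\|_{L^2}\|\nabla\dot\Delta_k u\|_{L^{2p/(2-p)}}$. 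Wait—that exponent is wrong for $p>2$; the cleaner route is to keep both factors in an $L^2$-type norm by using Bernstein to convert $L^2$ derivatives into the needed $L^p$ control, i.e. bound $\|\dot\Delta_k u\|_{L^{p^*}}$ with $\frac1{p^*}=\frac1p-\frac12$ via Bernstein from $\|\nabla\dot\Delta_k u\|_{L^2}$, losing a factor $2^{k(\frac2p)}$; combined with the $2^{-q}$ and the spectral localization $|k-q|\le 4$, the total weight matches $2^{q(-1+\frac2p)}$ and one is left summing $\|\nabla\dot S_{k-1}u\|_{L^2}\|\nabla\dot\Delta_k u\|_{L^2}$, which is $\le\|\nabla u\|_{L^2}\cdot\|\nabla u\|_{L^2}$ after summing in $k$ (the low-frequency factor is uniformly bounded by $\|\nabla u\|_{L^2}$, and $\sum_k\|\nabla\dot\Delta_k u\|_{L^2}^2\le\|\nabla u\|_{L^2}^2$ is not quite $\ell^1$—so one instead sums $2^{q(-1+\frac2p)}\|\cdot\|_{L^p}$ over $q$ directly, exchanging the $q$-sum and $k$-sum, which converges since for fixed $k$ only $O(1)$ values of $q$ contribute).

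For the high-low term $T'_{\nabla u}u$ and the remainder $R(u,\nabla u)$ there is no commutator gain to exploit, but these terms are not genuinely commutators anyway: $[\dot\Delta_q, u\cdot\nabla]u = \dot\Delta_q(u\cdot\nabla u) - u\cdot\nabla\dot\Delta_q u$, and the non-paraproduct pieces of $\dot\Delta_q(u\cdot\nabla u)$ together with the corresponding pieces of $u\cdot\nabla\dot\Delta_q u$ (which is $T_u\nabla\dot\Delta_q u + T_{\nabla\dot\Delta_q u}u + R(u,\nabla\dot\Delta_q u)$, essentially supported near frequency $2^q$) must each be estimated by brute force. Here I would use the divergence-free condition to write the high-high interactions as $\dive$ applied to a product: $\dot\Delta_q R(u,\nabla u) = \dot\Delta_q\,\dive\,\widetilde R(u,u)$ up to harmless terms, so that $\|\dot\Delta_q\,\dive\,R(u\otimes u)\|_{L^p}\lesssim 2^q\|R(u\otimes u)\|_{L^p}$, and then apply Proposition \ref{prop2.2}(2) with $s_1=s_2=\frac2p$ (so $s_1+s_2=\frac4p>0$ since $p<\infty$... but one needs $s_1+s_2>0$, fine) in the space $\dot B^{\frac4p}_{p/2,1}$ or similar, controlling $\|u\|_{\dot B^{\frac2p}_{p,\cdot}}$ by $\|\nabla u\|_{L^2}$ via the embedding $\dot H^1(\R^2)\hookrightarrow \dot B^{\frac2p}_{p,?}$—precisely $\dot\Delta_k u$ satisfies $\|\dot\Delta_k u\|_{L^p}\lesssim 2^{-k(\frac2p-1)}\cdot 2^{-k}\|\nabla\dot\Delta_k u\|_{L^2}\cdot 2^{k(1-\frac2p)}$... i.e. $2^{k\frac2p}\|\dot\Delta_k u\|_{L^p}\lesssim 2^{k}\|\dot\Delta_k u\|_{L^2}\sim\|\nabla\dot\Delta_k u\|_{L^2}$, giving $\|u\|_{\dot B^{\frac2p}_{p,2}}\lesssim\|\nabla u\|_{L^2}$. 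So every product estimate reduces, after this embedding, to a bilinear bound quadratic in $\|\nabla u\|_{L^2}$.

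The main obstacle I anticipate is bookkeeping at the endpoint $p=\infty$ (excluded here, so $\frac2p>0$ and all the $\frac{1}{s_1+s_2}$ constants in Proposition \ref{prop2.2} are finite) and, more substantively, making the $\ell^1(\Z)$ summation in $q$ genuinely converge: one does not have $\sum_k\|\nabla\dot\Delta_k u\|_{L^2}^2$ being summable with a weight, so the argument must be organized so that the summation over $q$ is performed last and uses only the almost-orthogonality $|k-q|\lesssim 1$ (for the high-high and low-high pieces) to collapse the double sum to a single sum of the quadratic quantity $\sum_k\|\nabla\dot\Delta_k u\|_{L^2}\,\|\nabla\dot S_{\sim k}u\|_{L^2}$, which telescopes/bounds by $\|\nabla u\|_{L^2}^2$ by Cauchy–Schwarz in $k$ after pairing each dyadic block with a low-frequency sum. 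Concretely, I would set up, for each of the three Bony pieces, an inequality of the schematic form $2^{q(-1+\frac2p)}\|(\text{piece})_q\|_{L^p}\lesssim \sum_{|k-q|\le N} d_k c_k \|\nabla u\|_{L^2}^2$ with $(d_k)\in\ell^1$, $(c_k)\in\ell^2$ generic sequences (using Remark \ref{rmk1.1}(3) and the already-established fact that $\nabla u\in\dot B^0_{2,2}$ with unit-normalized coefficients $c_k$, while the low-frequency "$\dot S$" factor contributes the $\ell^1$-summable $d_k$ since $\dot S_{k}(\nabla u)$ is bounded in $L^2$ uniformly and the geometric tail is summable); summing in $q$ then yields $\|\nabla u\|_{L^2}^2$ times $\|(d_k)\|_{\ell^1}\|(c_k)\|_{\ell^\infty}\lesssim\|\nabla u\|_{L^2}^2$. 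This last reindexing—converting "square-summable in $k$" into "summable in $q$"—is the only genuinely delicate point; everything else is a routine application of Bernstein (Lemma \ref{lem2.1}), the commutator estimate (Lemma \ref{lem-commutator-1}), and the paraproduct/remainder laws (Proposition \ref{prop2.2}).
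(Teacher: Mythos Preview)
Your approach is essentially the same as the paper's: the four-piece Bony decomposition of $[\dot\Delta_q, u\cdot\nabla]u$ into the commutator piece $[\dot\Delta_q, T_{u^j}]\partial_j u$, the two high--low pieces $\dot\Delta_q(T_{\partial_j u}u^j)$ and $-T'_{\dot\Delta_q\partial_j u}u^j$, and the remainder $\dot\Delta_q\partial_j R(u^j,u)$, with Lemma~\ref{lem-commutator-1} applied to the first and direct dyadic estimates (via Bernstein and the embedding $\dot H^1(\R^2)\hookrightarrow \dot B^{2/p}_{p,2}$) on the others.

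The one place your write-up is off is the summability mechanism. You claim the low-frequency $\dot S_{k-1}$ factor contributes an $\ell^1$ sequence $d_k$, but it does not: one only has $\|\dot S_{k-1}\nabla u\|_{L^\infty}\lesssim c_k 2^k\|\nabla u\|_{L^2}$ with $c_k\in\ell^2$ (Young's inequality applied to $\sum_{\ell\le k-2}2^{\ell-k}c_\ell$ with $c_\ell\in\ell^2$ stays in $\ell^2$, no better). The mechanism the paper uses---and the one you should use---is that \emph{each} factor in the bilinear estimate contributes a generic $\ell^2$ sequence $c_q$, so the product behaves like $c_q^2\in\ell^1$; explicitly each piece satisfies $\|\mathcal{R}^i_q\|_{L^p}\lesssim c_q^2\,2^{q(1-\frac2p)}\|\nabla u\|_{L^2}^2$, and then
\[
\sum_{q\in\Z}2^{q(-1+\frac2p)}\cdot c_q^2\,2^{q(1-\frac2p)}\|\nabla u\|_{L^2}^2
=\|\nabla u\|_{L^2}^2\sum_{q\in\Z}c_q^2=\|\nabla u\|_{L^2}^2.
\]
This is exactly the ``Cauchy--Schwarz in $k$'' you allude to earlier, just organized frequency by frequency: since $u$ appears on both sides and $\nabla u\in L^2=\dot B^0_{2,2}$, the natural summability index for each factor is $2$, and the $\ell^1(\Z)$ target in $q$ comes from the product $c_q\cdot c_q$. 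Once you fix this bookkeeping your proof goes through and matches the paper's.
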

\begin{proof}
Thanks to Bony's decomposition \eqref{bony} and the fact that $\dive\,u=0$, we decompose $[\dot\Delta_q,u\cdot\nabla] u$ into the following four terms:
\begin{align}\label{comma-1}
[\dot\Delta_q,u\cdot\nabla] u&=\dot\Delta_q\bigl(\partial_j
R(u^j, u))+\dot\Delta_q\bigl(T_{\partial_j u}u^j\bigr)
-T'_{\dot\Delta_q\partial_j u}u^j+[\dot\Delta_q, T_{u^j}]\partial_j u \eqdefa
\sum_{i=1}^4 \mathcal{R}^i_q,
\end{align}
where repeated indices means the summation of the index from $1$ to $2.$

We first deduce form  Lemma \ref{lem2.1} that
\begin{align*}
\|\mathcal{R}^1_q\|_{L^p}\lesssim & 2^{q\left(3-\f2p\right)}\sum_{k\geq q-3} \|\widetilde{\dot\Delta}_ku\|_{L^2}\|\dot\Delta_ku^j \|_{L^2}\\
\lesssim & 2^{q\left(3-\f2p\right)}\sum_{k\geq q-3} c_k^22^{-2k}\|\na u\|_{L^2}^2
\lesssim  c_q^22^{q\left(1-\f2p\right)}\|\na u\|_{L^2}^2.
\end{align*}
Here and in all that follows, we always denote $\{c_q\}_{q\in\Z}$ to be a unit generic element of $\ell^2(\Z)$ so that $\sum_{q\in\Z}c_q^2=1.$

While considering the support properties to the Fourier transform of the terms in $
T_{\partial_j u}u^j,$ we infer
\beno
\|\mathcal{R}^2_q\|_{L^p}\lesssim \sum_{\vert
q-k\vert\leq 4} \|\dot{S}_{k-1} \na u\|_{L^\infty}\|\dot\Delta_ku\|_{L^p},
\eeno
yet it follows from Lemma \ref{lem2.1} that
\beq \label{S2eq1}
\|\dot{S}_{k-1} \na u\|_{L^\infty}\lesssim \sum_{\ell\leq k-2} 2^{\ell}\|\dot{\Delta}_\ell \na
u\|_{L^2}\lesssim c_k 2^k\|\na u\|_{L^2},
\eeq
so that we infer
\begin{align*}
\|\mathcal{R}^2_q\|_{L^p}\lesssim &\sum_{\vert
q-k\vert\leq 4}c_k 2^{2k\left(1-\f1p\right)}\|\na u\|_{L^2}\|\dot\Delta_ku\|_{L^2}\\
\lesssim &\sum_{\vert
q-k\vert\leq 4}c_k^22^{k\left(1-\f2p\right)}\|\na u\|_{L^2}^2
\lesssim  c_q^22^{q\left(1-\f2p\right)}\|\na u\|_{L^2}^2.
\end{align*}

Notice that $\mathcal{R}^3_q=-\sum\limits_{k\geq
q-3}\dot S_{k+2}\dot\Delta_q\partial_j u\dot\Delta_k u^j$, one has
\begin{align*}
\Vert \mathcal{R}^3_q\Vert_{L^p}
&\lesssim \|\dot\Delta_q\na u\|_{L^\infty}\sum_{k\geq
q-3} \|\dot\Delta_k u\|_{L^p}\\
&\lesssim 2^q
\Vert\dot\Delta_q \nabla u\Vert_{L^2} \sum_{k\geq q-3}c_k2^{-\frac{2}{p}k}
\Vert\na u\Vert_{L^2}\lesssim c_q^22^{q\left(1-\f2p\right)}\|\nabla u\|_{L^2}^2.
\end{align*}

For the last term  $\mathcal{R}^4_q$ in \eqref{comma-1}, we use the property of spectral localization of the Littlewood-Paley
decomposition to write $\mathcal{R}^4_q= \sum_{\vert
k-q\vert\leq 4}[\dot\Delta_{q},\dot{S}_{k-1}u^j]\dot\Delta_k\partial_j u$, from which, Lemma \ref{lem-commutator-1} and \eqref{S2eq1}, we infer
$$
\begin{aligned}
\|\mathcal{R}^4_q\|_{L^p}
&\lesssim
\sum_{|k-q|\leq4}2^{k-q}\|\dot{S}_{k-1}\nabla u\|_{L^\infty}\|\dot\Delta_{k}u\|_{L^p}\\
&\lesssim 2^{-q}
\sum_{|k-q|\leq4}c_k^2 2^{2k\left(1-\f1p\right)}\|\na u\|_{L^2}^2\lesssim
c_q^22^{q\left(1-\f2p\right)}\|\nabla u\|_{L^2}^2.
\end{aligned}
$$

By summarizing the above estimates, we arrive at \eqref{2.11}, which ends the proof of Lemma \ref{lem2.3}.
\end{proof}



\begin{lem}\label{lem2.5}
{\sl Let  $p\in [2,\infty[,$ $\lambda\in [1,\infty[,$ $a\in
\dot{B}_{\lambda,2}^{\frac{2}{\lambda}}$ and $f\in L^2.$
Then there holds
\begin{equation}\label{2.26}
\begin{split}
\sum_{q\in\mathbb{Z}}2^{q\bigl(-1+\f2p\bigr)}\|[\dot{\Delta}_{q},a]f\|_{L^p}\lesssim
\|a\|_{\dot{B}_{\lambda,2}^{\frac{2}{\lambda}}}
\|f\|_{L^2}.
\end{split}
\end{equation}
}
\end{lem}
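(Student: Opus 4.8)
The plan is to mimic the structure of the proof of Lemma~\ref{lem2.3}: apply Bony's decomposition to $[\dot\Delta_q,a]f$, split into paraproduct and remainder pieces, and estimate each piece using Bernstein's inequality (Lemma~\ref{lem2.1}), the commutator estimate (Lemma~\ref{lem-commutator-1}), and the characterization of Besov norms via generic sequences (Remark~\ref{rmk1.1}(3)). Write $af = T_a f + T_f a + R(a,f)$, so that
\begin{equation*}
[\dot\Delta_q,a]f = [\dot\Delta_q,T_a]f + \dot\Delta_q(T_f a) - T_{\dot\Delta_q f}\,a + \dot\Delta_q R(a,f) - R(\dot\Delta_q a, f) \eqdefa \sum_{i=1}^{5}\mathcal{R}^i_q .
\end{equation*}
The key point throughout is the Hölder relation: since $\frac12<\frac1p+\frac1\lambda\le1$, setting $\frac1m\eqdefa\frac1p+\frac1\lambda-\frac12\in(0,\frac12]$ one has $m\in[2,\infty)$, and the product of an $L^\lambda$ factor (from $a$) with an $L^2$ factor (from $f$) lands in $L^m\hookrightarrow L^p$ after the frequency localization costs a factor $2^{q\cdot 2(\frac1m-\frac1p)}=2^{q(\frac2\lambda-1)}$, which is exactly the power needed to cancel the $2^{q(-1+\frac2p)}$ weight against the $a$-regularity exponent $\frac2\lambda$.

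For the commutator term $\mathcal{R}^1_q=[\dot\Delta_q,T_a]f=\sum_{|k-q|\le4}[\dot\Delta_q,\dot S_{k-1}a]\dot\Delta_k f$, I would use Lemma~\ref{lem-commutator-1} with $r$ chosen so that $\frac1r=\frac1\lambda+\frac12$, giving $\|\mathcal{R}^1_q\|_{L^r}\lesssim 2^{-q}\sum_{|k-q|\le4}\|\nabla\dot S_{k-1}a\|_{L^\lambda}\|\dot\Delta_k f\|_{L^2}$; then Bernstein turns the $L^r$ bound into an $L^p$ bound at the cost $2^{2q(\frac1r-\frac1p)}=2^{q(\frac2\lambda+1-\frac2p)}$, and $\|\nabla\dot S_{k-1}a\|_{L^\lambda}\lesssim\sum_{\ell\le k-2}2^\ell\|\dot\Delta_\ell a\|_{L^\lambda}\lesssim c_k 2^{k(1-\frac2\lambda)}\|a\|_{\dot B^{2/\lambda}_{\lambda,2}}$ by the embedding/summation trick used in \eqref{S2eq1}. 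Collecting the powers of $2$ yields $c_q^2\, 2^{q(1-\frac2p)}\|a\|_{\dot B^{2/\lambda}_{\lambda,2}}\|f\|_{L^2}$ after summing the geometric series and using Cauchy–Schwarz on the two $\ell^2$ sequences. For $\mathcal{R}^2_q=\dot\Delta_q(T_f a)=\sum_{|k-q|\le4}\dot\Delta_q(\dot S_{k-1}f\,\dot\Delta_k a)$, Bernstein (the $L^2\to L^\infty$ version, costing $2^k$) bounds $\|\dot S_{k-1}f\|_{L^\infty}\lesssim\sum_{\ell\le k-2}2^\ell\|\dot\Delta_\ell f\|_{L^2}$; note $T_f$ here is not automatically bounded so one must use $f\in L^2$ via Bernstein, and $\|\dot\Delta_q(\dot S_{k-1}f\dot\Delta_k a)\|_{L^p}\lesssim\|\dot S_{k-1}f\|_{L^\infty}\|\dot\Delta_k a\|_{L^p}$ combined with $\|\dot\Delta_k a\|_{L^p}\lesssim 2^{2k(\frac1\lambda-\frac1p)}\|\dot\Delta_k a\|_{L^\lambda}$ again balances to the right power. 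The term $\mathcal{R}^3_q=T_{\dot\Delta_q f}\,a=\sum_{k\ge q-3}\dot S_{k-1}\dot\Delta_q f\,\dot\Delta_k a$ is handled similarly: $\|\dot S_{k-1}\dot\Delta_q f\|_{L^\infty}\lesssim 2^q\|\dot\Delta_q f\|_{L^2}$ (Bernstein, crudely), then sum over $k\ge q-3$ against $\|\dot\Delta_k a\|_{L^p}\lesssim c_k 2^{-k\frac2\lambda}2^{2k(\frac1\lambda-\frac1p)}\|a\|_{\dot B^{2/\lambda}_{\lambda,2}}=c_k 2^{-\frac{2k}{p}}\|a\|_{\dot B^{2/\lambda}_{\lambda,2}}$, and the resulting $\sum_{k\ge q-3}c_k 2^{-\frac2p k}$ is a convolution producing a new $\ell^2$ sequence times $2^{-\frac2p q}$.

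For the remainder terms $\mathcal{R}^4_q=\dot\Delta_q R(a,f)$ and $\mathcal{R}^5_q=R(\dot\Delta_q a,f)$, I would use $\mathcal{R}^4_q=\dot\Delta_q\sum_{k\ge q-N_0}\dot\Delta_k a\,\widetilde{\dot\Delta}_k f$ and Bernstein with the $L^m\to L^p$ gain $2^{q\cdot 2(\frac1m-\frac1p)}=2^{q(\frac2\lambda-1)}$ (legitimate since $m\ge p$ by $\frac1p+\frac1\lambda\le1$... more precisely $\frac1m=\frac1p+\frac1\lambda-\frac12\le\frac1p$), giving $\|\mathcal{R}^4_q\|_{L^p}\lesssim 2^{q(\frac2\lambda-1)}\sum_{k\ge q-N_0}\|\dot\Delta_k a\|_{L^\lambda}\|\widetilde{\dot\Delta}_k f\|_{L^2}\lesssim 2^{q(\frac2\lambda-1)}\sum_{k\ge q-N_0}c_k 2^{-\frac{2k}\lambda}\|a\|_{\dot B^{2/\lambda}_{\lambda,2}}c_k\|f\|_{L^2}$; the condition $\frac1p+\frac1\lambda>\frac12$ guarantees $\frac2\lambda-1>-\frac2p$, i.e. the exponent $\frac2\lambda$ appearing in $2^{-\frac{2k}\lambda}$ together with the prefactor makes $\sum_{k\ge q-N_0}2^{(\frac2\lambda-1-\frac2\lambda)k}\cdot(\dots)$ — wait, one must track it as $\sum_k c_k^2 2^{(\frac2\lambda-1)(q-k)}2^{(1-\frac2p)q}$-type, convergent precisely because $\frac2\lambda-1<0$ forces summability for $k\ge q$; if $\lambda\le2$ this direction is fine, and the boundary $\frac2\lambda-1=0$ ($\lambda=2$) still works since then $R$ maps into $\dot B^0_{p,\infty}$ with the $r=1$ endpoint of Proposition~\ref{prop2.2}. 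The term $\mathcal{R}^5_q$ is the easiest: only $k\ge q-N_0$ contribute and $\|\dot\Delta_q a\|_{L^\lambda}$ carries a single fixed $c_q$. I expect the \textbf{main obstacle} to be the bookkeeping at the endpoint cases $\frac1p+\frac1\lambda=1$ (where $m=2$ and $L^m\hookrightarrow L^p$ fails unless $p=2$, so one needs $p=2=m$ there, or argues differently) and the strict inequality $\frac1p+\frac1\lambda>\frac12$ (needed for the geometric series in the high-frequency remainder to converge and for the $R(a,f)$ estimate in Proposition~\ref{prop2.2} to apply with $s_1+s_2=\frac2\lambda-1+0>-\frac2p$... actually the relevant positivity is of the \emph{output} exponent, which ties back to $\frac2\lambda>1-\frac2p$, i.e. exactly $\frac1p+\frac1\lambda>\frac12$): these two constraints are what make the lemma's hypotheses sharp, and verifying that every one of the five pieces respects them — especially ensuring the Hölder exponents never exceed $1$ in sum and the Besov remainder exponent stays in the admissible range of Proposition~\ref{prop2.2} — is where care is required. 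Summing the five contributions and invoking Remark~\ref{rmk1.1}(3) then gives \eqref{2.26}.
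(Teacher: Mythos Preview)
Your overall strategy---Bony decomposition followed by piece-by-piece estimates via Bernstein and the commutator lemma---is exactly what the paper does (the paper groups $T_{\dot\Delta_q f}a+R(a,\dot\Delta_q f)$ into $T'_{\dot\Delta_q f}a$, giving four terms instead of five; incidentally your $\mathcal R^5_q$ should read $R(a,\dot\Delta_q f)$, not $R(\dot\Delta_q a,f)$). However, the H\"older/Bernstein bookkeeping in your sketch contains genuine errors that make the argument fail for part of the parameter range.

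Your ``key point'' is off: by H\"older the product of an $L^\lambda$ factor and an $L^2$ factor lands in $L^r$ with $\frac1r=\frac1\lambda+\frac12$, \emph{not} in $L^m$ with $\frac1m=\frac1p+\frac1\lambda-\frac12$; and Bernstein goes from lower to higher integrability, so ``$L^m\to L^p$ legitimate since $m\ge p$'' is backwards. More concretely, in $\mathcal R^1_q$ your choice $\frac1r=\frac1\lambda+\frac12$ gives $r<1$ whenever $\lambda<2$, and your bound $\|\nabla\dot S_{k-1}a\|_{L^\lambda}\lesssim c_k2^{k(1-\frac2\lambda)}\|a\|_{\dot B^{2/\lambda}_{\lambda,2}}$ fails for $\lambda\le2$ since $\sum_{\ell\le k-2}c_\ell 2^{\ell(1-\frac2\lambda)}$ diverges as $\ell\to-\infty$; likewise the $\mathcal R^3_q$ step $\|\dot\Delta_k a\|_{L^p}\lesssim 2^{2k(\frac1\lambda-\frac1p)}\|\dot\Delta_k a\|_{L^\lambda}$ needs $\lambda\le p$. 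The paper avoids all of this by splitting each term into the cases $\lambda\le p$ and $p<\lambda$: when $\lambda\le p$ one invokes the embedding $\dot B^{2/\lambda}_{\lambda,2}\hookrightarrow\dot B^{2/p}_{p,2}$, puts the $a$-blocks in $L^p$, and uses $\|\nabla\dot S_{k-1}a\|_{L^\infty}$ in the commutator; when $p<\lambda$ (hence $\lambda>2$) your $L^\lambda$-based pairings become legitimate. Note also that the lemma as stated carries no constraint $\frac12<\frac1p+\frac1\lambda\le1$, and the paper's proof indeed does not use one---your appeal to that inequality for the remainder convergence is unnecessary here (it is needed in Lemma~\ref{lem2.6}, not Lemma~\ref{lem2.5}).
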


\begin{proof} We
first get, by applying Bony's decomposition \eqref{bony}, that
\begin{equation}\label{2.27we}
\begin{split}
[\dot{\Delta}_{q},a]f&=\dot{\Delta}_{q}R(a,f)+\dot{\Delta}_{q}T_{f}a-T'_{\dot{\Delta}_{q}f}a-[\dot{\Delta}_{q},T_a]f \eqdefa\sum_{i=1}^4 \cI_q^i.
\end{split}
\end{equation}

In case  $\lambda\le p$, we deduce from
 Lemma \ref{lem2.1} that
\begin{equation*}
\begin{split}
 \|\cI_q^1\|_{L^p}
&
\lesssim
 2^{q}\sum_{k\geq q-3}
\|\dot{\Delta}_ka\|_{L^p}
\|{\widetilde{\dot{\Delta}}}_kf\|_{L^2}\lesssim  2^{q}
\sum_{k\geq q-3}c_{k}^2 2^{-\f2pk}
\|a\|_{\dot{B}_{p,2}^{\frac{2}{p}}}
\|f\|_{L^2}\\
&\lesssim  c_{q}^22^{q\bigl(1-\f2p\bigr)}
\|a\|_{\dot{B}_{\lambda,2}^{\frac{2}{\lambda}}}
\|f\|_{L^2}\lesssim c_{q}^22^{q\bigl(1-\f2p\bigr)}
\|a\|_{\dot{B}_{\lambda,2}^{\frac{2}{\lambda}}}
\|f\|_{L^2}.
\end{split}
\end{equation*}
While for $p<\lambda<\infty$, one has $\la>2$ and $\f{2\la}{2+\la}<2\leq p,$ so that  we infer
\begin{align*}
\|\cI_q^1\|_{L^p}
\lesssim &
2^{q\bigl(1+\frac{2}{\lambda}-\f2p\bigr)}
\sum_{k\geq q-3}
\|\dot{\Delta}_k a\|_{L^{\lambda}}\|{\widetilde{\dot\Delta}}_kf
\|_{L^2}\\
&
\lesssim
2^{q\bigl(1+\frac{2}{\lambda}-\f2p\bigr)}
\sum_{k\geq q-3}c_{k}^2
2^{-\frac{2}{\lambda}k}
\|a\|_{\dot{B}_{\lambda,2}^{\frac{2}{\lambda}}}
\|f\|_{L^2}\lesssim c_{q}^22^{q\bigl(1-\f2p\bigr)}
\|a\|_{\dot{B}_{\lambda,2}^{\frac{2}{\lambda}}}
\|f\|_{L^2}.
\end{align*}

Similarly in case $\lambda\le p,$ we have
\begin{align*}
\|\cI_q^{3}\|_{L^p}
&\lesssim
\sum_{k\geq q-3}\|\dot{S}_{k+2}\dot{\Delta}_{q}f\|_{L^\infty}
\|\dot{\Delta}_k a\|_{L^p}
\\
&\lesssim
2^{q}
\|\dot{\Delta}_{q}f\|_{L^2}
\sum_{k\geq q-3} c_k2^{-\f2pk}\|a\|_{\dot{B}_{p,2}^{\frac{2}{p}}}
\lesssim c_{q}^22^{q\bigl(1-\f2p\bigr)}
\|a\|_{\dot{B}_{\lambda,2}^{\frac{2}{\lambda}}}
\|f\|_{L^2}.
\end{align*}
And in case $p\le\lambda<\infty$, one has
\begin{equation*}
\begin{split}
\|\cI_q^{3}\|_{L^p}
&\lesssim
\sum_{k\geq q-3}
\|\dot{S}_{k+2}\dot{\Delta}_{q}f\|_{L^{\frac{p\lambda}{\lambda-p}}}
\|\dot{\Delta}_k a\|_{L^\lambda}
\\&
\lesssim c_q 2^{q\bigl(1+\f2\lambda-\f2p\bigr)}
\sum_{k\geq q-3}c_k 2^{-\frac{2}{\lambda}k}\|a\|_{\dot{B}_{\lambda,2}^{\frac{2}{\lambda}}}
\|f\|_{L^2}
\lesssim  c_{q}^22^{q\bigl(1-\f2p\bigr)}
\|a\|_{\dot{B}_{\lambda,2}^{\frac{2}{\lambda}}}
\|f\|_{L^2}.
\end{split}
\end{equation*}

On the other hand, we deduce from  Lemma \ref{lem2.1} that
\begin{equation*}
\begin{split}
\|\cI_q^2\|_{L^p}&\lesssim\sum_{|q-k|\leq4} \|\dot{S}_{k-1}f\|_{L^\infty}\|\dot{\Delta}_k a\|_{L^p}\\
&\lesssim
\sum_{|q-k|\leq4} c_k^22^{k\left(1-\f2p\right)}\|a\|_{\dot{B}_{p,2}^{\frac{2}{p}}}\|f\|_{L^2}\\
&\lesssim c_{q}^22^{q\bigl(1-\f2p\bigr)}
\|a\|_{\dot{B}_{p,2}^{\frac{2}{p}}}\|f\|_{L^2}\lesssim c_{q}^22^{q\bigl(1-\f2p\bigr)} \|a\|_{\dot{B}_{\lambda,2}^{\frac{2}{\lambda}}}
\|f\|_{L^2},
\end{split}
\end{equation*}
in case $\lambda\le p$.
While for $p\le\lambda<\infty$, we get, by a  similar estimate of $\cI_q^{3}$, that
\begin{equation*}
\begin{split}
\|\cI_q^2\|_{L^p}&\lesssim
\sum_{|q-k|\leq4} \|\dot{S}_{k-1}f\|_{L^{\frac{p\lambda}{\lambda-p}}}
\|\dot{\Delta}_k a\|_{L^\lambda}\\
&\lesssim
\sum_{|q-k|\leq4} c_k^22^{k\left(1-\f2p\right)}\|a\|_{\dot{B}_{\lambda,2}^{\frac{2}{\lambda}}}\|f\|_{L^2}\lesssim c_{q}^22^{q\bigl(1-\f2p\bigr)}
\|a\|_{\dot{B}_{\lambda,2}^{\frac{2}{\lambda}}}\|f\|_{L^2},
\end{split}
\end{equation*}

Finally it follows from Lemma \ref{lem-commutator-1} that
\begin{align*}
\|\cI_q^4\|_{L^p}
&\lesssim
\sum_{|q-k|\leq4} 2^{-q}
\|\nabla \dot{S}_{k-1} a\|_{L^\infty}\|\dot{\Delta}_{k}f\|_{L^p}\\
&\lesssim
2^{-q}\sum_{|q-k|\leq4} c_k^22^{k\left(2-\f2p\right)}\|a\|_{\dot{B}_{p,2}^{\frac{2}{p}}}\|f\|_{L^2}\\
&\lesssim c_{q}^22^{q\bigl(1-\f2p\bigr)}
\|a\|_{\dot{B}_{p,2}^{\frac{2}{p}}}\|f\|_{L^2}
\lesssim c_{q}^22^{q\bigl(1-\f2p\bigr)}
\|a\|_{\dot{B}_{\lambda,2}^{\frac{2}{\lambda}}}\|f\|_{L^2},
\end{align*}
in the case when $\lambda\le p.$
While for $p<\lambda$, we infer
\begin{equation*}
\begin{split}
\|\cI_q^4\|_{L^p}
&\lesssim
\sum_{|q-k|\leq4} 2^{-q}
\|\nabla \dot{S}_{k-1} a\|_{L^\lambda}
\|\dot{\Delta}_{k}f\|_{L^{\frac{p\lambda}{\lambda-p}}}
\\
&\lesssim
2^{-q}\sum_{|q-k|\leq4}c_k^22^{k\left(2-\f2p\right)}
\|a\|_{\dot{B}_{\lambda,2}^{\frac{2}{\lambda}}}\|f\|_{L^2}\lesssim c_{q}^22^{q\bigl(1-\f2p\bigr)}
\|a\|_{\dot{B}_{\lambda,2}^{\frac{2}{\lambda}}}\|f\|_{L^2}.
\end{split}
\end{equation*}

In view of \eqref{2.27we}, we achieve \eqref{2.26} by summarizing the above estimates. This completes the proof of the Lemma \ref{lem2.5}.
\end{proof}

\begin{lem}\label{lem2.6}
{\sl Let  $\lambda\in [1,\infty[$ and $p\in ]2,\infty[$ with
$\frac{1}{2}<\frac{1}{p}+\frac{1}{\lambda}\le1.$
 Let $g\in \dot{B}_{p,1}^{-1+\frac{2}{p}}$ and
 $f\in\dot B^{\frac{2}{\lambda}}_{\lambda,\infty}.$ Then we have
\begin{equation}\label{S2eq3}
\begin{split}
&\sum_{q\in\mathbb{Z}}2^{q\bigl(-1+\f2p\bigr)}
\|[\dot{\Delta}_q\mathbb{P}, f] g\|_{L^p}
\lesssim
\|g\|_{\dot{B}_{p,1}^{-1+\frac{2}{p}}}
\|f\|_{\dot B^{\frac{2}{\lambda}}_{\lambda,\infty}},\\
&
\sum_{q\in\mathbb{Z}}
\|[\dot{\Delta}_q\mathbb{P}, f] g\|_{L^2}
\lesssim
\|g\|_{\dot{B}_{p,1}^{-1+\frac{2}{p}}}
\|f\|_{\dot B^{\frac{2}{\lambda}}_{\lambda,\infty}}.
\end{split}
\end{equation}
}
\end{lem}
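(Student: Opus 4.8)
The plan is to prove Lemma \ref{lem2.6} by mimicking the commutator decomposition used in the proofs of Lemmas \ref{lem2.3} and \ref{lem2.5}, but now accounting for the Leray projector $\mathbb{P}$ and for the fact that $f$ is only assumed to lie in the Besov space with third index $\infty$ (so we must control the commutator in $\ell^1$ in $q$, which forces us to use the summability $g\in\dot B^{-1+\frac2p}_{p,1}$). First I would write, via homogeneous Bony decomposition \eqref{bony} applied to the product $fg$,
\begin{equation*}
[\dot{\Delta}_q\mathbb{P}, f]g
= \dot{\Delta}_q\mathbb{P}\bigl(R(f,g)\bigr)
+ \dot{\Delta}_q\mathbb{P}\bigl(T_g f\bigr)
- T'_{\dot{\Delta}_q\mathbb{P} g} f
+ [\dot{\Delta}_q\mathbb{P}, T_f] g
\eqdefa \sum_{i=1}^4 \mathcal{J}^i_q,
\end{equation*}
where I use that $\dot S_{q-1}\dot\Delta_q\mathbb{P}$ and $\dot\Delta_q\mathbb{P}$ act on frequency-localized pieces as in the classical commutator splitting, and that $\mathbb{P}$ is a homogeneous Fourier multiplier of degree $0$ so Lemma \ref{lem2.1} (third line) and Lemma \ref{lem-commutator-1} still apply with $\theta$ replaced by $\varphi\cdot(\text{matrix symbol of }\mathbb{P})$, which is still a smooth compactly supported (away from the origin) function on the relevant annulus.

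Next I would estimate the four pieces. For the remainder term $\mathcal{J}^1_q$, I would use Lemma \ref{lem2.1} to gain $2^{q(1+\frac2p-\frac2\lambda)}$ (or $2^q$, when $\lambda\le p$) from $\dot\Delta_q\mathbb{P}$ applied to a sum over $k\ge q-3$ of products $\dot\Delta_k f\,\widetilde{\dot\Delta}_k g$, then invoke $\|\dot\Delta_k f\|_{L^\lambda}\lesssim 2^{-\frac2\lambda k}\|f\|_{\dot B^{2/\lambda}_{\lambda,\infty}}$ and $\|\widetilde{\dot\Delta}_k g\|_{L^p}\lesssim d_k 2^{(1-\frac2p)k}\|g\|_{\dot B^{-1+2/p}_{p,1}}$ (Hölder with $\frac1{p'}=\frac1\lambda$ where $\frac1{p'}+\frac1p$... — more precisely use the third Bernstein inequality to place the product in $L^p$); the condition $\frac12<\frac1p+\frac1\lambda\le1$ guarantees both that the Hölder exponents are admissible and that the resulting power of $2^k$ makes the series over $k\ge q-3$ converge geometrically, producing $d_q 2^{q(-1+\frac2p)}$ after summation, with the $\ell^1$-summability in $q$ coming from the $\{d_q\}$ of $g$. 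The paraproduct terms $\mathcal{J}^2_q, \mathcal{J}^3_q$ are supported in $|q-k|\le 4$, so no extra summability is needed there: for $\mathcal{J}^2_q$ bound $\|\dot S_{k-1}g\|$ in the appropriate Lebesgue space by $\sum_{\ell\le k-2} 2^{(-1+\frac2p)\ell}\|\dot\Delta_\ell g\|_{L^p}\cdot(\ldots)$ — here I must be a little careful because $g$ has regularity $-1+\frac2p$ which is negative for $p<2$ but positive for $p>2$; since the lemma assumes $p\in]2,\infty[$, we have $-1+\frac2p<0$ actually for $p>2$, so $\dot S_{k-1}g$ is not controlled in $L^\infty$ directly and I would instead keep the low-frequency sum of $g$ and pair it against $\dot\Delta_k f$, reorganizing so the negative regularity of $g$ is absorbed by the positive regularity $\frac2\lambda>0$ of $f$; for $\mathcal{J}^3_q$ argue symmetrically. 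The commutator term $\mathcal{J}^4_q = \sum_{|k-q|\le4}[\dot\Delta_q\mathbb{P}, \dot S_{k-1} f]\dot\Delta_k g$ is handled by Lemma \ref{lem-commutator-1}, gaining $2^{-q}\|\nabla\dot S_{k-1} f\|_{L^\lambda}\|\dot\Delta_k g\|_{L^{p\lambda/(\lambda-p)}}$ (or $L^p$ and $L^\infty$ when $\lambda\le p$), and $\|\nabla\dot S_{k-1} f\|_{L^\lambda}\lesssim c_k 2^{(1-\frac2\lambda)\cdot}\ldots$ — wait, rather $\lesssim 2^{k}\cdot 2^{-\frac2\lambda k}\|f\|_{\dot B^{2/\lambda}_{\lambda,\infty}}$ up to a bounded factor — combined with $\|\dot\Delta_k g\|\lesssim d_k 2^{(-1+\frac2p)k}\cdots$ to close again with an $\ell^1$ factor $d_q$. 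Summing over $q$ and using $\sum_q d_q = 1$ gives the first inequality of \eqref{S2eq3}; the second inequality follows in the same way, this time using the version of Lemma \ref{lem2.1} that maps $L^p\to L^2$ on the $q$-th block (which costs $2^{2q(\frac1p-\frac12)}$, exactly compensating the weight discrepancy between the two lines) together with the constraint $\frac1p+\frac1\lambda\le 1$.

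The main obstacle I anticipate is the bookkeeping of Lebesgue exponents and the sign of the regularity index $-1+\frac2p$: because $p>2$ this index is negative, so unlike in Lemma \ref{lem2.5} (where $f\in\dot B^{2/\lambda}_{\lambda,2}$ had positive regularity and $g\in L^2$) one cannot simply put the low-frequency factor in $L^\infty$; one must always pair the low-frequency piece of whichever function has negative regularity against a high-frequency piece of the other and check that $\frac2\lambda + (-1+\frac2p) > 0$, i.e. $\frac1p+\frac1\lambda>\frac12$, which is precisely the hypothesis — so the endpoint case $\frac1p+\frac1\lambda=\frac12$ is genuinely excluded and the strict inequality is used to get a convergent geometric series rather than a divergent one. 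The second, milder obstacle is justifying that $\dot\Delta_q\mathbb{P}$ may be treated by the same Bernstein and commutator lemmas as a scalar operator $\dot\Delta_q$: this is routine since $\mathbb{P}=\mathrm{Id}+\nabla(-\Delta)^{-1}\dive$ has a symbol that is homogeneous of degree $0$ and smooth on $\R^2\setminus\{0\}$, so $\varphi(2^{-q}\cdot)\widehat{\mathbb{P}}(\cdot)$ satisfies the hypotheses of Lemmas \ref{lem2.1} and \ref{lem-commutator-1} uniformly in $q$ after rescaling, but it is worth a sentence in the write-up.
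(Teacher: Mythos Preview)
Your proposal follows essentially the same four-term Bony decomposition as the paper (the paper labels the pieces $\cK_q^i$), and the treatment of $\cJ^1_q,\cJ^2_q,\cJ^3_q,\cJ^4_q$ in $L^p$ proceeds along the same lines. Two points deserve correction.

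First, your worry that ``$\dot S_{k-1}g$ is not controlled in $L^\infty$ directly'' is misplaced. Bernstein gives
\[
\|\dot S_{k-1}g\|_{L^\infty}\le\sum_{\ell\le k-2}2^{\frac{2}{p}\ell}\|\dot\Delta_\ell g\|_{L^p}
\lesssim\|g\|_{\dot B^{-1+2/p}_{p,1}}\sum_{\ell\le k-2}d_\ell\,2^\ell\lesssim d_k 2^k\|g\|_{\dot B^{-1+2/p}_{p,1}},
\]
the last step being a discrete convolution of $\{d_\ell\}\in\ell^1$ against a geometrically decaying kernel. This is exactly how the paper handles $\cK^2_q$; no reorganization is needed.

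Second, and more substantively, your shortcut for the second line of \eqref{S2eq3} does not work as stated. You claim the $L^2$ estimate ``follows in the same way, this time using the version of Lemma~\ref{lem2.1} that maps $L^p\to L^2$ on the $q$-th block, costing $2^{2q(1/p-1/2)}$.'' But for $p>2$ Bernstein only goes \emph{upward}, from $L^2$ to $L^p$, not the reverse; and even if it did, $\cJ^3_q=-\sum_{k\ge q-3}\dot S_{k+2}\dot\Delta_q\mathbb{P}g\cdot\dot\Delta_k f$ is not localized to any single dyadic shell, so Bernstein cannot be applied to it wholesale. The paper instead re-runs the four estimates with target space $L^2$ and a different H\"older splitting, distinguishing the cases $\lambda\le2$ and $2\le\lambda<\infty$ (e.g.\ pairing $\|\dot\Delta_k f\|_{L^\lambda}$ against $\|\dot S_{k-1}g\|_{L^{2\lambda/(\lambda-2)}}$ when $\lambda\ge2$). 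This is routine but is genuinely a few more lines than you indicate; the scaling intuition you give (``exactly compensating the weight discrepancy'') is correct, but the mechanism is a fresh H\"older pairing, not Bernstein on the commutator.
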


\begin{proof}
Similar to \eqref{2.27we}, we write
\begin{equation}\label{2.28}
\begin{split}
[\dot{\Delta}_q\mathbb{P}, f] g=&\dot{\Delta}_q\mathbb{P}R(f, g)+\dot{\Delta}_q\mathbb{P}T_{ g}f-T'_{\dot{\Delta}_q g}f-[\dot{\Delta}_q\mathbb{P},T_{f}] g\eqdefa \sum_{i=1}^4\cK_{q}^i.
\end{split}
\end{equation}
As $\frac{1}{2}<\frac{1}{p}+\frac{1}{\lambda}\le1,$  we deduce from Lemma \ref{lem2.1} that
\begin{align*}
\|\cK_{q}^1\|_{L^p}
&
\lesssim
2^{\frac{2}{\lambda}q}
\sum_{k\geq q-3}
\|\dot{\Delta}_k f\|_{L^\lambda}
\|\widetilde{\dot{\Delta}}_k g\|_{L^p}\\
&
\lesssim
2^{\frac{2}{\lambda}q}
\sum_{k\geq q-3} d_k2^{-k\left(\f2\la+\f2p-1\right)}\|f\|_{\dot B^{\frac{2}{\lambda}}_{\lambda,\infty}}
\|g\|_{\dot{B}_{p,1}^{-1+\frac{2}{p}}}\\
&
\lesssim d_q2^{q\bigl(1-\f2p\bigr)}
\|f\|_{\dot B^{\frac{2}{\lambda}}_{\lambda,\infty}}
\|g\|_{\dot{B}_{p,1}^{-1+\frac{2}{p}}}.
\end{align*}
The same argument  gives rise to
\begin{equation*}
\begin{split}
\|\cK_{q}^1\|_{L^2}
\lesssim d_q
\|f\|_{\dot B^{\frac{2}{\lambda}}_{\lambda,\infty}}
\|g\|_{\dot{B}_{p,1}^{-1+\frac{2}{p}}}.
\end{split}
\end{equation*}
Here and in all that follows, we always designate $\{d_q\}_{q\in\Z}$ to be a generic element of $\ell^1(\Z)$ so that $\sum_{q\in\Z}d_q=1.$

Thanks to the  support properties  of Fourier transform to the terms in
$\dot{S}_{k+2}\dot{\Delta}_q g$, we get
\begin{align*}
\|\cK_{q}^3\|_{L^p}
&\lesssim
\sum_{k\geq q-3}
\|\dot{S}_{k+2}\dot{\Delta}_q g\|_{L^{\infty}}
\|\dot{\Delta}_{k}f\|_{L^p}\lesssim
2^{\frac{2}{p}q}\|\dot{\Delta}_q g\|_{L^{p}}
\sum_{k\geq q-3} 2^{-\f2p k}\|f\|_{\dot B^{\frac{2}{p}}_{p,\infty}}\\
&\lesssim 2^{\frac{2}{p}q} d_q2^{q\bigl(1-\f2p\bigr)}2^{-\f2p q}
\|f\|_{\dot B^{\frac{2}{p}}_{p,\infty}}
\|g\|_{\dot{B}_{p,1}^{-1+\frac{2}{p}}}\\
&\lesssim d_q2^{q\bigl(1-\f2p\bigr)}
\|f\|_{\dot B^{\frac{2}{\lambda}}_{\lambda,\infty}}
\|g\|_{\dot{B}_{p,1}^{-1+\frac{2}{p}}},
\end{align*}
in the case when
$\lambda\le p.$
While if $p\le\lambda$, one has
\begin{equation*}
\begin{split}
\|\cK_{q}^3\|_{L^p}
&
\lesssim
2^{\frac{2}{\lambda}q}
\|\dot{\Delta}_q g\|_{L^{p}}
\sum_{k\geq q-3}
\|\dot{\Delta}_{k}f\|_{L^\lambda}\\
&
\lesssim d_q2^{q\bigl(1-\f2p\bigr)}
\|f\|_{\dot B^{\frac{2}{\lambda}}_{\lambda,\infty}}
\|g\|_{\dot{B}_{p,1}^{-1+\frac{2}{p}}}.
\end{split}
\end{equation*}
Similarly, one has
\begin{align*}
\|\cK_{q}^3\|_{L^2}
&\lesssim d_q
\|f\|_{\dot B^1_{2,\infty}}\|g\|_{\dot{B}_{p,1}^{-1+\frac{2}{p}}}
\lesssim d_q
\|f\|_{\dot B^{\frac{2}{\lambda}}_{\lambda,\infty}}
\|g\|_{\dot{B}_{p,1}^{-1+\frac{2}{p}}},
\end{align*}
in the case when  $\lambda\le2.$
And for $2\le\lambda<\infty$, we have
\begin{equation*}
\begin{split}
\|\cK_{q}^2\|_{L^2}
&\lesssim
\sum_{k\geq q-3}
\|\dot{S}_{k+2}\dot{\Delta}_q g\|_{L^{\frac{2\lambda}{\lambda-2}}}
\|\dot{\Delta}_{k}f\|_{L^\lambda}
\lesssim
2^{q\bigl(\frac{2}{p}+\frac{2}{\lambda}-1\bigr)}
\|\dot{\Delta}_q g\|_{L^{p}}
\sum_{k\geq q-3}
\|\dot{\Delta}_{k}f\|_{L^\lambda}
\\
&\lesssim
2^{q\bigl(\frac{2}{p}+\frac{2}{\lambda}-1\bigr)}
d_q2^{q(1-\f2p)}
\sum_{k\geq q-3}2^{-\frac{2}{\lambda}k}\|f\|_{\dot B^{\frac{2}{\lambda}}_{\lambda,\infty}}
\|g\|_{\dot{B}_{p,1}^{-1+\frac{2}{p}}}\lesssim d_q \|f\|_{\dot B^{\frac{2}{\lambda}}_{\lambda,\infty}}
\|g\|_{\dot{B}_{p,1}^{-1+\frac{2}{p}}}.
\end{split}
\end{equation*}

Notice that
\begin{equation*}
\begin{split}
\|\dot{S}_{k-1} g\|_{L^\infty}
\leq
\sum_{\ell\leq k-2}2^{\frac{2}{p}\ell}\|\dot{\Delta}_{\ell} g\|_{L^p}\lesssim \|g\|_{\dot{B}_{p,1}^{-1+\frac{2}{p}}}\sum_{\ell\leq k-2}d_{\ell} 2^{\ell}\lesssim d_k2^k\|g\|_{\dot{B}_{p,1}^{-1+\frac{2}{p}}},
\end{split}
\end{equation*}
so that one has
\begin{align*}
\|\cK_{q}^2\|_{L^p}
&\lesssim
\sum_{|q-k|\leq4}
\|\dot{\Delta}_{k}f\|_{L^p}\|\dot{S}_{k-1} g\|_{L^\infty}
\\&
\lesssim  d_q2^{q\bigl(1-\f2p\bigr)}
\|f\|_{\dot B^{\frac{2}{p}}_{p,\infty}}
\|g\|_{\dot{B}_{p,1}^{-1+\frac{2}{p}}}\lesssim  d_q2^{q\bigl(1-\f2p\bigr)}
\|f\|_{\dot B^{\frac{2}{\lambda}}_{\lambda,\infty}}
\|g\|_{\dot{B}_{p,1}^{-1+\frac{2}{p}}},
\end{align*}
in case $\lambda\le p.$
For $p\le\lambda<\infty,$ one has
\beno
\|\dot{S}_{k-1} g\|_{L^{\frac{p\lambda}{\lambda-p}}}\lesssim \sum_{\ell\leq k-2}
2^{\frac{2}{\lambda}\ell}\|\dot{\Delta}_{\ell} g\|_{L^p}
\lesssim d_k2^{k\left(1+\f2\la-\f2p\right)}\|g\|_{\dot{B}_{p,1}^{-1+\frac{2}{p}}},
\eeno
from which, we infer
\begin{align*}
\|\cK_{q}^2\|_{L^p}
&\lesssim
\sum_{|q-k|\leq4}
\|\dot{\Delta}_{k}f\|_{L^\lambda}
\|\dot{S}_{k-1} g\|_{L^{\frac{p\lambda}{\lambda-p}}}
\\
&\lesssim d_q
2^{q\bigl(1-\f2p\bigr)}
\|f\|_{\dot B^{\frac{2}{\lambda}}_{\lambda,\infty}}
\|g\|_{\dot{B}_{p,1}^{-1+\frac{2}{p}}}.
\end{align*}
Similarly, one has
\begin{align*}
\|\cK_{q}^2\|_{L^2}
&\lesssim d_q
\|f\|_{\dot B^1_{2,\infty}}\|g\|_{\dot{B}_{p,1}^{-1+\frac{2}{p}}}\lesssim d_q
\|f\|_{\dot B^{\frac{2}{\lambda}}_{\lambda,\infty}}
\|g\|_{\dot{B}_{p,1}^{-1+\frac{2}{p}}},
\end{align*}
in case $\lambda\le2.$
And for $2\le\lambda<\infty$, we have
\beno
\|\dot{S}_{k-1} g\|_{L^{\frac{2\lambda}{\lambda-2}}}\lesssim \sum_{\ell\leq k-2}
2^{(\frac{2}{\lambda}+\frac{2}{p}-1)\ell}\|\dot{\Delta}_{\ell} g\|_{L^p}
\lesssim d_k 2^{\f{2}\la k}\|g\|_{\dot{B}_{p,1}^{-1+\frac{2}{p}}},
\eeno
from which, we infer
\begin{align*}
\|\cK_{q}^2\|_{L^2}
&\lesssim
\sum_{|q-k|\leq4}
\|\dot{\Delta}_{k}f\|_{L^\lambda}
\|\dot{S}_{k-1} g\|_{L^{\frac{2\lambda}{\lambda-2}}}
\\&\lesssim d_q
\|f\|_{\dot B^{\frac{2}{\lambda}}_{\lambda,\infty}}
\|g\|_{\dot{B}_{p,1}^{-1+\frac{2}{p}}}.
\end{align*}

Observing that
\beno
\|\nabla\dot S_{k-1}f\|_{L^\infty}\lesssim \sum_{\ell\le k-2}2^{k\left(1+\frac{2}{\lambda}\right)}
\|\dot\Delta_{\ell}f\|_{L^\lambda}\lesssim 2^k\|f\|_{\dot B^{\frac{2}{\lambda}}_{\lambda,\infty}},
\eeno
so that we deduce from Lemma \ref{lem-commutator-1} that
\begin{equation*}
\begin{split}
\|\cK_{q}^4\|_{L^p}
&\lesssim 2^{-q}
\sum_{|q-k|\le4}
\|\nabla\dot S_{k-1}f\|_{L^\infty}
\|\dot\Delta_k g\|_{L^p}
\\&
\lesssim d_q2^{q\bigl(1-\f2p\bigr)}
\|f\|_{\dot B^{\frac{2}{\lambda}}_{\lambda,\infty}}
\|g\|_{\dot{B}_{p,1}^{-1+\frac{2}{p}}}.
\end{split}
\end{equation*}
While due to $\frac{1}{2}<\frac{1}{p}+\frac{1}{\lambda}$ and $p<\infty,$ we get,
by using the inequality \eqref{commutator-compact-0}, that
\begin{align*}
\|\cK_{q}^4\|_{L^2}
&\lesssim
2^{-q}\sum_{|q-k|\le4}
\|\nabla\dot S_{k-1}f\|_{L^{\frac{2p}{p-2}}}
\|\dot\Delta_k g\|_{L^p}
\\&
\lesssim
\sum_{|q-k|\le4}2^{-q}2^{\f2p k}
\|\dot\Delta_k g\|_{L^p}\|f\|_{\dot B^{\frac{2}{\lambda}}_{\lambda,\infty}}
\\&
\lesssim d_q
\|f\|_{\dot B^{\frac{2}{\lambda}}_{\lambda,\infty}}
\|g\|_{\dot{B}_{p,1}^{-1+\frac{2}{p}}}.
\end{align*}

By summarizing the above estimates, we arrive at \eqref{S2eq3}.
This completes the proof of Lemma \ref{lem2.6}.
\end{proof}

\begin{lem}\label{lem2.7}
{\sl Let $p\in [2,\infty[$ and $\lambda\in [1,\infty[.$
Let $u\in \dot{B}_{p,1}^{1+\frac{2}{p}}\cap H^1$  and
$a\in \dot B^{\frac{2}{\lambda}}_{\lambda,2}.$ Then we have
\begin{equation}\label{2.29}
\begin{split}
\sum_{q\in\mathbb{Z}}2^{q\bigl(-1+\f2p\bigr)}
\|[\dot{\Delta}_q\mathbb{P},\dot{S}_m a]\Delta u\|_{L^p}
\lesssim
2^{m}\|a\|_{\dot B^{\frac{2}{\lambda}}_{\lambda,2}}
\Bigl(\|\nabla u\|_{L^2}+\|u\|_{L^2}^{\frac{1}{2}}
\|u\|_{\dot{B}_{p,1}^{1+\frac{2}{p}}}^{\frac{1}{2}}\Bigr).
\end{split}
\end{equation}
}
\end{lem}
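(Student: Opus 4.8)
The plan is to follow closely the proof of Lemma \ref{lem2.6}, with $\dot S_m a$ playing the role of ``$f$'' there and $\Delta u$ that of ``$g$'', while tracking the two features special to the present situation: the low-frequency truncation in $\dot S_m a$ produces the factor $2^m$, and the low/high frequency structure of $\Delta u$ produces the bracket on the right-hand side of \eqref{2.29}. First I would decompose, exactly as in \eqref{2.28},
\[
[\dot\Delta_q\mathbb{P},\dot S_m a]\Delta u
=\dot\Delta_q\mathbb{P}R(\dot S_m a,\Delta u)+\dot\Delta_q\mathbb{P}T_{\Delta u}(\dot S_m a)
-T'_{\dot\Delta_q\Delta u}(\dot S_m a)-[\dot\Delta_q\mathbb{P},T_{\dot S_m a}]\Delta u
\eqdef\sum_{i=1}^{4}\mathcal J^i_q .
\]

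Two elementary observations organize the argument. First, since $\dot\Delta_k\dot S_m a\equiv0$ for $k\ge m$, the terms $\mathcal J^1_q,\mathcal J^2_q,\mathcal J^3_q$ vanish unless $q\lesssim m$; moreover Lemma \ref{lem2.1} gives $\|\dot\Delta_\ell a\|_{L^\infty}\lesssim c_\ell\|a\|_{\dot B^{2/\lambda}_{\lambda,2}}$ for every $\lambda\in[1,\infty[$, so a Cauchy--Schwarz argument over dyadic blocks yields
\[
\|\nabla\dot S_{j}(\dot S_m a)\|_{L^\infty}
\lesssim\sum_{\ell\le\min(j,m)}2^{\ell}c_\ell\,\|a\|_{\dot B^{2/\lambda}_{\lambda,2}}
\lesssim 2^{\min(j,m)}\|a\|_{\dot B^{2/\lambda}_{\lambda,2}}\le 2^{m}\|a\|_{\dot B^{2/\lambda}_{\lambda,2}} ,
\]
which is the only origin of the factor $2^m$. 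Second, writing $\Delta u=|D|^{2}u$, the block $\dot\Delta_k\Delta u=2^{2k}\dot\Delta_k u$ is controlled in two complementary ways, by $\|\dot\Delta_k\Delta u\|_{L^2}\lesssim2^{k}c_k\|\nabla u\|_{L^2}$ (its low frequencies lie in $\dot H^{-1}$) and by $\|\dot\Delta_k u\|_{L^p}\lesssim2^{-2k/p}d_k\|u\|_{\dot B^{2/p}_{p,1}}$; combined with the interpolation inequality $\|u\|_{\dot B^{2/p}_{p,1}}\lesssim\|u\|_{L^2}^{1/2}\|u\|_{\dot B^{1+2/p}_{p,1}}^{1/2}$ (a consequence of the $2$-D embedding $L^2\hookrightarrow\dot B^{-1+2/p}_{p,2}$ and Remark \ref{rmk1.1}(2)), this shows that the quantity $\|\nabla u\|_{L^2}+\|u\|_{L^2}^{1/2}\|u\|_{\dot B^{1+2/p}_{p,1}}^{1/2}$ governs $\Delta u$ in the negative-index Besov spaces that will appear.

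Then I would estimate each $\mathcal J^i_q$ in $L^p$, splitting into the cases $\lambda\le p$ and $p<\lambda$ as in Lemmas \ref{lem2.5}--\ref{lem2.6}. For the three product terms one uses H\"older's inequality --- placing $\dot\Delta_k a$ in $L^\lambda$, or in $L^\infty$ via Bernstein when $\lambda\le p$, and the $\Delta u$ factor in the conjugate Lebesgue space, estimated by $\|\nabla u\|_{L^2}$ through the $L^2\hookrightarrow L^{r}$ Bernstein inequality on $\R^2$; for the remainder piece $\mathcal J^1_q$ one must, just as for $\mathcal K^1_q$ in the proof of Lemma \ref{lem2.6}, first estimate $\dot\Delta_q\mathbb{P}$ of the product in $L^{p_0}$ with $\tfrac1{p_0}=\tfrac1\lambda+\tfrac1p$ and pay a Bernstein factor $2^{2q/\lambda}$, which supplies the decay needed for summing in $q$. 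Since in these three terms the relevant frequency is capped at $m$, the resulting sums $\sum_{q\lesssim m}2^{q}(\cdots)_q$ add up --- via Cauchy--Schwarz on the $\ell^2$ sequences produced by $a\in\dot B^{2/\lambda}_{\lambda,2}$ and $\nabla u\in L^2$ --- to $2^{m}\|a\|_{\dot B^{2/\lambda}_{\lambda,2}}\|\nabla u\|_{L^2}$. For the commutator term, writing $\mathcal J^4_q=\sum_{|k-q|\le4}[\dot\Delta_q\mathbb{P},\dot S_{\min(k-1,m)}a]\dot\Delta_k\Delta u$ and noting that the symbol of $\dot\Delta_q\mathbb{P}$ is of the form $\theta(2^{-q}D)$ with $\theta$ smooth (since $\varphi$ vanishes near the origin, the singularity of the Leray projector is killed), I would invoke Lemma \ref{lem-commutator-1}, with $\nabla$ of the density factor measured in $L^\infty$, to gain a factor $2^{-q}$; combining $2^{-q}\cdot2^{\min(k,m)}\cdot2^{2k}\|\dot\Delta_k u\|_{L^p}$ with $\|\dot\Delta_k u\|_{L^p}\lesssim2^{-2k/p}d_k\|u\|_{\dot B^{2/p}_{p,1}}$ and multiplying by $2^{q(-1+2/p)}$, all powers of $2^{q}$ cancel and one is left with $\sum_{q\in\Z}2^{\min(q,m)}d_q\lesssim2^{m}$, so $\mathcal J^4$ contributes at most $2^{m}\|a\|_{\dot B^{2/\lambda}_{\lambda,2}}\|u\|_{\dot B^{2/p}_{p,1}}\lesssim2^{m}\|a\|_{\dot B^{2/\lambda}_{\lambda,2}}\|u\|_{L^2}^{1/2}\|u\|_{\dot B^{1+2/p}_{p,1}}^{1/2}$. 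Summing the four contributions gives \eqref{2.29}.

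The hard part is the bookkeeping. In each Bony piece, and in each of the two cases $\lambda\le p$ and $p<\lambda$, one must choose the Lebesgue exponent of the intermediate H\"older estimate so that it is legitimate on $\R^2$ (there is no embedding $L^\lambda\hookrightarrow L^p$ when $\lambda>p$), so that after the Bernstein adjustments the $q$-series converges at both ends, and so that the surviving powers of $2^{q}$ and $2^{k}$ recombine to exactly $2^{m}$ times the claimed quantity with no loss. The one genuinely delicate point is keeping the low/high-frequency split of $\Delta u$ consistent with these exponent choices, so that $\|\nabla u\|_{L^2}$ alone suffices for the three product terms --- where the cap $q\lesssim m$ together with the $\ell^2$ structure does the job --- while the $\ell^{1}$-type quantity $\|u\|_{L^2}^{1/2}\|u\|_{\dot B^{1+2/p}_{p,1}}^{1/2}$ is forced only by the commutator term, which carries no frequency cap; the rest is routine and parallels Lemmas \ref{lem2.5} and \ref{lem2.6}.
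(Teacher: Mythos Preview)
Your proposal is correct and follows essentially the same route as the paper: the same Bony decomposition into four pieces, the same mechanism producing the factor $2^m$ (the frequency cap on $\dot S_m a$, equivalently $\|\nabla\dot S_{k-1}\dot S_m a\|_{L^\infty}\lesssim 2^m\|a\|_{\dot B^{2/\lambda}_{\lambda,2}}$), the use of $\|\nabla u\|_{L^2}$ for the three product pieces, and the commutator estimate of Lemma~\ref{lem-commutator-1} on the fourth piece followed by the interpolation $\|u\|_{\dot B^{2/p}_{p,1}}\lesssim\|u\|_{L^2}^{1/2}\|u\|_{\dot B^{1+2/p}_{p,1}}^{1/2}$. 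The only differences are bookkeeping: the paper splits the remainder term $\mathcal L^1_q$ at $\lambda=2$ (using $L^1$ or $L^{2\lambda/(\lambda+2)}$ as the intermediate space) rather than at $\lambda=p$, and it gets $2^m$ by writing $\|\nabla\dot S_m a\|_{\dot B^{2/\lambda}_{\lambda,2}}\lesssim 2^m\|a\|_{\dot B^{2/\lambda}_{\lambda,2}}$ rather than by summing over $q\lesssim m$, but these variations lead to the same estimates.
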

\begin{proof} Once again similar to \eqref{2.27we}, we write
\begin{equation*} 
\begin{split}
[\dot{\Delta}_q\mathbb{P}, \dot{S}_m a]\Delta u=&\dot{\Delta}_q\mathbb{P} R(\dot{S}_m a,\Delta u)+\dot{\Delta}_q\mathbb{P}T_{\Delta u}\dot{S}_m a\\
&-T'_{\dot{\Delta}_q\Delta u}\dot{S}_m a-[\dot{\Delta}_q\mathbb{P},T_{\dot{S}_m a}]\Delta u\eqdefa \sum_{i=1}^4\cL_{q}^i.
\end{split}
\end{equation*}

It follows from  Lemma \ref{lem2.1} that
\begin{align*}
\sum_{q\in\mathbb{Z}}2^{q\bigl(-1+\f2p\bigr)}
\|\cL_{q}^1\|_{L^p}
&
\lesssim \sum_{q\in\mathbb{Z}}2^{q}\sum_{k\geq q-3}2^{-k}\|\dot{\Delta}_k \nabla^2\dot{S}_m a\|_{L^2}\|\dot{\widetilde{{\Delta}}}_{k}\nabla u\|_{L^2}\\
&
\lesssim \sum_{q\in\mathbb{Z}}2^{q}\sum_{k\geq q-3}c_k^22^{-k}\|\nabla^2\dot{S}_m a\|_{L^2}\|\nabla u\|_{L^2}\\
&
\lesssim
2^{m}\|\nabla a\|_{L^2}\|\nabla u\|_{L^2}\lesssim
2^m\|a\|_{\dot B^{\frac{2}{\lambda}}_{\lambda,2}}\|\nabla u\|_{L^2},
\end{align*}
in case $\lambda\le2.$
If $2\le\lambda<\infty,$ we have
\begin{equation*}
\begin{split}
\sum_{q\in\mathbb{Z}}2^{q\bigl(-1+\f2p\bigr)}
\|\cL_{q}^1\|_{L^p}
\le
\sum_{q\in\mathbb{Z}}2^{q\frac{2}{\lambda}}
\|\dot{\Delta}_q\mathbb{P}R(\dot{S}_m a,\Delta u)\|_{L^{\frac{2\lambda}{\lambda+2}}}
\lesssim
2^m\|a\|_{\dot B^{\frac{2}{\lambda}}_{\lambda,2}}\|\nabla u\|_{L^2}.
\end{split}
\end{equation*}

Notice that $\|\dot{S}_{k-1}\Delta u\|_{L^\infty}\lesssim c_k2^{2k}\|\nabla u\|_{L^2},$ we infer
\begin{align*}
\sum_{q\in\mathbb{Z}}2^{q\bigl(-1+\f2p\bigr)}
\|\cL_{q}^2\|_{L^p}
&\lesssim
\sum_{q\in\mathbb{Z}}2^{q\bigl(-1+\f2p\bigr)}\sum_{|q-k|\leq4}
\|\dot{S}_{k-1}\Delta u\|_{L^\infty}\|\dot{\Delta}_k\dot{S}_m a\|_{L^p}
\\
&\lesssim\sum_{q\in\mathbb{Z}}2^{q\bigl(-1+\f2p\bigr)}\sum_{|q-k|\leq4} c_k^2 2^{k\left(1-\f2p\right)}
\|\na \dot{S}_m a\|_{\dot B^{\frac{2}{p}}_{p,2}}\|\nabla u\|_{L^2}\\
&\lesssim
2^{m}\|a\|_{\dot B^{\frac{2}{p}}_{p,2}}\|\nabla u\|_{L^2}\lesssim
2^{m}\|a\|_{\dot B^{\frac{2}{\lambda}}_{\lambda,2}}\|\nabla u\|_{L^2},
\end{align*}
in case $\lambda\le p.$
When $p\le\lambda$, we have
\beno
\|\dot{S}_{k-1}\Delta u\|_{L^{\frac{p\lambda}{\lambda-p}}}\lesssim \sum_{\ell\leq k-2}2^{2\ell(1+\frac{1}{\lambda}-\frac{1}{p})}
\|\dot{\Delta}_{\ell}\nabla u\|_{L^2}\lesssim c_k2^{2k\bigl(1+\frac{1}{\lambda}-\frac{1}{p}\bigr)}\|\nabla u\|_{L^2},
\eeno
so that one has
\begin{align*}
\sum_{q\in\mathbb{Z}}2^{q\bigl(-1+\f2p\bigr)}
\|\cL_{q}^2\|_{L^p}
\lesssim&
\sum_{q\in\mathbb{Z}}2^{q\bigl(-1+\f2p\bigr)}
\sum_{|q-k|\leq4}
\|\dot{S}_{k-1}\Delta u\|_{L^{\frac{p\lambda}{\lambda-p}}}
\|\dot{\Delta}_k\dot{S}_m a\|_{L^\lambda}
\\
\lesssim&
\sum_{q\in\mathbb{Z}}2^{q\bigl(-1+\f2p\bigr)}
\sum_{|q-k|\leq4}c_k^2 2^{k\bigl(1-\f2p\bigr)}\|\nabla \dot{S}_ma\|_{\dot B^{\frac{2}{\lambda}}_{\lambda,2}}\|\nabla u\|_{L^2}\\
\lesssim&
2^{m}\|a\|_{\dot B^{\frac{2}{\lambda}}_{\lambda,2}}\|\nabla u\|_{L^2}.
\end{align*}

Considering the support properties to the Fourier transform of the terms in $\dot{S}_{k+2}\dot{\Delta}_q\Delta u$, we deduce
\begin{align*}
\sum_{q\in\mathbb{Z}}2^{q\bigl(-1+\f2p\bigr)}
\|\cL_{q}^3\|_{L^p}
&\lesssim
\sum_{q\in\mathbb{Z}}2^{q\bigl(-1+\f2p\bigr)}
\sum_{k\geq q-3}
\|\dot{\Delta}_q\Delta u\|_{L^{\infty}}
\|\dot{\Delta}_{k}\dot{S}_m a\|_{L^p}
\\&
\lesssim
\sum_{q\in\mathbb{Z}}c_q2^{q\bigl(1+\f2p\bigr)}\|\nabla u\|_{L^2}
\sum_{k\geq q-3}\|\dot{\Delta}_{k}\dot{S}_m a\|_{L^p}
\\
&\lesssim
2^{m}\|\nabla u\|_{L^2}\|a\|_{\dot B^{\frac{2}{p}}_{p,2}}\lesssim
2^{m}\|\nabla u\|_{L^2}\|a\|_{\dot B^{\frac{2}{\lambda}}_{\lambda,2}},
\end{align*}
in case $\lambda\le p.$ If $p\le\lambda$, we have
\begin{align*}
\sum_{q\in\mathbb{Z}}2^{q\bigl(-1+\f2p\bigr)}
\|\cL_{q}^3\|_{L^p}
&\lesssim
\sum_{q\in\mathbb{Z}}2^{q\bigl(-1+\f2p\bigr)}
\sum_{k\geq q-3}
\|\dot{\Delta}_q\Delta u\|_{L^{\frac{p\lambda}{\lambda-p}}}
\|\dot{\Delta}_{k}\dot{S}_m a\|_{L^\lambda}
\\&\lesssim
\sum_{q\in\mathbb{Z}}\sum_{k\geq q-3}2^{(q-k)(1+\frac{2}{\lambda})}
\|\dot{\Delta}_q\nabla u\|_{L^2}
2^{k\frac{2}{\lambda}}\|\dot{\Delta}_{k}\nabla\dot{S}_m a\|_{L^\lambda}
\\&\lesssim
2^{m}\|\nabla u\|_{L^2}\|a\|_{\dot B^{\frac{2}{\lambda}}_{\lambda,2}}.
\end{align*}

Finally we deduce from Lemma \ref{lem-commutator-1} that
\begin{align*}
\sum_{q\in\mathbb{Z}}2^{q\bigl(-1+\f2p\bigr)}
\|\cL_{q}^4\|_{L^p}
&
\lesssim
\sum_{q\in\mathbb{Z}}2^{q\bigl(-2+\f2p\bigr)}\sum_{|k-q|\leq4}
\|\nabla \dot{S}_{k-1}  \dot{S}_{m} a\|_{L^\infty}
2^{2k}\|\dot{\Delta}_k u\|_{L^p}
\\&
\lesssim
\sum_{q\in\mathbb{Z}}\sum_{|k-q|\leq4}2^{(-2+\frac{2}{p})(q-k)}
2^{k\frac{2}{p}}\|\dot{\Delta}_k\nabla u\|_{L^p}
\|\nabla \dot{S}_{m} a\|_{\dot B^{\frac{2}{\lambda}}_{\lambda,1}}
\\&
\lesssim
2^{m}\|a\|_{\dot B^{\frac{2}{\lambda}}_{\lambda,\infty}}
\|u\|_{\dot{B}_{p,1}^{\frac{2}{p}}}.
\end{align*}
While for any $N$ we observe that
\begin{align*}
 \|u\|_{\dot{B}_{p,1}^{\frac{2}{p}}}
 \lesssim \sum_{q\leq N} 2^q\|\dot{\Delta}_{q}u\|_{L^2}+\sum_{q\geq N} 2^{q(\frac{2}{p}+1)}2^{-q}\|\dot{\Delta}_{q}u\|_{L^p}
 \lesssim  2^N\|u\|_{L^2}+2^{-N}\|u\|_{\dot{B}^{1+\frac{2}{p}}_{p,\infty}}.
\end{align*}
 By taking  $N\in \mathbb{Z}$ in the above inequality so that
\begin{equation*}
\begin{split}
2^{2N}\approx \|u\|_{L^2}^{-1}\|u\|_{\dot{B}^{1+\frac{2}{p}}_{p,\infty}},
\end{split}
\end{equation*}
we obtain
\begin{equation*} 
\begin{split}\|u\|_{\dot{B}_{p,1}^{\frac{2}{p}}}\lesssim\|u\|_{L^2}^{\frac{1}{2}}\|u\|_{\dot{B}_{p,1}^{1+\frac{2}{p}}}^{\frac{1}{2}},
\end{split}
\end{equation*}
As a result, it comes out
\beno
\sum_{q\in\mathbb{Z}}2^{q\bigl(-1+\f2p\bigr)}
\|\cL_{q}^4\|_{L^p}\lesssim
2^{m}\|a\|_{\dot B^{\frac{2}{\lambda}}_{\lambda,\infty}}\|u\|_{L^2}^{\frac{1}{2}}\|u\|_{\dot{B}_{p,1}^{1+\frac{2}{p}}}^{\frac{1}{2}}.
\eeno

By summarizing the above estimates, we arrive at \eqref{2.29}, which completes the proof of this lemma.
\end{proof}

By taking the space divergence to the momentum equation of \eqref{1.3}, we find
\beq \label{S2eq2}
\dive\bigl((1+a)\nabla\Pi\bigr)=\dive(a\Delta u)-\dive(u\cdot\nabla u).
\eeq
Let us start  with the estimate of  $\|\nabla \Pi\|_{L^2}.$

\begin{lem}\label{lem2.4}
{\sl Let  $p\in [2,+\infty[$ and $\lambda\in [1, +\infty[$ satisfy
$\frac{1}{2}<\frac{1}{p}+\frac{1}{\lambda}\le1.$
Let $a\in \dot B^{\frac{2}{\lambda}}_{\lambda,2}\cap L^\infty,$ $\nabla \Pi\in L^2$, and $u\in \dot{H}^1$ with $\dive\,u=0$
satisfy the equation \eqref{S2eq2}. Then under the assumption that $1+a\geq c,$ for any $m \in \mathbb{Z}^+$,  there holds
\begin{equation}\label{2.18}
\|\nabla \Pi\|_{L^2}
\lesssim
\|a-\dot{S}_m a\|_{\dot B^{\frac{2}{\lambda}}_{\lambda,2}}
\|u\|_{\dot{B}_{p,1}^{1+\frac{2}{p}}}
+2^{m}\|\nabla u\|_{L^{2}}
\|a\|_{\dot B^{\frac{2}{\lambda}}_{\lambda,\infty}\cap{L^\infty}}
+\|\nabla u\|_{L^2}^2.
\end{equation}
}
\end{lem}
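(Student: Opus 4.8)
The plan is to estimate $\nabla\Pi$ via an elliptic-type energy argument applied directly to the equation \eqref{S2eq2}, splitting the coefficient $a$ into its low-frequency part $\dot S_m a$ (which is bounded and smooth) and the high-frequency remainder $a-\dot S_m a$ (which is small in $\dot B^{2/\lambda}_{\lambda,2}$ and will be paired against the most regular norm of $u$). First I would rewrite \eqref{S2eq2} as
\begin{equation*}
\dive\bigl((1+\dot S_m a)\nabla\Pi\bigr)=\dive\bigl((a-\dot S_m a)\Delta u\bigr)-\dive\bigl(a\,\nabla\Pi\cdot\text{(corrections)}\bigr)+\dive(a\Delta u)-\dive(u\cdot\nabla u),
\end{equation*}
or rather, more cleanly, test the equation against $\Pi$ itself: multiply by $\Pi$ and integrate by parts to get
\begin{equation*}
\int (1+\dot S_m a)|\nabla\Pi|^2\,dx = -\int \bigl(a-\dot S_m a\bigr)\Delta u\cdot\nabla\Pi\,dx + \int (a-\dot S_m a)\nabla\Pi\cdot\nabla\Pi\,dx\cdots
\end{equation*}
hmm, let me be careful: the clean identity is $\int(1+a)\nabla\Pi\cdot\nabla\Pi = \int a\Delta u\cdot\nabla\Pi - \int (u\cdot\nabla u)\cdot\nabla\Pi$, and then I would move the $\dot S_m$ split inside. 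Using $1+a\ge c$ on the left we control $\|\nabla\Pi\|_{L^2}^2$ from below.

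The three source terms are then estimated as follows. For the term $\int(a-\dot S_m a)\Delta u\cdot\nabla\Pi$, since $\lambda$ and $p$ satisfy $\frac12<\frac1p+\frac1\lambda\le1$, I would put $a-\dot S_m a$ in $\dot B^{2/\lambda}_{\lambda,2}$, $\Delta u$ in $\dot B^{-1+2/p}_{p,1}$ (which costs $\|u\|_{\dot B^{1+2/p}_{p,1}}$), and $\nabla\Pi$ in the dual space $L^2$ — here one checks that $\dot B^{2/\lambda}_{\lambda,2}\cdot \dot B^{-1+2/p}_{p,1}\hookrightarrow L^2$, which is exactly the product law (Proposition \ref{prop2.2}) combined with Bernstein under the stated index constraint, giving the first term $\|a-\dot S_m a\|_{\dot B^{2/\lambda}_{\lambda,2}}\|u\|_{\dot B^{1+2/p}_{p,1}}\|\nabla\Pi\|_{L^2}$; absorb one power of $\|\nabla\Pi\|_{L^2}$. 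For the term involving $\dot S_m a$, namely $\int \dot S_m a\,\Delta u\cdot\nabla\Pi$, I would integrate by parts once more to pass a derivative off $\Delta u$, producing a commutator-type expression $[\nabla,\dot S_m a]\,?$ or simply bound $\|\dot S_m a\,\Delta u\|$ crudely: since $\dot S_m a\in L^\infty$ with $\|\dot S_m a\|_{L^\infty}\lesssim\|a\|_{L^\infty}$ and $\|\nabla\dot S_m a\|_{L^\infty}\lesssim 2^m\|a\|_{\dot B^{2/\lambda}_{\lambda,\infty}}$ (Bernstein), we get the $2^m\|\nabla u\|_{L^2}\|a\|_{\dot B^{2/\lambda}_{\lambda,\infty}\cap L^\infty}$ contribution; one writes $\int \dot S_m a\,\Delta u\cdot\nabla\Pi = -\int \nabla\dot S_m a\otimes\nabla u:\nabla\Pi - \int \dot S_m a\,\nabla u:\nabla^2\Pi$, and handles $\nabla^2\Pi$ by using the equation again / elliptic regularity, or better, keep $\Delta u$ and use the low-frequency cutoff to trade $\|\dot S_m a\,\Delta u\|_{L^2}$ against $\|\nabla u\|_{L^2}$ only at the price of $2^m$. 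Finally $\int(u\cdot\nabla u)\cdot\nabla\Pi\le\|u\cdot\nabla u\|_{L^{?}}\|\nabla\Pi\|_{L^2}$; in 2-D, $\|u\otimes u\|$ estimates via Gagliardo–Nirenberg give $\|u\cdot\nabla u\|_{\dot H^{-1}}\lesssim\|u\|_{L^4}\|\nabla u\|_{L^2}\lesssim\|\nabla u\|_{L^2}^2$ (using $\|u\|_{L^4}^2\lesssim\|u\|_{L^2}\|\nabla u\|_{L^2}$ and then absorbing, or directly $\|u\otimes u\|_{L^2}\lesssim\|u\|_{L^4}^2$ together with the $\dive$ structure), yielding the last term $\|\nabla u\|_{L^2}^2$.

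Collecting, dividing through by $\|\nabla\Pi\|_{L^2}$ and using $1+a\ge c$, one arrives at \eqref{2.18}. The main obstacle I anticipate is the middle term: controlling the contribution of $\dot S_m a\,\Delta u$ with only $\|\nabla u\|_{L^2}$ (not $\|u\|_{\dot B^{1+2/p}_{p,1}}$) on the right, while keeping the $a$-dependence in the weak norm $\dot B^{2/\lambda}_{\lambda,\infty}\cap L^\infty$ — this forces the factor $2^m$ and requires carefully integrating by parts and exploiting that the frequencies of $\dot S_m a$ are $\lesssim 2^m$, so that $\|\dot S_m a\,\Delta u\|$ can be re-expressed through $\|\nabla u\|_{L^2}$ times $2^m$ times a low-order norm of $a$. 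The splitting parameter $m$ is then left free, to be optimized later (in Section \ref{sect-Lip}) by choosing it so that $\|a-\dot S_m a\|_{\dot B^{2/\lambda}_{\lambda,2}}$ is small — this is the standard device for handling a density that is merely bounded below without any smallness assumption, in the spirit of \cite{A-G-Z-2,AG2021}.
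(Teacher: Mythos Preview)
Your overall strategy coincides with the paper's: test \eqref{S2eq2} against $\Pi$, use $1+a\ge c$ on the left, split $a=(a-\dot S_m a)+\dot S_m a$ on the right, and estimate the three source terms separately. The treatment of the $\dot S_m a$ piece and of $u\cdot\nabla u$ is right in spirit (though your justification of $\|u\cdot\nabla u\|_{\dot H^{-1}}\lesssim\|\nabla u\|_{L^2}^2$ is garbled --- your Gagliardo--Nirenberg argument only gives $\|u\|_{L^2}\|\nabla u\|_{L^2}$; the correct object is $\|\dive(u\cdot\nabla u)\|_{\dot H^{-1}}$, for which one uses $\dive(u\cdot\nabla u)=\partial_iu_j\partial_ju_i$ and a Bony decomposition, as the paper does).

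The genuine gap is in the $(a-\dot S_m a)\Delta u$ term. The product law you invoke,
\[
\dot B^{2/\lambda}_{\lambda,2}\cdot\dot B^{-1+2/p}_{p,1}\hookrightarrow L^2,
\]
fails for the paraproduct piece $T_{a-\dot S_m a}\Delta u$: the function $a-\dot S_m a\in\dot B^{2/\lambda}_{\lambda,2}$ sits exactly at the $L^\infty$ scaling but with third index $2$, hence does \emph{not} embed into $L^\infty$, and no Sobolev embedding brings it to a Besov space of strictly negative regularity --- so neither alternative in Proposition~\ref{prop2.2} is available. For $p>2$ one also lacks $\Delta u\in L^2$, so there is no slack anywhere. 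The paper circumvents this by keeping the divergence and using $\dive u=0$: one writes
\[
\dive\bigl(T_{a-\dot S_m a}\Delta u\bigr)=T_{\nabla(a-\dot S_m a)}\Delta u
\]
(since $\dive\Delta u=0$), and now $\nabla(a-\dot S_m a)\in\dot B^{2/\lambda-1}_{\lambda,2}\hookrightarrow\dot B^{-2/p}_{\frac{2p}{p-2},\infty}$ has strictly negative regularity, so Proposition~\ref{prop2.2} applies and gives the $\dot H^{-1}$ bound, to be paired against $\Pi\in\dot H^1$. This use of the divergence structure is the missing ingredient in your argument.
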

\begin{proof} In view of \eqref{S2eq2}, for any $m \in \mathbb{Z}^+$, we write
\begin{equation*}
\begin{split}
\dive((1+a)\nabla\Pi)&=\dive((a-\dot{S}_m a)\Delta u)+\dive(\dot{S}_m a\Delta u)-\dive(u\cdot\nabla u).
\end{split}
\end{equation*}
By applying Bony's decomposition \eqref{bony} and $\dive u=0,$ we obtain
\begin{equation}\label{2.19}
\begin{split}
\dive((1+a)\nabla\Pi)=&\dive T_{\Delta u}(a-\dot{S}_m a)+\dive R(\Delta u, a-\dot{S}_m a)+T_{\nabla(a-\dot{S}_m a)} \Delta u\\
&\quad+\dive(\dot{S}_m a\Delta u)-\dive(u\cdot\nabla u).
\end{split}
\end{equation}
Then we get, by taking $L^2$ inner product of \eqref{2.19} with $\Pi$ and using  $1+a\geq c$, that
\beq\label{2.20}
\begin{split}
c\|\nabla\Pi\|_{L^2}^2\leq&\|\nabla\Pi\|_{L^2}
\Bigl(\| T_{\Delta u}(a-\dot{S}_m a)\|_{L^2}+
\|R(\Delta u, a-\dot{S}_m a)\|_{L^{2}}\\
&+\|T_{\nabla(a-\dot{S}_m a)} \Delta u\|_{\dot{H}^{-1}}+
\|\dive(u\cdot\nabla)u\|_{\dot{H}^{-1}}+\|\dive(\dot{S}_ma\Delta u)\|_{\dot{H}^{-1}}\Bigr).
\end{split}\eeq
It follows from the law of product in Besov spaces, Proposition \ref{prop2.2},  and $\frac{1}{2}<\frac{1}{p}+\frac{1}{\lambda}\le1$ that
\begin{align*}
&\|T_{\Delta u}(a-\dot{S}_m a)\|_{L^2}
\lesssim
\|\Delta u\|_{\dot{B}_{p,1}^{-1+\frac{2}{p}}}
\|a-\dot{S}_m a\|_{\dot B^{1-\frac{2}{p}}_{\frac{2p}{p-2},2}}
\lesssim
\|u\|_{\dot{B}_{p,1}^{1+\frac{2}{p}}}
\|a-\dot{S}_m a\|_{\dot B^{\frac{2}{\lambda}}_{\lambda,2}},\\
&\|R(\Delta u, a-\dot{S}_m a)\|_{L^{2}}
\lesssim
\|R(\Delta u, a-\dot{S}_m a)
\|_{\dot B^{\frac{2}{p}+\frac{2}{\lambda}-1}_{\frac{p\lambda}{p+\lambda},2}}
\lesssim
\|u\|_{\dot{B}_{p,1}^{1+\frac{2}{p}}}
\|a-\dot{S}_m a\|_{\dot B^{\frac{2}{\lambda}}_{\lambda,\infty}},\\
&\|T_{\nabla(a-\dot{S}_m a)} \Delta u\|_{\dot{H}^{-1}}
\lesssim
\|\Delta u\|_{\dot{B}_{p,2}^{-1+\frac{2}{p}}}
\|\nabla(a-\dot{S}_m a)\|_{\dot B^{-\frac{2}{p}}_{\frac{2p}{p-2},\infty}}
\lesssim
\|u\|_{\dot{B}_{p,1}^{1+\frac{2}{p}}}
\|a-\dot{S}_m a\|_{\dot B^{\frac{2}{\lambda}}_{\lambda,\infty}}.
\end{align*}

While
again by using Bony's decomposition \eqref{bony} and $\dive u=0,$ we find
\begin{align*}
\|\dive(\dot{S}_ma\Delta u) \|_{\dot{H}^{-1}}&\lesssim
\|T_{\nabla\dot{S}_m a}\Delta u\|_{\dot{H}^{-1}}
+\|T_{\Delta u}\nabla\dot{S}_m a\|_{\dot{H}^{-1}}
+\|R(\dot{S}_m a,\Delta u)\|_{L^2}\\&\lesssim
\|\nabla\dot{S}_m a\|_{L^\infty}\|\nabla u\|_{L^{2}}
+\|\dot{S}_m a\|_{\dot B^{1+\f2\la}_{\la,\infty}}\|\D u\|_{\dot B^{\f2p-2}_{p,2}}\\
&\lesssim
2^{m}\|\nabla u\|_{L^{2}}
\bigl(\|a\|_{L^\infty}+\|a\|_{\dot B^{\frac{2}{\lambda}}_{\lambda,\infty}}\bigr).
\end{align*}

Finally again due to $\dive u=0,$ we have $\dive(u\cdot\nabla u)=\sum_{i,j=1}^2\p_iu_j\p_ju_i.$ As a result,
it comes out
\beno
\|\dive(u\cdot\nabla u)\|_{\dot{H}^{-1}}\lesssim\|T_{\partial u}\partial u\|_{\dot{H}^{-1}}+\|R(u,\na u)\|_{L^2}\lesssim\|\nabla u\|_{L^2}^2.
\eeno

By
substituting the above estimates  into \eqref{2.20}, we obtain \eqref{2.18}. This completes the proof of Lemma \ref{lem2.4}.
\end{proof}

\subsection{The estimate of the linearized equation}
The goal of  this subsection is to present some {\it a priori} estimates to the linearized equations of \eqref{1.2}.
We start with the following estimates concerning the solution of transport equation.

\begin{prop}\label{prop2.3}
{\sl Let  $m\in\mathbb{Z}$ and  $\lambda\in [1,\infty],$
let the convection velocity $u$ satisfy  $\nabla u\in L_{T}^1(L^\infty)\cap\widetilde L^1_T(\dot H^2)$ and $\dive u=0$. Then for
$a_{0}\in\dot B^{\frac{2}{\lambda}}_{\lambda,2},$  the following equation
\begin{equation}\label{2.30}
\begin{cases}
\partial_t a+u\cdot\nabla a=0,\\
a|_{t=0}=a_0.
\end{cases}
\end{equation} has a unique solution $a\in C([0,T]; \dot B^{\frac{2}{\lambda}}_{\lambda,2})$ so
 that for all $t\,\in(0,T]$,
\begin{align}\label{2.31q}
\|a\|_{\widetilde L^\infty_t(\dot B^{\frac{2}{\lambda}}_{\lambda,2})}
\lesssim
\|a_0\|_{\dot B^{\frac{2}{\lambda}}_{\lambda,2}}
\bigl(1+\|u\|_{\widetilde L^1_t(\dot H^2)})e^{C\|\nabla u\|_{L^1_t(L^\infty)}},
\end{align}
and
\begin{equation}\label{2.32}
\begin{aligned}
\|a-\dot S_ma&\|_{\widetilde L^\infty_t(\dot B^{\frac{2}{\lambda}}_{\lambda,2})}
\lesssim
\Bigl(\sum_{q\geq m}2^{\frac{4}{\lambda}q}
\|\dot\Delta_qa_0\|_{L^{\lambda}}^2\Bigr)^{\frac{1}{2}}\\
&+\|a_0\|_{\dot B^{\frac{2}{\lambda}}_{\lambda,2}}
\bigl(e^{C\|\nabla u\|_{L^1_t(L^\infty)}}-1\bigr)
+\|a_0\|_{\dot B^{\frac{2}{\lambda}}_{\lambda,2}}
\|u\|_{\widetilde L^1_t(\dot H^2)}e^{C\|\nabla u\|_{L^1_t(L^\infty)}}.
\end{aligned}
\end{equation}
}
\end{prop}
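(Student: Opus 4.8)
The plan is to apply a dyadic block $\dot\Delta_q$ to the transport equation \eqref{2.30}, commute it with the convection operator $u\cdot\nabla$, run an $L^\lambda$ energy estimate on each block, and then reassemble in $\ell^2$ with the weight $2^{\frac{2}{\lambda}q}$. Concretely, setting $a_q\eqdefa\dot\Delta_q a$, we have
\begin{equation*}
\partial_t a_q+u\cdot\nabla a_q=-[\dot\Delta_q,u\cdot\nabla]a,
\end{equation*}
and since $\dive u=0$ the transport term does not contribute to $\frac{d}{dt}\|a_q\|_{L^\lambda}$, so we get
\begin{equation*}
\|a_q(t)\|_{L^\lambda}\le\|\dot\Delta_qa_0\|_{L^\lambda}+\int_0^t\|[\dot\Delta_q,u\cdot\nabla]a\|_{L^\lambda}\,dt'.
\end{equation*}
The commutator is handled by a standard Bony-decomposition argument (as in Lemma~\ref{lem2.3} and Lemma~\ref{lem2.5}, but now working entirely in $L^\lambda$ on both factors, splitting off the gradient of $u$): one shows
\begin{equation*}
2^{\frac{2}{\lambda}q}\|[\dot\Delta_q,u\cdot\nabla]a\|_{L^\lambda}\lesssim c_q\Bigl(\|\nabla u\|_{L^\infty}\|a\|_{\dot B^{\frac{2}{\lambda}}_{\lambda,2}}+\|\nabla^2u\|_{L^2}\|a\|_{\dot B^{\frac{2}{\lambda}}_{\lambda,2}}\Bigr)
\end{equation*}
with $\{c_q\}\in\ell^2$ of unit norm; the $\dot H^2$ piece of $u$ is needed precisely to control the low-frequency part $\dot S_{k-1}\nabla u$ in $L^\infty$ via Bernstein when $\nabla u$ is only measured in $L^2$ at low frequencies, which is where the factor $1+\|u\|_{\widetilde L^1_t(\dot H^2)}$ in \eqref{2.31q} originates. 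Taking the $\widetilde L^\infty_t$ norm, multiplying by $2^{\frac{2}{\lambda}q}$, summing in $\ell^2(\Z)$ and invoking Gronwall's inequality yields \eqref{2.31q}. Existence and uniqueness of $a\in C([0,T];\dot B^{\frac{2}{\lambda}}_{\lambda,2})$ follows from these a priori bounds together with the regularity of the flow map generated by $u$ (the standard transport theory in Besov spaces from \cite{BCD}), using $\nabla u\in L^1_T(L^\infty)$ to get the Lipschitz flow and then the continuity in time from the equation.

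For \eqref{2.32}, the idea is to split $a_0=\dot S_ma_0+(a_0-\dot S_ma_0)$ and propagate each piece: let $\bar a$ solve \eqref{2.30} with data $\dot S_ma_0$ and $\tilde a$ solve it with data $a_0-\dot S_ma_0$, so $a=\bar a+\tilde a$ by linearity and uniqueness. Then
\begin{equation*}
a-\dot S_ma=\tilde a+(\bar a-\dot S_m\bar a)-\dot S_m\tilde a,
\end{equation*}
and we estimate the three contributions separately. For $\tilde a$ we apply \eqref{2.31q} to data $a_0-\dot S_ma_0$, whose $\dot B^{\frac{2}{\lambda}}_{\lambda,2}$ norm is exactly $\bigl(\sum_{q\ge m-1}2^{\frac{4}{\lambda}q}\|\dot\Delta_qa_0\|_{L^\lambda}^2\bigr)^{1/2}$, giving the first term on the right of \eqref{2.32} together with the factor $\bigl(1+\|u\|_{\widetilde L^1_t(\dot H^2)}\bigr)e^{C\|\nabla u\|_{L^1_t(L^\infty)}}$ — but one has to be careful here: to land exactly on the stated right-hand side one should absorb the $e^{C\|\nabla u\|}$ into the "$-1$" and "$\|u\|_{\widetilde L^1_t(\dot H^2)}$" terms by bounding $\|\tilde a\|\le\|a_0-\dot S_ma_0\|+\|a_0-\dot S_ma_0\|(e^{C\|\nabla u\|}-1)+\cdots$. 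For the difference $\bar a-\dot S_m\bar a$, one writes $\partial_t(\bar a-\dot S_m\bar a)=-(\bar a-\dot S_m\bar a)$-transport terms plus commutator $[\dot S_m,u\cdot\nabla]\bar a$, and uses that $\bar a-\dot S_m\bar a$ vanishes at $t=0$ (since $\dot S_m\dot S_ma_0=\dot S_ma_0$), so its growth is entirely driven by the convection over time $[0,t]$; this produces the factors $(e^{C\|\nabla u\|_{L^1_t(L^\infty)}}-1)$ and $\|u\|_{\widetilde L^1_t(\dot H^2)}e^{C\|\nabla u\|_{L^1_t(L^\infty)}}$ after multiplying $\|\bar a\|$ by the flow estimate $\|a_0\|_{\dot B^{\frac{2}{\lambda}}_{\lambda,2}}$ (note $\|\dot S_ma_0\|_{\dot B^{\frac{2}{\lambda}}_{\lambda,2}}\le\|a_0\|_{\dot B^{\frac{2}{\lambda}}_{\lambda,2}}$). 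The term $\dot S_m\tilde a$ is bounded by $\|\tilde a\|$ and thus absorbed into the first contribution.

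The main obstacle, in my view, is the bookkeeping in \eqref{2.32}: getting the precise three-term right-hand side requires carefully isolating "the part that would vanish if there were no transport" from "the part already concentrated at frequencies $\ge m$ initially", rather than just crudely bounding everything by $\|a_0\|_{\dot B^{\frac{2}{\lambda}}_{\lambda,2}}e^{C\|\nabla u\|}$. The cleanest route is to iterate the Duhamel-type identity for $\dot\Delta_q(a-\dot S_ma)$ along the flow of $u$: writing $\psi_t$ for the flow, $\dot\Delta_q a(t)=\dot\Delta_q(a_0\circ\psi_t^{-1})$, and the frequency localization is only \emph{approximately} preserved by the flow, the defect being controlled by $\|\nabla u\|_{L^1_t(L^\infty)}$ and $\|u\|_{\widetilde L^1_t(\dot H^2)}$; this is exactly the quantitative "frequency does not spread too fast under a Lipschitz flow" estimate that underlies the commutator bounds. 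A secondary technical point is the role of the hypothesis $\lambda\in[1,\infty]$ being allowed to include $\infty$: for $\lambda=\infty$ the $\ell^2$ summation is in the endpoint index $2$ rather than matching $\lambda$, but the commutator estimates above are stated with the $\dot B^{\cdot}_{\cdot,2}$ target throughout, so no separate argument is needed — one simply never uses $\lambda<\infty$ except through Bernstein's inequality, which holds for all $\lambda\in[1,\infty]$.
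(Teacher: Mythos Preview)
Your global commutator bound
\[
2^{\frac{2}{\lambda}q}\|[\dot\Delta_q,u\cdot\nabla]a\|_{L^\lambda}\lesssim c_q\bigl(\|\nabla u\|_{L^\infty}+\|\nabla^2u\|_{L^2}\bigr)\|a\|_{\dot B^{\frac{2}{\lambda}}_{\lambda,2}}
\]
is plausible (and can indeed be checked term by term via Bony's decomposition), but it does \emph{not} give \eqref{2.31q} as stated. Feeding it into Gronwall yields
\[
\|a\|_{\widetilde L^\infty_t(\dot B^{\frac{2}{\lambda}}_{\lambda,2})}\lesssim \|a_0\|_{\dot B^{\frac{2}{\lambda}}_{\lambda,2}}\,e^{C\bigl(\|\nabla u\|_{L^1_t(L^\infty)}+\|u\|_{L^1_t(\dot H^2)}\bigr)},
\]
with $\|u\|_{\dot H^2}$ in the \emph{exponent}, whereas \eqref{2.31q} has it as a \emph{multiplicative} factor $1+\|u\|_{\widetilde L^1_t(\dot H^2)}$. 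These are genuinely different bounds. Your explanation of the role of $\dot H^2$ is also off: $\nabla u\in L^\infty$ is assumed, so there is no difficulty with $\dot S_{k-1}\nabla u$; the $\dot H^2$ contribution arises from a paraproduct piece of type $T_{\nabla a}\partial u$, where one needs the $c_k$ decay of $\|\dot\Delta_k u\|$ for $\ell^2$ summability.

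The paper proceeds differently. It splits into cases according to whether $\frac{2}{\lambda}$ lies in $]0,1[$, $[1,2[$, or equals $2$. For $\lambda>2$ the classical commutator estimate with only $\|\nabla u\|_{L^\infty}$ suffices and Gronwall closes directly. For $1<\lambda\le 2$ one differentiates the equation once (twice for $\lambda=1$), which brings the Besov index below $1$ so the standard commutator estimate applies, and treats the new right-hand side $\partial_ju\cdot\nabla a$ by Bony's decomposition. The key step that produces a multiplicative rather than exponential $\dot H^2$ dependence is a \emph{two-tier} argument: the dangerous piece $T_{\nabla a}\partial_j u$ is bounded by $\|u\|_{\dot H^2}\|\nabla a\|_{\dot B^0_{2,\infty}}$, and $\|\nabla a\|_{\dot B^0_{2,\infty}}$ is estimated \emph{separately} by transporting in the weaker space $\dot B^0_{2,\infty}$ (where only $\|\nabla u\|_{L^\infty}$ is needed), giving $\|\nabla a\|_{\dot B^0_{2,\infty}}\lesssim\|a_0\|_{\dot B^{\frac{2}{\lambda}}_{\lambda,2}}e^{C\|\nabla u\|_{L^1_t(L^\infty)}}$. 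Thus the $\dot H^2$-coupled term is multiplied by $\|a_0\|$, not $\|a(t)\|$, and Gronwall on the remaining $\|\nabla u\|_{L^\infty}$ term yields exactly \eqref{2.31q}.

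For \eqref{2.32} your data-splitting is both more complicated than necessary and contains an error: $\dot S_m\dot S_m\neq\dot S_m$ (the dyadic blocks have overlapping supports), so $\bar a-\dot S_m\bar a$ does \emph{not} vanish at $t=0$. The paper's argument is simpler: it reuses the block-wise inequality
\[
\|\dot\Delta_q a\|_{L^\infty_t(L^\lambda)}\le\|\dot\Delta_q a_0\|_{L^\lambda}+C2^{-\frac{2}{\lambda}q}\int_0^t c_q(\tau)\bigl(\cdots\bigr)\,d\tau,
\]
sums \emph{only over} $q\ge m$ (which is exactly $\|a-\dot S_m a\|_{\widetilde L^\infty_t(\dot B^{\frac{2}{\lambda}}_{\lambda,2})}$), and then plugs the already-proved bound \eqref{2.31q} into the integral term. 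No linear decomposition of the data is needed.
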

\begin{proof} The proof of the above proposition is similar  to that of Proposition 2.3 in \cite{AG2021}, for
completeness, we just outline it here.
We divide the proof into the following three cases: $\lambda>2$, $1<\lambda\leq2$, and $\lambda=1$.

\no {\bf Case 1: $\lambda>2$. }  By applying the dyadic operator $\dot\Delta_q$ to the transport equation of \eqref{2.30}, we write
$$
\partial_t\dot\Delta_qa+u\cdot\nabla\dot\Delta_qa=-[\dot\Delta_q,u\cdot\nabla]a,
$$
from which and $\dive u=0,$ we infer
\begin{equation}\label{est-transport-commu-1}
\|\dot\Delta_qa\|_{L^\infty_t(L^\lambda)}
\leq
\|\dot\Delta_qa_0\|_{L^\lambda}
+\int_0^t\|[\dot\Delta_q,u\cdot\nabla]a\|_{L^\lambda}\,d\tau.
\end{equation}
It follows from classical commutator's estimate (see Lemma 2.100 and Remark 2.102 in \cite{BCD})  that
\begin{equation*}
\|[\dot\Delta_q,u\cdot\nabla]a(t)\|_{L^\lambda} \lesssim c_{q}(t)2^{-\frac{2}{\lambda}q}\|\nabla\,u\|_{L^\infty} \|a\|_{\dot B^{\frac{2}{\lambda}}_{\lambda,2}},
\end{equation*}
 which along with \eqref{est-transport-commu-1} ensures that
\begin{equation}\label{est-transport-commu-3}
\|\dot\Delta_qa\|_{L^\infty_t(L^\lambda)}
\leq
\|\dot\Delta_qa_0\|_{L^\lambda}
+C2^{-\frac{2}{\lambda}q}\int_0^tc_{q}(\tau)\|\nabla\,u\|_{L^\infty} \|a\|_{\dot B^{\frac{2}{\lambda}}_{\lambda,2}}\,d\tau.
\end{equation}
Hence, one has
\begin{equation*}
\|a\|_{\widetilde L^\infty_t(\dot B^{\frac{2}{\lambda}}_{\lambda,2})}
\leq
\|a_0\|_{\dot B^{\frac{2}{\lambda}}_{\lambda,2}}+C\int_0^t \|\nabla\,u\|_{L^\infty} \|a\|_{\dot B^{\frac{2}{\lambda}}_{\lambda,2}}\,d\tau.
\end{equation*}
from which, we get, by applying Gronwall's inequality, that
\begin{equation}\label{est-transport-commu-4}
\|a\|_{\widetilde L^\infty_t(\dot B^{\frac{2}{\lambda}}_{\lambda,2})}
\leq
\|a_0\|_{\dot B^{\frac{2}{\lambda}}_{\lambda,2}}
e^{C\|\nabla u\|_{L^1_t(L^\infty)}}.
\end{equation}

On the other hand, considering $q \geq m$ in \eqref{est-transport-commu-3}, we infer
\begin{equation}\label{est-transport-commu-5}
\|a-\dot S_ma\|_{\widetilde L^\infty_t(\dot B^{\frac{2}{\lambda}}_{\lambda,2})}
\le
\Bigl(\sum_{q\geq m}2^{\frac{4}{\lambda}q}\|\dot\Delta_qa_0\|_{L^{\lambda}}^2\Bigr)^{\frac{1}{2}}+C\int_0^t \|\nabla\,u\|_{L^\infty} \|a\|_{\dot B^{\frac{2}{\lambda}}_{\lambda,2}}\,d\tau.
\end{equation}
By plugging \eqref{est-transport-commu-4} into \eqref{est-transport-commu-5}, we obtain
\begin{equation*}
\|a-\dot S_ma\|_{\widetilde L^\infty_t(\dot B^{\frac{2}{\lambda}}_{\lambda,2})}
\le
\Bigl(\sum_{q\geq m}2^{\frac{4}{\lambda}q}\|\dot\Delta_qa_0\|_{L^{\lambda}}^2\Bigr)^{\frac{1}{2}}
+\|a_0\|_{\dot B^{\frac{2}{\lambda}}_{\lambda,2}}
\Bigl(e^{C\|\nabla u\|_{L^1_t(L^\infty)}}-1\Bigr).
\end{equation*}

\no {\bf Case 2: $1<\lambda\leq2$. } It is easy to observe that
 $\partial_ja$ satisfies
\begin{align}\label{2.33}
\partial_t\partial_j a+u\cdot\nabla \partial_j a=-\partial_j u\cdot\nabla a.
\end{align}
Applying the operator $\dot{\Delta}_q$ to \eqref{2.33} gives
\begin{align*}
\partial_t\dot{\Delta}_q\partial_j a+u\cdot\nabla \partial_j\dot{\Delta}_q a=-\dot{\Delta}_q(\partial_j u\cdot\nabla a)
-[\dot{\Delta}_q, u\cdot\nabla]\partial_j a,
\end{align*}
from which and $\dive u=0,$ we infer
\begin{align*}
\|\dot{\Delta}_q\partial_j a\|_{L^\lambda}
\leq
\|\dot{\Delta}_q\partial_j a_0\|_{L^\lambda}
+\int_0^t\|\dot{\Delta}_q(\partial_j u\cdot\nabla a)\|_{L^\lambda}\mathrm{d}\tau
+\int_{0}^t\|[\dot{\Delta}_q, u\cdot\nabla]\partial_j a\|_{L^\lambda}\mathrm{d}\tau.
\end{align*}
Since $0\le\frac{2}{\lambda}-1<1$,  we deduce from classical commutator's estimate (see Lemma 2.100 and Remark 2.102 in \cite{BCD})
 that
\beq\label{S2eq5}
\|a\|_{\widetilde L^\infty_t(\dot B^{\frac{2}{\lambda}}_{\lambda,2})}
\leq
\|a_0\|_{\dot B^{\frac{2}{\lambda}}_{\lambda,2}}+\int_0^t\|\partial_j u\cdot\nabla a\|_{\dot B^{\frac{2}{\lambda}-1}_{\lambda,2}}\,d\tau
+C\int_0^t\|a\|_{\dot B^{\frac{2}{\lambda}}_{\lambda,2}}\|\nabla u\|_{L^\infty}\,d\tau.
\eeq
Notice that
\begin{align*}
\|\partial_j u\cdot\nabla a\|_{\dot B^{\frac{2}{\lambda}-1}_{\lambda,2}}
&\lesssim
\|T_{\partial_ju}\nabla a\|_{\dot B^{\frac{2}{\lambda}-1}_{\lambda,2}}
+\|R(\partial_ju,a)\|_{\dot B^{\frac{2}{\lambda}}_{\lambda,2}}
+\|T_{\nabla a}\partial_ju\|_{\dot B^{\frac{2}{\lambda}-1}_{\lambda,2}}
\\&
\lesssim
\|a\|_{\dot B^{\frac{2}{\lambda}}_{\lambda,2}}\|\nabla u\|_{L^\infty}
+\|u\|_{\dot H^2}
\|\nabla a\|_{\dot B^{-2+\frac{2}{\lambda}}_{\frac{2\lambda}
{2-\lambda},\infty}},
\end{align*}
and
\beno
\|\nabla a\|_{L^\infty_t(\dot B^{-2+\frac{2}{\lambda}}_{\frac{2\lambda}
{2-\lambda},\infty})}\lesssim \|\nabla a\|_{L^\infty_t(\dot B^0_{2,\infty})}\lesssim\|\nabla a_0\|_{\dot B^{0}_{2,\infty}}
e^{C\|\nabla u\|_{L^1_t(L^\infty)}}\lesssim \|a_0\|_{\dot B^{\frac{2}{\lambda}}_{\lambda,2}}e^{C\|\nabla u\|_{L^1_t(L^\infty)}},
\eeno
we infer
\begin{align*}
\|\partial_j u\cdot\nabla a\|_{\dot B^{\frac{2}{\lambda}-1}_{\lambda,2}}\le
\|a\|_{\dot B^{\frac{2}{\lambda}}_{\lambda,2}}\|\nabla u\|_{L^\infty}
+C\|a_0\|_{\dot B^{\frac{2}{\lambda}}_{\lambda,2}}\|u\|_{\dot H^2}
e^{C\|\nabla u\|_{L^1_t(L^\infty)}}.
\end{align*}
By inserting the above estimate into \eqref{S2eq5} and then applying Gronwall's inequality, we obtain \eqref{2.31q}.
Exactly along the same line, we deduce \eqref{2.32}.

\no {\bf Case 3: $\lambda=1$. }  Taking one more space derivative the equation \eqref{2.33} gives
$$
\partial_t\partial^2_{ij} a+u\cdot\nabla \partial^2_{ij} a
=-\partial_i u\cdot\nabla \partial_j a
-\partial_j u\cdot\nabla \partial_i a
-\partial^2_{ij}u\cdot\nabla a,
$$
from which, we deduce from classical commutator's estimate (see Lemma 2.100 and Remark 2.102 in \cite{BCD}), Bony's decomposition
and $\dv\,u=0$, that
\begin{align*}
\|\partial_{ij}^2a\|_{\widetilde L^\infty_t(\dot B^0_{1,2})}
&\le
\|\partial_{ij}^2a_0\|_{\dot B^0_{1,2}}
+C\int_0^t\|\nabla u\|_{L^\infty}
\|a\|_{\dot B^2_{1,2}}d\tau
+\|T_{\partial\nabla a}\partial u\|_{\widetilde L^\infty_t(\dot B^0_{1,2})}
+\|T_{\nabla a}\partial^2 u\|_{\widetilde L^1_t(\dot B^0_{1,2})}
\\&
\le
\|\partial_{ij}^2a_0\|_{\dot B^0_{1,2}}
+C\int_0^t\big(\|\nabla u\|_{L^\infty}
\|a\|_{\dot B^2_{1,2}}
+\|\nabla a\|_{L^2}\|u\|_{\dot H^2}\bigr)\,d\tau.
\end{align*}
Yet notice that
\beno
\|\nabla a\|_{\widetilde L^\infty_t(L^2)}\lesssim \|a_0\|_{\dot H^1}
e^{C\|\nabla u\|_{L^1_t(L^\infty)}}\lesssim \|a_0\|_{\dot B^{2}_{1,2}}e^{C\|\nabla u\|_{L^1_t(L^\infty)}},
\eeno
we find
\begin{align*}
\|\partial_{ij}^2a\|_{\widetilde L^\infty_t(\dot B^0_{1,2})}
\le
\|\partial_{ij}^2a_0\|_{\dot B^0_{1,2}}
+C\int_0^t\big(\|\nabla u\|_{L^\infty}\|a\|_{\dot B^2_{1,2}}
+\|a_0\|_{\dot B^{2}_{1,2}}
\|u\|_{\dot H^2}
e^{C\|\nabla u\|_{L^1_\tau(L^\infty)}}\bigr)\,d\tau.
\end{align*}
Applying Gronwall's inequality leads to \eqref{2.31q}. Similar argument yields \eqref{2.32}.
This completes the proof of Proposition \ref{prop2.3}.
\end{proof}

In order to prove the uniqueness part of Theorem \ref{thm1.1}
for $p=2,$ it is necessary for us to study the following Stokes system:
\begin{equation}\label{2.36}
\begin{cases}
&\rho_0\partial_tu-\Delta u+\nabla \Pi=f,\quad (t,x)\in \R^+\times\R^2,\\
&\nabla \cdot u=\dv\, g,\\
&\Delta\Phi=\dv\, g,\\
&u|_{t=0}=u_0.
\end{cases}
\end{equation}
\begin{prop}\label{prop2.4}
{\sl Let $p\in [2,+\infty[$ and $\lambda\in [1,+\infty[$
satisfy $\frac{1}{2}<\frac{1}{p}+\frac{1}{\lambda}\le1.$
 We assume that $1+a_0\eqdefa\f1{\rho_0}\geq \frac{1}{M}>0$ for some positive constant $M$,
$u \in \widetilde{L}^{\infty}_T( \dot{B}^{-1+\frac{2}{p}}_{p,1})
\cap L^1_T( \dot{B}^{1+\frac{2}{p}}_{p,1}),$ and $\grad\Pi\in
 L^1_T( \dot{B}^{-1+\frac{2}{p}}_{p,1})$  solves the system \eqref{2.36} for smooth enough $f$ and $g.$
Then there exists a large enough integer $m_0,$ which depends only on $\|a_0\|_{\dot B^{\f2\lambda}_{\lambda,2}},$ so that for any $m\geq m_0,$  one has
\begin{equation}\label{estimate-uniqueness-1q}
\begin{aligned}
&\|u\|_{\widetilde{L}_t^\infty(\dot{B}_{p,1}^{-1+\frac{2}{p}})}
+\|(u_t,\nabla^2u,\nabla\Pi)\|_{{L}_t^1(\dot{B}_{p,1}^{-1+\frac{2}{p}})} \lesssim \|\nabla\Phi\|_{\widetilde{L}_t^\infty(\dot{B}_{p,1}^{-1+\frac{2}{p}})
\cap {L}_t^1(\dot{B}_{p,1}^{1+\frac{2}{p}})}\\
&\qquad+
\bigl(2^m\sqrt{t}+2^{2m}t\bigr) \Bigl(\|(u_0,\nabla \Phi_0)\|_{\dot B^{-1+\frac{2}{p}}_{p,1}\cap\,L^2}+\|(f,\,\nabla \dv\, g,\,\nabla\Phi_t)\|_{L_t^1(\dot B^{-1+\frac{2}{p}}_{p,1}\cap L^2)}\Bigr).
\end{aligned}
\end{equation}
}
\end{prop}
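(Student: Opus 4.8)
The plan is to straighten the time–derivative coefficient in \eqref{2.36} so as to reduce it to a heat equation with ``bad'' right–hand side, to run the usual maximal $L^1_t$–regularity estimate in the Chemin--Lerner space $\widetilde L^\infty_t(\dot B^{-1+\frac2p}_{p,1})\cap L^1_t(\dot B^{1+\frac2p}_{p,1})$, and to absorb/control the error terms produced by the variable density via the commutator estimates of Lemmas \ref{lem2.5}--\ref{lem2.7}, a frequency splitting of $a_0$, and the basic $L^2$ energy estimate.

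\emph{Step 1: Reduction.} Since $\Delta\Phi=\dv g=\dv u$, the field $w\eqdefa u-\nabla\Phi$ is divergence free, and substituting $u=w+\nabla\Phi$ together with $\rho_0^{-1}=1+a_0$ into \eqref{2.36} shows that $(w,\nabla\Pi)$ solves $\rho_0\partial_tw-\Delta w+\nabla\Pi=f-\rho_0\nabla\Phi_t+\nabla\dv g$ with $\dv w=0$ and $w|_{t=0}=u_0-\nabla\Phi_0$. Multiplying by $1+a_0$ and applying the Leray projector $\mathbb{P}$ (using $\mathbb{P}\nabla\Phi_t=\mathbb{P}\nabla\dv g=\mathbb{P}\nabla\Pi=0$ and $\mathbb{P}\Delta w=\Delta w$) yields
\begin{equation*}
\partial_t w-\Delta w=\mathbb{P}\bigl[(1+a_0)f\bigr]+\mathbb{P}\bigl[a_0\nabla\dv g\bigr]+\mathbb{P}\bigl[a_0\Delta w\bigr]-\mathbb{P}\bigl[a_0\nabla\Pi\bigr].
\end{equation*}
The quantities $\nabla\Pi$, $\nabla^2u$, $\partial_tu$ will be recovered at the end from $u=w+\nabla\Phi$, from the identity $\nabla\Pi=(I-\mathbb{P})\bigl[(1+a_0)(f-\rho_0\nabla\Phi_t+\nabla\dv g)+a_0\Delta w-a_0\nabla\Pi\bigr]$ and from the equation itself; equivalently, one may use the elliptic equation $\dv((1+a_0)\nabla\Pi)=\dv((1+a_0)f)+\dv((1+a_0)\Delta u)-\partial_t\dv g$ together with the coercivity $1+a_0\ge 1/M$, exactly as in Lemma \ref{lem2.4}.

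\emph{Step 2: Two a priori estimates.} First, testing the Stokes system for $w$ with $w$ and using that $\rho_0$ is \emph{time–independent} and bounded above and below, one gets the energy bound
\begin{equation*}
\|w\|_{L^\infty_t(L^2)}+\|\nabla w\|_{L^2_t(L^2)}\lesssim \|u_0-\nabla\Phi_0\|_{L^2}+\|(f,\nabla\Phi_t,\nabla\dv g)\|_{L^1_t(L^2)},
\end{equation*}
which in particular, via Cauchy--Schwarz in time, controls $\|\nabla w\|_{L^1_t(L^2)}$ by $t^{1/2}$ times the same right–hand side. Second, the classical maximal regularity for the heat semigroup in Chemin--Lerner spaces reduces the estimate of $\|w\|_{\widetilde L^\infty_t(\dot B^{-1+\frac2p}_{p,1})}+\|(\partial_tw,\nabla^2w)\|_{L^1_t(\dot B^{-1+\frac2p}_{p,1})}$ to bounding $\|u_0-\nabla\Phi_0\|_{\dot B^{-1+\frac2p}_{p,1}}$ plus the $L^1_t(\dot B^{-1+\frac2p}_{p,1})$–norm of the four source terms above, together with $\|\nabla\Phi\|_{\widetilde L^\infty_t(\dot B^{-1+\frac2p}_{p,1})\cap L^1_t(\dot B^{1+\frac2p}_{p,1})}$ (which reappears when we pass from $w$ back to $u$).

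\emph{Step 3: The source terms and conclusion.} Split $a_0=\dot S_m a_0+(a_0-\dot S_m a_0)$. The linear–in–data contributions $\mathbb{P}[(1+a_0)f]$ and $\mathbb{P}[a_0\nabla\dv g]$ are handled by the product laws of Proposition \ref{prop2.2} and by Lemma \ref{lem2.5} (applied to $\dot S_m a_0$ after a Bernstein bound), which accounts for the data terms $\|(f,\nabla\dv g)\|_{L^1_t(\dot B^{-1+\frac2p}_{p,1}\cap L^2)}$ carrying the weight $2^m\sqrt t+2^{2m}t$. For the feedback terms one exploits that $\mathbb{P}\nabla\Pi=0$, so $\dot\Delta_q\mathbb{P}\bigl((a_0-\dot S_m a_0)\nabla\Pi\bigr)=[\dot\Delta_q\mathbb{P},\,a_0-\dot S_m a_0]\nabla\Pi$, which by Lemma \ref{lem2.6} is $\lesssim\|a_0-\dot S_m a_0\|_{\dot B^{2/\lambda}_{\lambda,\infty}}\|\nabla\Pi\|_{\dot B^{-1+\frac2p}_{p,1}}$, and likewise $\mathbb{P}[(a_0-\dot S_m a_0)\Delta w]\lesssim\|a_0-\dot S_m a_0\|_{\dot B^{2/\lambda}_{\lambda,2}}\|w\|_{\dot B^{1+\frac2p}_{p,1}}$ via Lemmas \ref{lem2.6}--\ref{lem2.7} and the paraproduct estimates. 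Choosing $m_0$ so large, depending only on $\|a_0\|_{\dot B^{2/\lambda}_{\lambda,2}}$, that $\|a_0-\dot S_{m_0}a_0\|_{\dot B^{2/\lambda}_{\lambda,2}}$ is sufficiently small, these pieces are absorbed by the left–hand side for every $m\ge m_0$. Finally, the low–frequency pieces $\mathbb{P}[\dot S_m a_0\Delta w]$ and $\mathbb{P}[\dot S_m a_0\nabla\Pi]$ are treated through the commutator $[\dot\Delta_q\mathbb{P},\dot S_m a_0]\,\cdot$ of Lemma \ref{lem2.7}, which produces $2^m\|a_0\|_{\dot B^{2/\lambda}_{\lambda,2}}\bigl(\|\nabla w\|_{L^2}+\|w\|_{L^2}^{1/2}\|w\|_{\dot B^{1+\frac2p}_{p,1}}^{1/2}\bigr)$, plus a diagonal remainder $\dot S_m a_0\,\Delta\dot\Delta_q w$ on which one moves the two derivatives onto $\dot S_m a_0$ by Bernstein (Lemma \ref{lem2.1}), at the cost of $2^{2m}$, keeping $w$ in a low norm. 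Integrating in time, invoking Step 2 and using Young's inequality to absorb the residual $\|w\|_{L^1_t(\dot B^{1+\frac2p}_{p,1})}$ against the gained power of $t$, these yield exactly $\bigl(2^m\sqrt t+2^{2m}t\bigr)$ times the data norms; collecting everything, fixing $m=m_0$, and recovering $\nabla\Pi,\nabla^2u,\partial_tu$ from Step 1 gives \eqref{estimate-uniqueness-1q}.

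\emph{Main obstacle.} The delicate point is the control of the feedback of the variable density on $(w,\nabla\Pi)$: since $\|\dot S_m a_0\|_{L^\infty}\thicksim\|a_0\|_{L^\infty}$ is \emph{not} small, the low–frequency part of $a_0$ cannot be absorbed on a fixed time interval, so one is forced to split the density at the level $m_0$ (high frequencies handled purely by the commutator Lemmas \ref{lem2.5}--\ref{lem2.7}) and to pay for the low–frequency part with an explicit $2^m$/$2^{2m}$ and a power of $t$, the latter being supplied only by the $L^2$ energy estimate. It is this interlocking of parabolic maximal regularity with the energy estimate — rather than a Lagrangian change of coordinates as in \cite{DM1} — that makes the argument work without any smallness of the density variation, and that is precisely why the data enters \eqref{estimate-uniqueness-1q} with the weight $2^m\sqrt t+2^{2m}t$.
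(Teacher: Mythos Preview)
Your reduction in Step~1 and the energy estimate in Step~2 are fine and match the paper.  The gap is in Step~3, in the treatment of the ``feedback'' term $\mathbb{P}[a_0\Delta w]$.

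You write the equation as a \emph{constant--coefficient} heat equation and appeal to Chemin--Lerner maximal regularity, which forces you to estimate $\|\mathbb{P}[a_0\Delta w]\|_{L^1_t(\dot B^{-1+\frac2p}_{p,1})}$ as a source term.  But Lemmas~\ref{lem2.6}--\ref{lem2.7} bound the \emph{commutator} $[\dot\Delta_q\mathbb{P},f]\Delta w$, not the localized product $\dot\Delta_q\mathbb{P}(f\Delta w)$.  Since $\mathbb{P}\Delta w=\Delta w\neq 0$, the two differ by the diagonal piece $f\,\dot\Delta_q\Delta w$.  For $f=a_0-\dot S_m a_0$ this diagonal piece gives
\[
\sum_{q}2^{q(-1+\frac2p)}\|(a_0-\dot S_m a_0)\,\dot\Delta_q\Delta w\|_{L^p}\lesssim \|a_0-\dot S_m a_0\|_{L^\infty}\,\|w\|_{\dot B^{1+\frac2p}_{p,1}},
\]
and $\|a_0-\dot S_m a_0\|_{L^\infty}$ is \emph{not} small under the sole assumption $a_0\in\dot B^{2/\lambda}_{\lambda,2}\cap L^\infty$.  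You only mention the diagonal remainder for the low--frequency part $\dot S_m a_0$, and the sentence ``move the two derivatives onto $\dot S_m a_0$ by Bernstein'' is not a valid manipulation: Bernstein lets you \emph{apply} a derivative to a band--limited function, it does not transfer a derivative from one factor of a product to the other.  So both the high-- and low--frequency diagonal remainders are uncontrolled in your scheme, and the absorption argument fails.

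The paper avoids this precisely by \emph{not} passing to a constant--coefficient heat equation.  After localizing it keeps $(1+a_0)\Delta\dot\Delta_q v$ on the left, multiplies the localized equation by $\rho_0=(1+a_0)^{-1}$ to obtain
\[
\rho_0\,\partial_t\dot\Delta_q v-\Delta\dot\Delta_q v=\rho_0[\mathbb{P}\dot\Delta_q,a_0](\Delta v-\nabla\Pi)+\rho_0\mathbb{P}\dot\Delta_qK,
\]
and then tests with $|\dot\Delta_q v|^{p-2}\dot\Delta_q v$.  The bound $\rho_0\le M$ yields the dissipation $c_pM^{-1}2^{2q}\|\dot\Delta_q v\|_{L^p}^p$, so the diagonal term is absorbed \emph{structurally} rather than estimated.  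Only the genuine commutator $[\mathbb{P}\dot\Delta_q,a_0](\Delta v-\nabla\Pi)$ lands on the right, and it is for \emph{that} object that Lemmas~\ref{lem2.5}--\ref{lem2.7} are tailored.  Once you make this change, the rest of your outline (frequency splitting of $a_0$, use of the $L^2$ energy to supply $2^m\sqrt t$ and $2^{2m}t$, and recovery of $\nabla\Pi,\partial_tu$) goes through and coincides with the paper's proof.
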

\begin{proof}
Let $v\eqdefa u-\nabla\Phi.$ Then in view of \eqref{2.36}, one has
\begin{equation}\label{2.37}
\begin{cases}
\partial_tv-(1+a_0)(\Delta v-\nabla \Pi)=K,\qquad (t,x)\in \R^+\times\R^2,\\
\nabla \cdot v=0,\\
v|_{t=0}=v_0,
\end{cases}
\end{equation}
where $K\eqdefa (1+a_0)f-\partial_t\nabla\Phi+(1+a_0)\nabla \dv\, g$ and $1+a_0\eqdefa\f1{\rho_0}.$

We first get, by using  $L^2$ energy estimate to the equation \eqref{2.36}, that
\begin{equation}\label{basic-2.37aa}
\begin{split}
\|\sqrt{\rho_0}v\|_{L^\infty_t(L^2)}+\|\nabla v\|_{L^2_t(L^2)}
\lesssim
\|\sqrt{\rho_0}v_0\|_{L^2}
+\bigl\|(\sqrt{\rho_0}\partial_t\nabla\Phi,
f,\nabla\dv\, g)\bigr\|_{L^1_t(L^2)}.
\end{split}
\end{equation}

In what follows, we separate the proof of \eqref{estimate-uniqueness-1q} into the following two steps:

\no{\bf Step 1.}
The estimate of $\|v\|_{\widetilde{L}^\infty_T(\dot{B}^{-1+\frac{2}{p}}_{p,1})}
+\|v\|_{ L^1_T(\dot{B}^{1+\frac{2}{p}}_{p,1})}$.

By applying the operator $\mathbb{P}\dot{\Delta}_q$ to \eqref{2.37} and
using a commutator process, we write
\begin{equation*}
\begin{split}
\partial_t\dot{\Delta}_q v- (1+a_0)\Delta\dot{\Delta}_q v=[\mathbb{P}\dot{\Delta}_q, a_0](\Delta v-\nabla\Pi)+\mathbb{P}\dot{\Delta}_qK,
\end{split}
\end{equation*}
which can be equivalently written as
\begin{equation*}
\begin{split}
\rho_0\partial_t\dot{\Delta}_q v- \Delta\dot{\Delta}_q v=\rho_0[\mathbb{P}\dot{\Delta}_q, a_0](\Delta v-\nabla\Pi)+\rho_0\mathbb{P}\dot{\Delta}_qK,
\end{split}
\end{equation*}
By taking $L^2$ product of the above equation with $|\dot{\Delta}_q v|^{p-2}\dot{\Delta}_q v$ and using integration by parts, we obtain
\begin{align*}
&\frac{1}{p}\frac{d}{dt}\|\rho_0^{\frac{1}{p}}\dot{\Delta}_q v\|_{L^p}^p
-\int_{\R^2}\nabla\cdot(\nabla\dot{\Delta}_q v)\cdot|\dot{\Delta}_q v|^{p-2}\dot{\Delta}_q v\,dx\\
&\leq M\|\dot{\Delta}_q v\|_{L^p}^{p-1}\big(\|[\mathbb{P}\dot{\Delta}_q, a_0](\Delta v-\nabla\Pi)\|_{L^p}+\|\mathbb{P}\dot{\Delta}_qK\|_{L^p}\big).
\end{align*}
It follows from Lemma 8 in appendix of \cite{D2010} that
\begin{equation*}
\begin{split}
&-\int_{\R^2}\nabla\cdot(\nabla\dot{\Delta}_q v)\cdot|\dot{\Delta}_q v|^{p-2}\dot{\Delta}_q v\,dx\geq c_p2^{2q}\|\dot{\Delta}_{q}v\|_{L^p}^p,
\end{split}
\end{equation*}
so that we have
\begin{equation}\label{2.39-aa1}
\begin{split}
&\frac{d}{dt}\|\rho_0^{\frac{1}{p}}\dot{\Delta}_q v\|_{L^p}^p+pc_pM^{-1}2^{2q}\|\rho_0^{\frac{1}{p}}\dot{\Delta}_{j}v\|_{L^p}^p\\
&\lesssim\|\rho_0^{\frac{1}{p}}\dot{\Delta}_{j}v\|_{L^p}^{p-1}\big(\|[\mathbb{P}\dot{\Delta}_q, a_0](\Delta v-\nabla\Pi)\|_{L^p}+\|\mathbb{P}\dot{\Delta}_q K\|_{L^p}\big),
\end{split}
\end{equation}
which implies
\begin{equation*}
\begin{split}
\|\dot{\Delta}_q v(t)\|_{L^p} \lesssim  &e^{-c_p2^{2q}t}\|\dot{\Delta}_q v_0\|_{L^p}\\
&+
\int_{0}^{t}e^{-c_p2^{2q}(t-\tau)}\big(\|[\mathbb{P}\dot{\Delta}_q, a_0](\Delta v-\nabla\Pi)\|_{L^p}+\|\mathbb{P}\dot{\Delta}_q K\|_{L^p}\big)\,d\tau.
\end{split}
\end{equation*}
Then for any $r \in [1, +\infty]$,  by taking $L^r_t$ norm to the above inequality, we achieve
\begin{equation*}
\begin{split}
 \|\dot{\Delta}_q v\|_{L^r_t(L^p)} \lesssim & 2^{-\frac{2}{r}q}\|\dot{\Delta}_q v_0\|_{L^p}+2^{-\frac{2}{r}q}\big(\|[\mathbb{P}\dot{\Delta}_q, a_0](\Delta v-\nabla\Pi)\|_{L^1_t(L^p)}+\|\mathbb{P}\dot{\Delta}_q K\|_{L^1_t(L^p)}\big).
\end{split}
\end{equation*}
 By multiplying the above inequality by $2^{q\bigl(-1+\frac{2}{p}+\frac{2}{r}\bigr)}$ and taking the $\ell^1$ norm of the resulting inequalities, we find
\begin{equation*} 
\begin{split}
\|v\|_{\widetilde{L}_t^r(\dot{B}_{p,1}^{-1+\frac{2}{p}+\frac{2}{r}})}\lesssim & \|v_0\|_{\dot{B}_{p,1}^{-1+\frac{2}{p}}}+\|K\|_{\widetilde{L}_t^1(\dot{B}_{p,1}^{-1+\frac{2}{p}})}\\
&+\sum_{q\in\mathbb{Z}}2^{q\bigl(\frac{2}{p}-1\bigr)}\big(\|[\mathbb{P}\dot{\Delta}_q,a_0]\nabla \Pi\|_{L_t^1(L^p)}+\|[\mathbb{P}\dot{\Delta}_q,a_0]\Delta v\|_{L_t^1(L^p)}\big).
\end{split}
\end{equation*}
It follows from  Lemma \ref{lem2.5} that
\begin{equation*}
\begin{split}
\sum_{q\in\mathbb{Z}}2^{q\bigl(\frac{2}{p}-1\bigr)}\|[\mathbb{P}\dot{\Delta}_q,a_0]
\nabla \Pi\|_{L_t^1(L^p)}
\lesssim
\|a_0\|_{\dot{B}_{\lambda,2}^{\frac{2}{\lambda}}}\|\nabla \Pi\|_{L_t^1(L^2)}.
\end{split}
\end{equation*}
 While we get, by using Lemmas \ref{lem2.6}-\ref{lem2.7}, that
\begin{equation*}
\begin{split}
\sum_{q\in\mathbb{Z}}2^{q\bigl(\frac{2}{p}-1\bigr)}
\|[\mathbb{P}\dot{\Delta}_q,a_0]\Delta v\|_{L_t^1(L^p)}
\lesssim &
\|a_0-\dot{S}_m a_0\|_{\dot{B}_{\lambda,\infty}^{\frac{2}{\lambda}}}
\|v\|_{L_t^1(\dot{B}_{p,1}^{1+\frac{2}{p}})}\\
&+2^m\sqrt{t}\|a_0\|_{\dot{B}_{\lambda,2}^{\frac{2}{\lambda}}}
\bigl(\|\nabla v\|_{L_t^2(L^2)}
+\|v\|_{L_t^\infty(L^2)}^{\frac{1}{2}}
\|v\|_{L_{t}^1(\dot{B}_{p,1}^{1+\frac{2}{p}})}^{\frac{1}{2}}\bigr).
\end{split}
\end{equation*}
Finally we deduce from Lemma \ref{lem2.4} that
\begin{equation*}
\begin{split}
\|\nabla \Pi\|_{L^2}
\lesssim
\|a_0-\dot{S}_m a_0\|_{\dot B^{\frac{2}{\lambda}}_{\lambda,2}}
\|v\|_{\dot{B}_{p,1}^{1+\frac{2}{p}}}
+2^{m}\|\nabla v\|_{L^{2}}
\|a_0\|_{\dot B^{\frac{2}{\lambda}}_{\lambda,\infty}\cap{L^\infty}}
+\|K\|_{L^2}.
\end{split}
\end{equation*}
As a consequence, we obtain
\begin{align*}
\|v\|_{\widetilde{L}_t^r(\dot{B}_{p,1}^{\frac{2}{p}-1+\frac{2}{r}})}
\lesssim &
\|v_0\|_{\dot B^{-1+\frac{2}{p}}_{p,1}}
+(1+\|a_0\|_{\dot{B}_{\lambda,2}^{\frac{2}{\lambda}}})\big(\|K\|_{L_t^1(\dot{B}_{p,1}^{-1+\frac{2}{p}}\cap\,L^2)}
+\|a_0-\dot{S}_m a_0\|_{\dot{B}_{\lambda,\infty}^{\frac{2}{\lambda}}}
\|v\|_{L_t^1(\dot{B}_{p,1}^{1+\frac{2}{p}})}\big)
\\&
+2^m\sqrt{t}\|a_0\|_{\dot{B}_{\lambda,2}^{\frac{2}{\lambda}}\cap\,L^{\infty}}
\bigl(\|\nabla v\|_{L_t^2(L^2)}
+\|v\|_{L_t^\infty(L^2)}^{\frac{1}{2}}
\|v\|_{L_{t}^1(\dot{B}_{p,1}^{1+\frac{2}{p}})}^{\frac{1}{2}}\bigr).
\end{align*}
By taking $r=\infty$  and $r=1$ and using \eqref{basic-2.37aa},
  we deduce that for $m\geq m_0$ with $m_0$ being large enough
\begin{equation}\label{2.42}
\begin{split}
\|v\|_{\widetilde{L}_t^\infty(\dot{B}_{p,1}^{-1+\frac{2}{p}})}
&+\|v\|_{{L}_t^1(\dot{B}_{p,1}^{1+\frac{2}{p}})}
 \lesssim
\|v_0\|_{\dot B^{-1+\frac{2}{p}}_{p,1}}
+\|K\|_{L_t^1(\dot{B}_{p,1}^{-1+\frac{2}{p}}\cap L^2)}
\\&
+\bigl(2^m\sqrt{t}+2^{2m}t\bigr)\bigl(\|v_0\|_{L^2}
+\|\nabla\Phi_t\|_{L^1_t(L^2)}
+\|f\|_{L^1_t(L^2)}+\|\nabla\dv\,g\|_{L^1_t(L^2)}\bigr).
\end{split}
\end{equation}

\no{\bf Step 2.} The estimate of $\|\nabla \Pi\|_{ L^1_T(\dot{B}^{-1+\frac{2}{p}}_{p,1})}+\|v_t\|_{L^1_T(\dot{B}^{-1+\frac{2}{p}}_{p,1})}$.

By applying the operator $\dive\dot{\Delta}_q$ to the equation \eqref{2.37}, and using  a standard commutator's argument, we find
\begin{equation}\label{2.42}
\begin{split}
\dv\bigl((1+a_0)\nabla\dot\Delta_q\Pi\bigr) =-\dot\Delta_q\dv K
+\dot\Delta_q\dv(a_0\Delta v)+\dv[\dot\Delta_q,a_0]\nabla\Pi.
\end{split}
\end{equation}
By taking $L^2$ inner product of
 \eqref{2.42} with
$|\dot\Delta_q\Pi|^{p-2}\dot\Delta_q\Pi$ and using
 $\dv\, u=0,$
we find
\begin{equation*}
\begin{split}
2^{2q}\|\dot\Delta_q\Pi\|_{L^p}^p
&\lesssim
-\int_{\R^3}\dv((1+a_0)\dot\Delta_q\nabla\Pi)
 |\dot\Delta_q\Pi|^{p-2}\dot\Delta_q\Pi \,dx \\
&\lesssim
2^q\|\dot\Delta_q\Pi\|_{L^p}^{p-1}
\bigl(\|\dot\Delta_q K\|_{L^p}
+\|\dot\Delta_q(a_0\Delta v)\|_{L^p}+\|[\dot\Delta_q,a_0]\nabla\Pi\|_{L^p}\bigr),
\end{split}
\end{equation*}
from which, we infer
\begin{equation*}
\begin{split}
\|\nabla\Pi\|_{\dot B^{-1+{2\over p}}_{p,1}}
\lesssim &
\|K\|_{\dot B^{-1+\frac{2}{p}}_{p,1}}
+\|a_0\Delta v\|_{\dot B^{-1+\frac{2}{p}}_{p,1}}
+\sum_{q\in\mathbb{Z}}2^{q\bigl(-1+\f2p\bigr)}\|[\dot\Delta_q,a_0]\nabla\Pi\|_{L^p}.
\end{split}
\end{equation*}
It follows from Proposition \ref{prop2.2} that
\begin{equation*}
\begin{split}
&\|T_{a_0}\Delta v\|_{\dot B^{-1+\frac{2}{p}}_{p,1}}
\lesssim
\|a_0\|_{L^\infty}\|v\|_{\dot B^{1+\frac{2}{p}}_{p,1}},
\end{split}
\end{equation*}
and
\begin{equation*}
\|T_{\Delta v}a_0\|_{\dot B^{-1+\frac{2}{p}}_{p,1}}
\lesssim
\begin{cases}
\|a_0\|_{\dot B^{\frac{2}{\lambda}}_{\lambda,\infty}}\|\D v\|_{\dot B^{-1}_{\infty,1}},
\qquad\qquad\mbox{if $\lambda\le p$}, \\
\|a_0\|_{\dot B^{\frac{2}{\lambda}}_{\lambda,\infty}}
\|\Delta v\|_{\dot B^{-1-\frac{2}{\lambda}+\frac{2}{p}}_{\frac{p\lambda}{\lambda-p},1}},
\qquad\;\mbox{if $p\le\lambda$}.
\end{cases}
\end{equation*}
Whereas as $\frac{1}{2}<\frac{1}{p}+\frac{1}{\lambda}\le1$, one has
\begin{equation*}
\begin{split}
\|T_{\Delta v}a_0\|_{\dot B^{-1+\frac{2}{p}}_{p,1}}
\lesssim
\|a_0\|_{\dot B^{\frac{2}{\lambda}}_{\lambda,\infty}}
\|v\|_{\dot B^{1+\frac{2}{p}}_{p,1}},
\end{split}
\end{equation*}
and
\begin{equation*}
\begin{split}
\|R(a_0,\Delta v)\|_{\dot B^{-1+\frac{2}{p}}_{p,1}}
\lesssim
\|R(a_0,\Delta v)\|_{\dot B^{-1+\frac{2}{p}+\frac{2}{\lambda}}_{\frac{p\lambda}{p+\lambda},1}}
\lesssim
\|a_0\|_{\dot B^{\frac{2}{\lambda}}_{\lambda,\infty}}
\|v\|_{\dot B^{1+\frac{2}{p}}_{p,1}},
\end{split}
\end{equation*}
which yields
\begin{equation*}
\begin{split}
\|a_0\Delta v\|_{\dot B^{-1+\frac{2}{p}}_{p,1}}&\lesssim
\bigl(\|a_0\|_{L^\infty}
+\|a_0\|_{\dot B^{\frac{2}{\lambda}}_{\lambda,\infty}}\bigr)
\|v\|_{\dot B^{1+\frac{2}{p}}_{p,1}}.
\end{split}
\end{equation*}
While applying Lemma \ref{lem2.5} gives rise to
\begin{equation*}
\begin{split}
\sum_{q \in \mathbb{Z}}2^{q\bigl(-1+\f2p\bigr)}\|[\dot\Delta_q,a_0]\nabla\Pi\|_{L^p}
\lesssim
\|a_0\|_{\dot{B}_{\lambda,2}^{\frac{2}{\lambda}}}
\|\nabla\Pi\|_{L^2}.
\end{split}
\end{equation*}
Consequently, we obtain
\begin{equation*}
\begin{split}
\|\nabla\Pi\|_{L^1_t(\dot B^{-1+{2\over p}}_{p,1})}
\lesssim
\|K\|_{L^1_t(\dot B^{-1+\frac{2}{p}}_{p,1})}
+
\|v\|_{L^1_t(\dot B^{1+\frac{2}{p}}_{p,1})}
+\|\nabla\Pi\|_{L^1_t(L^2)}.
\end{split}
\end{equation*}

On the other hand, we deduce from
the momentum equation in \eqref{2.37} that
\begin{equation*}
\begin{split}
\|v_t\|_{L^1_t(\dot B^{-1+\frac{2}{p}}_{p,1})}
\lesssim
\bigl(1+\|a_0\|_{\dot B^{\frac{2}{\lambda}}_{\lambda,2}\cap\,L^\infty}\bigr)
\bigl(\|v\|_{L_t^1(\dot B^{1+\frac{2}{p}}_{p,1})}
+\|\nabla\Pi\|_{L_t^1(\dot B^{-1+\frac{2}{p}}_{p,1})}\bigr)
+\|K\|_{L^1_t(\dot B^{-1+\frac{2}{p}}_{p,1})}.
\end{split}
\end{equation*}
Therefore, we obtain
\begin{equation}\label{v-crit-toy-1}
\begin{split}
&\|v\|_{\widetilde{L}_t^\infty(\dot{B}_{p,1}^{-1+\frac{2}{p}})}+\|(\nabla^2v,\,\nabla\Pi,\,v_t)\|_{L^1_t(\dot B^{-1+\frac{2}{p}}_{p,1})}\lesssim\|v_0\|_{\dot B^{-1+\frac{2}{p}}_{p,1}}\\
&\qquad+\|K\|_{L_t^1(\dot{B}_{p,1}^{-1+\frac{2}{p}}\cap L^2)}
+\bigl(2^m\sqrt{t}+2^{2m}t\bigr)\bigl(\|v_0\|_{L^2}
+\|\partial_t\nabla\Phi\|_{L^1_t(L^2)}
+\|f\|_{L^1_t(L^2)}\bigr).
\end{split}
\end{equation}
It follows from the law of product that
\begin{equation*}
\begin{split}
\|K\|_{L_t^1(\dot B^{-1+\frac{2}{p}}_{p,1}\cap L^2)}
\lesssim
\bigl(1+\|a_0\|_{\dot B^{\frac{2}{\lambda}}_{\lambda,2}\cap L^\infty}\bigr)
\bigl(\|f\|_{L_t^1(\dot B^{-1+\frac{2}{p}}_{p,1}\cap L^2)}
&+\|\nabla\,\dv\,g\|_{L_t^1(\dot B^{-1+\frac{2}{p}}_{p,1}\cap L^2)}\bigr)\\
&+\|\nabla\Phi_t\|_{L_t^1(\dot B^{-1+\frac{2}{p}}_{p,1}\cap L^2)}.
\end{split}
\end{equation*}
By inserting the above estimate into \eqref{v-crit-toy-1}, we achieve
 \eqref{estimate-uniqueness-1q}. This completes the proof of Proposition \ref{prop2.4}.
\end{proof}

To prove the uniqueness part of Theorem \ref{thm1.1}
for  $p\in ]2,\infty[$, we recall the following propositions from \cite{AG2021}.

\begin{prop}\label{prop2.9}(See \cite{AG2021})
{\sl Let $u_0\in B^{-1}_{2,\infty}$ and $v\in L^1_T(B^1_{\infty,1})$ be a solenoidal
vector field. Let
$f\in\widetilde L^1_T( B^{-1}_{2,\infty}),$
and
$a \in\widetilde L^\infty_T( B^{2}_{2,1})$ with $
1+a\geq \f1M>0$ for some positive constant $M$. We assume that
$u \in L^{\infty}_T( B^{-1}_{2,\infty})
\cap\widetilde L^1_T( B^{1}_{2, \infty})$ and $\grad\Pi\in
\widetilde L^1_T( B^{-1}_{2,\infty})$  solves
\begin{equation*} 
\begin{cases}
\displaystyle\partial_t u +v \cdot \grad u -(1+a)( \Delta u-\nabla \Pi) = f,\quad (t, x) \in \R_+\times\R^2,\\
\displaystyle{\mathop{\rm div}}\, u=0,\\
\displaystyle u_{|t=0}=u_0.
\end{cases}
\end{equation*}
Then there holds:
\begin{equation*}\label{estimate-uniqueness-1}
\begin{split}
\| u\|_{L^\infty_T(B^{-1}_{2,\infty})}
+\|u\|_{\widetilde L^1_T(B^{1}_{2, \infty})}\leq&
Ce^{C\bigl(T+T\|\nabla\,a\|_{\widetilde L^\infty_T(B^1_{2,1})}^2+\|v\|_{L^1_T(B^1_{\infty,1})}\bigr)}
\\&\times\bigl(\|u_0\|_{B^{-1}_{2,\infty}}
+\|f\|_{\widetilde L^1_T(B^{-1}_{2,\infty})}
+\|a\|_{\widetilde L^\infty_T(\dot{B}^{1}_{2,1})}
\|\nabla\Pi\|_{\widetilde L^1_T(B^{-1}_{2,\infty})}\bigr).
\end{split}
\end{equation*}
}
\end{prop}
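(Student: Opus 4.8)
The plan is to follow the scheme of \cite{AG2021} (it is the same mechanism as in the proof of Proposition~\ref{prop2.4}): a frequency-by-frequency $L^2$ energy estimate on the parabolic--transport equation, closed by Gronwall's inequality, with the parabolic smoothing supplying the two-derivative gain needed for the $\widetilde L^1_T(B^1_{2,\infty})$ norm. First I would apply the dyadic block $\Delta_q$ to the equation, keeping the variable coefficient $1+a$ in front of the Laplacian, which gives
\[
\partial_t\Delta_q u+v\cdot\nabla\Delta_q u-(1+a)\Delta\Delta_q u
=\Delta_q f-[\Delta_q,v\cdot\nabla]u+[\Delta_q,a]\Delta u-\Delta_q\bigl((1+a)\nabla\Pi\bigr),
\]
and take the $L^2$ inner product of this identity with $\Delta_q u$. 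Three structural facts are exploited. Since $\dv\,v=0$, the convection term $\int_{\R^2}v\cdot\nabla\Delta_q u\cdot\Delta_q u\,dx$ vanishes. Since $\dv\,u=0$, the pure gradient part of the pressure is orthogonal to $\Delta_q u$, i.e. $\int_{\R^2}\Delta_q\nabla\Pi\cdot\Delta_q u\,dx=-\int_{\R^2}\Delta_q\Pi\,\dv\Delta_q u\,dx=0$, so that only $\Delta_q(a\nabla\Pi)$ survives from $\Delta_q((1+a)\nabla\Pi)$. Finally $1+a\ge\frac1M$ gives the coercive bound $\int_{\R^2}(1+a)|\nabla\Delta_q u|^2\,dx\ge\frac1M\|\nabla\Delta_q u\|_{L^2}^2\ge\frac{c}{M}2^{2q}\|\Delta_q u\|_{L^2}^2$ for $q\ge0$, while for $q=-1$ there is no such gain (harmless, it only accounts for the isolated factor $T$ in the final exponential).

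The first-order term $\int_{\R^2}\nabla a\cdot\nabla\Delta_q u\,\Delta_q u\,dx$ coming from the integration by parts in the diffusion is split by Young's inequality: half is reabsorbed by the coercive term and the other half produces $CM\|\nabla a\|_{L^\infty}^2\|\Delta_q u\|_{L^2}^2$; using the two-dimensional embedding $B^1_{2,1}\hookrightarrow L^\infty$ to bound $\|\nabla a\|_{L^\infty}\lesssim\|\nabla a\|_{B^1_{2,1}}$, this is precisely the source of the term $T\|\nabla a\|_{\widetilde L^\infty_T(B^1_{2,1})}^2$ in the exponent. The transport commutator is controlled by the classical estimate (cf. Lemma~\ref{lem-commutator-1} and its transport version), $\|[\Delta_q,v\cdot\nabla]u(t)\|_{L^2}\lesssim c_q(t)\|v(t)\|_{B^1_{\infty,1}}\|u(t)\|_{B^{-1}_{2,\infty}}$, which after Gronwall yields the $\|v\|_{L^1_T(B^1_{\infty,1})}$ in the exponent; the pressure term is estimated through the product laws of Proposition~\ref{prop2.2}, giving $\|a\nabla\Pi\|_{\widetilde L^1_t(B^{-1}_{2,\infty})}\lesssim\|a\|_{\widetilde L^\infty_t(\dot B^1_{2,1})}\|\nabla\Pi\|_{\widetilde L^1_t(B^{-1}_{2,\infty})}$, which is why the right-hand side carries the factor $\|a\|_{\widetilde L^\infty_T(\dot B^1_{2,1})}\|\nabla\Pi\|_{\widetilde L^1_T(B^{-1}_{2,\infty})}$ and not merely $\|\nabla\Pi\|$.

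The heart of the matter, and the step I expect to be the main obstacle, is the variable-coefficient term $[\Delta_q,a]\Delta u$. Using Bony's decomposition~\eqref{bony}, its paraproduct part $[\Delta_q,T_a]\Delta u$ is, by Lemma~\ref{lem-commutator-1} and Bernstein's inequality, of size $2^q\|\nabla a\|_{L^\infty}\|\Delta_q u\|_{L^2}$ up to a finite frequency shift, hence absorbed by the coercive term for the high frequencies $2^q\gtrsim M\|\nabla a\|_{L^\infty}$ and, for the finitely many remaining low frequencies, contributing once more a $\|\nabla a\|_{B^1_{2,1}}^2$ Gronwall coefficient. The remaining pieces of $[\Delta_q,a]\Delta u$ (the remainder $R(a,\Delta u)$ and the paraproduct with $a$ at the higher frequency), where a derivative gain on $a$ is not directly available, must be handled using the extra regularity $a\in B^2_{2,1}$: one exploits the $\ell^1$-summability of $\{2^{2q}\|\Delta_q a\|_{L^2}\}$ together with a Bernstein gain at low frequencies (and, if necessary, a low-frequency truncation $a=\dot S_m a+(a-\dot S_m a)$ with $\|a-\dot S_m a\|_{B^2_{2,1}}$ small) in order to control their contribution. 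The difficulty is genuine because one works with the $\ell^\infty$-summable spaces $B^s_{2,\infty}$, which lack the convenient summability of $B^s_{2,1}$, so these variable-coefficient contributions must be tracked carefully at both ends of the frequency spectrum to avoid a loss.

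Finally I would integrate the scalar inequality for $\|\Delta_q u(t)\|_{L^2}$ against the integrating factor $e^{-\frac cM 2^{2q}(t-\tau)}$, take the supremum over $q\in\Z$ (the $\ell^\infty$-summation corresponding to the third index $\infty$), and use $\int_0^te^{-\frac cM 2^{2q}(t-\tau)}\,d\tau\le\min\bigl(t,\tfrac{CM}{2^{2q}}\bigr)$ both to recover the time-integrated $\widetilde L^1_t(B^1_{2,\infty})$ bound from the $\|\Delta_q f\|$, $\|a\nabla\Pi\|$ and commutator sources, and to treat the block $q=-1$. A Gronwall argument in $\|u\|_{\widetilde L^\infty_t(B^{-1}_{2,\infty})}+\|u\|_{\widetilde L^1_t(B^1_{2,\infty})}$, whose time-dependent coefficient is $C\bigl(1+\|\nabla a(\tau)\|_{B^1_{2,1}}^2+\|v(\tau)\|_{B^1_{\infty,1}}\bigr)$, then produces the announced exponential estimate.
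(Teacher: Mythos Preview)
The paper does not prove Proposition~\ref{prop2.9}; it is quoted verbatim from \cite{AG2021} and used as a black box in Step~2.2 of Section~\ref{sect-global}. So there is no proof in the paper to compare against, and your outline must be judged on its own merits.

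Your strategy (localize with $\Delta_q$, keep $(1+a)$ in front of the Laplacian, $L^2$ energy estimate, then Gronwall) is the correct one, and you have correctly identified the crux: the commutator $[\Delta_q,a]\Delta u$, and in particular the pieces $\Delta_q T_{\Delta u}a$ and $\Delta_q R(a,\Delta u)$ where no derivative gain on $a$ is immediately available. However, your proposed handling of these pieces is not quite enough to close the estimate as stated. The issue is that these pieces, estimated directly, contribute (after the heat-kernel integration and the $\sup_q$)
\[
\sup_q 2^{-q}\int_0^T\bigl\|\Delta_q T_{\Delta u}a\bigr\|_{L^2}\,dt
\;\lesssim\;\|a\|_{\widetilde L^\infty_T(B^2_{2,1})}\,\|u\|_{\widetilde L^1_T(B^1_{2,\infty})},
\]
with no small factor in front; this cannot be absorbed into the left-hand side since no smallness of $a$ is assumed. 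Your suggested fix---truncating $a=\dot S_m a+(a-\dot S_m a)$ with $\|a-\dot S_m a\|_{B^2_{2,1}}$ small---would make $m$ depend on the \emph{profile} of $a$ rather than only on $\|\nabla a\|_{\widetilde L^\infty_T(B^1_{2,1})}$, so it does not directly yield the explicit exponential $e^{CT\|\nabla a\|^2}$ in the statement.

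A clean way to close (and presumably what \cite{AG2021} does) is to first integrate by parts once in the diffusion, writing
\[
[\Delta_q,a]\Delta u=\dv\bigl([\Delta_q,a]\nabla u\bigr)-\Delta_q(\nabla a\cdot\nabla u)+\nabla a\cdot\nabla\Delta_q u,
\]
so that, after pairing with $\Delta_q u$, the dangerous commutator carries only $\nabla u$ rather than $\Delta u$. One then shows by Bony's decomposition that every piece of $[\Delta_q,a]\nabla u$ is bounded by $C\|\nabla a\|_{B^1_{2,1}}\|\Delta_q u\|_{L^2}$ (using $a\in B^2_{2,1}$ for the $T'$ and $R$ parts), and the divergence is absorbed by Young into the coercive $\frac1M\|\nabla\Delta_q u\|_{L^2}^2$, leaving precisely the Gronwall coefficient $C\|\nabla a\|_{B^1_{2,1}}^2$ that produces $T\|\nabla a\|_{\widetilde L^\infty_T(B^1_{2,1})}^2$ in the exponent. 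The remaining term $\Delta_q(\nabla a\cdot\nabla u)$ is handled by product laws since $\nabla a\in B^1_{2,1}$ is a multiplier on $B^0_{2,\infty}$. With this modification your Gronwall argument in $\|u\|_{L^\infty_t(B^{-1}_{2,\infty})}+\|u\|_{\widetilde L^1_t(B^1_{2,\infty})}$ goes through exactly as you describe.
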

\begin{prop}\label{prop-H-negtive}(See \cite{AG2021})
 {\sl We assume that $a\in \dot{B}^{1}_{2,1}$ satisfies $ 0<{\underline{b}}\leq
1+a\leq {\bar{b}}$ for two positive constants $\underline{b}$ and $\bar{b}$ , and
\begin{equation}\label{est-gen-unique-1}
\|a-\dot{S}_k a\|_{\dot{B}^{1}_{2,1}}\leq c
\end{equation}
for some sufficiently small positive constant $c$ and some integer
$k \in \N.$  Let $F \in B^{-1}_{2,\infty}$  and $\grad \Pi \eqdefa
\mathcal{H}_{b}({F})\in B^{-1}_{2,\infty}$ solve
\begin{equation*}\label{unique-per-gen-model}
\dive\,((1+a) \grad \Pi) =\dive\,F.
\end{equation*}
Then there holds
\begin{equation*}\label{estimate-unique-per-model}
\|\grad \Pi\|_{B^{-1}_{2,\infty}}
 \lesssim
\bigl(1+2^{k}\|a\|_{B^{0}_{\infty,1}}(1+\|a\|_{B^{0}_{\infty,1}})\bigr)
 \bigl(\| F\|_{B^{-2}_{2,\infty}} +\|\dive\, F\|_{B^{-2}_{2,\infty}}\bigr).
\end{equation*}
}
\end{prop}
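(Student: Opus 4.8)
Since Proposition~\ref{prop-H-negtive} is quoted from \cite{AG2021}, the plan is to follow that argument; here is its structure. One treats $\dive((1+a)\nabla\Pi)=\dive F$ as an elliptic equation with coefficient $1+a\ge\underline b>0$, the only difficulty being that $a\in\dot B^1_{2,1}$ controls $a$ in $L^\infty$ but \emph{not} $\nabla a$ in $L^\infty$, so the commutators produced by a Littlewood--Paley localization cannot be estimated naively. The device is the splitting $a=\dot S_k a+\widetilde a$ with $\widetilde a\eqdefa a-\dot S_k a$: by \eqref{est-gen-unique-1} one has $\|\widetilde a\|_{\dot B^1_{2,1}}\le c$ with $c$ small, while the low--frequency part is smooth and, by Bernstein's inequality (Lemma~\ref{lem2.1}), obeys $\|\nabla\dot S_k a\|_{L^\infty}\lesssim 2^k\|a\|_{B^0_{\infty,1}}$ --- this $2^k$ is exactly the source of the prefactor in the statement.

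First I would localize: applying $\Delta_q$ to the equation and commuting gives $\dive\big((1+a)\nabla\Delta_q\Pi\big)=\Delta_q\dive F-\dive\,[\Delta_q,a]\nabla\Pi$, and an $L^2$ energy estimate against $\Delta_q\Pi$, using $1+a\ge\underline b$, yields
\[
\|\Delta_q\nabla\Pi\|_{L^2}\lesssim \|\Delta_q F\|_{L^2}+\big\|[\Delta_q,a]\nabla\Pi\big\|_{L^2}.
\]
The commutator is then split as $[\Delta_q,\widetilde a]\nabla\Pi+[\Delta_q,\dot S_k a]\nabla\Pi$. For the first, Bony's decomposition together with the product and remainder estimates of Proposition~\ref{prop2.2} (in the spirit of the proof of Lemma~\ref{lem2.4}) gives a bound $\lesssim c\,2^{q}\|\nabla\Pi\|_{B^{-1}_{2,\infty}}$, hence $\lesssim c\,\|\nabla\Pi\|_{B^{-1}_{2,\infty}}$ after the $2^{-q}$ weight, which is absorbed by the left--hand side once $c$ is small. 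For the second, Bony's decomposition and Lemma~\ref{lem-commutator-1} apply: the paraproduct--commutator piece carries the gain $2^{-q}\|\nabla\dot S_k a\|_{L^\infty}\lesssim 2^{k-q}\|a\|_{B^0_{\infty,1}}$, so it too is absorbable once $q$ exceeds $\sim k+\log_2\|a\|_{B^0_{\infty,1}}$, and re--inserting $\nabla\Pi=\nabla(-\Delta)^{-1}\dive(F-a\nabla\Pi)$ into the lowest--order term is what promotes one of these factors to $\|a\|_{B^0_{\infty,1}}^2$.

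It remains to handle the finitely many blocks $q\lesssim k+\log\|a\|_{B^0_{\infty,1}}$, including the inhomogeneous block $q=-1$, where neither the smallness of $c$ nor the commutator gain is available. There I would drop the variable--coefficient structure and bound $\Delta_q\nabla\Pi=\nabla(-\Delta)^{-1}\dive\,\Delta_q(F-a\nabla\Pi)$ directly, using that $\nabla(-\Delta)^{-1}\dive$ composed with a fixed dyadic projector is a bounded $L^2$ Fourier multiplier, and --- this is the point of the hypotheses being stated in $B^{-2}_{2,\infty}$ rather than $B^{-1}_{2,\infty}$ --- using $\|\Delta_q F\|_{L^2}\lesssim 2^{2q}\|F\|_{B^{-2}_{2,\infty}}$ and $\|\Delta_q\dive F\|_{L^2}\lesssim 2^{2q}\|\dive F\|_{B^{-2}_{2,\infty}}$, so that the $2^{-q}$ weight turns $2^{2q}$ into $2^{q}\lesssim 2^k(1+\|a\|_{B^0_{\infty,1}})$; the residual $\Delta_q(a\nabla\Pi)$ over this range is again split into a small $\widetilde a$--part and a $\dot S_k a$--part controlled by $2^k\|a\|_{B^0_{\infty,1}}\|S_{k}\nabla\Pi\|_{L^2}$, which is closed together with the high--frequency estimate. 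Summing the two regimes and taking the supremum over $q$ gives the asserted bound.

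The step I expect to be the main obstacle is precisely this last one: in the low/intermediate--frequency regime one must close the estimate without reintroducing $\|\nabla\Pi\|_{B^{-1}_{2,\infty}}$ with a non--absorbable coefficient, which forces a careful choice of which copy of $\nabla\Pi$ is measured in which norm and a tight accounting of the powers of $2^k$ --- getting down to exactly $2^k\|a\|_{B^0_{\infty,1}}(1+\|a\|_{B^0_{\infty,1}})$ and no higher power is the delicate point, and it is carried out in detail in \cite{AG2021}.
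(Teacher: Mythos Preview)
The paper does not contain a proof of Proposition~\ref{prop-H-negtive}: it is stated with the attribution ``(See \cite{AG2021})'' and is simply recalled from that reference, so there is no in-paper argument to compare your proposal against. Your proposal correctly identifies this and offers a reconstruction of the argument from \cite{AG2021}; the overall strategy you describe --- localize, split $a=\dot S_k a+(a-\dot S_k a)$, absorb the high-frequency commutator via the smallness of $c$, and handle the low-frequency range at the price of the $2^k$ factor using the $B^{-2}_{2,\infty}$ data --- is exactly the mechanism behind the stated estimate, so your sketch is consistent with the cited source.
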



\renewcommand{\theequation}{\thesection.\arabic{equation}}
\setcounter{equation}{0}

\section{{\it A prior} estimates}\label{sect-Lip}

The goal of this section is to present the {\it a priori} estimates which will be the most crucial ingredient used to prove Theorem \ref{thm1.1}.
The main result states as follows.

\begin{prop}\label{prop3.2}
{\sl  Let $p\in [2,+\infty[$ and $\lambda\in [1,+\infty[$ such that
$\frac{1}{2}<\frac{1}{p}+\frac{1}{\lambda}\leq 1.$
Let $a_0\in L^\infty\cap\dot{B}_{\lambda,2}^{\frac{2}{\lambda}}$ with $\rho_0=\f1{1+a_0}$ satisfying \eqref{1.4} and $u_0\in\dot{B}_{p,1}^{-1+\frac{2}{p}}\cap L^2.$ Let $(a,u,\na\Pi)$ be a  smooth enough
solution of \eqref{1.3}. Then there are a positive constant $C$, a large integer $m_0 \in \mathbb{N}$ and a small positive time $T_1$ so that
for $m\geq m_0,$
\begin{equation}\label{3.2}
\begin{split}
&\|\nabla u\|_{L_{T_1}^2(L^2)}+\|u\|_{\widetilde L^1_{T_1}(\dot H^2)}
+\|u\|_{{L}_{T_1}^{1}(\dot{B}_{p,1}^{1+\frac{2}{p}})}
+\|\nabla \Pi\|_{L^1_{T_1}(L^2)}\\
&\le  C\sum_{q\in\mathbb{Z}}2^{q\bigl(\frac{2}{p}-1\bigr)}\bigl(1-e^{-c_p2^{2q}\,T_1}\bigr)
 \|\dot{\Delta}_{q}u_0\|_{L^p}
 +C\Bigl(\sum_{q\in\mathbb{Z}}\bigl(1-e^{-c2^{2q}\,T_1}\bigr)
\|\dot\Delta_qu_0\|_{L^2}^2\Bigr)^{\frac{1}{2}}
\\&\quad
+C \bigg(2^{2m}\,T_1+ \bigl(\sum_{q\geq m}2^{\frac{4}{\lambda}q}
 \|\dot{\Delta}_qa_0\|_{L^\lambda}^2\bigr)^{\frac{1}{2}}\bigg).
 \end{split}
 \end{equation}
}
\end{prop}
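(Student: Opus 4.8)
The plan is to run a standard ``a priori estimate via the linearized Stokes system'' bootstrap, carefully tracking the bad low-frequency piece of $a_0$ via the splitting $a_0=\dot S_m a_0+(a_0-\dot S_m a_0)$ as in Lemma \ref{lem2.4} and Proposition \ref{prop2.4}. First I would close the $L^2$ energy estimate for $u$ directly from \eqref{ener} (equivalently, testing the momentum equation of \eqref{1.3} with $u$ using $\rho=1/(1+a_0)\ge M_1$): this gives
\[
\|\sqrt{\rho}\,u\|_{L^\infty_{T_1}(L^2)}+\|\nabla u\|_{L^2_{T_1}(L^2)}\lesssim\|\sqrt{\rho_0}\,u_0\|_{L^2},
\]
and in fact the sharper spectral version $\Big(\sum_q(1-e^{-c2^{2q}T_1})\|\dot\Delta_q u_0\|_{L^2}^2\Big)^{1/2}$ comes out of the same heat-flow estimate applied dyadically (this is where the second term on the right of \eqref{3.2} originates). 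Next, apply $\mathbb P\dot\Delta_q$ to the momentum equation written as $\rho\partial_tu-\Delta u+\nabla\Pi=-\rho\,u\cdot\nabla u$, use the commutator trick $\mathbb P\dot\Delta_q(a_0(\Delta u-\nabla\Pi))=a_0\Delta\dot\Delta_q u+[\mathbb P\dot\Delta_q,a_0](\Delta u-\nabla\Pi)$ exactly as in Proposition \ref{prop2.4}, and use Lemma 8 of \cite{D2010} to get the dyadic dissipation lower bound $c_p2^{2q}\|\dot\Delta_q u\|_{L^p}^p$. Taking $L^r_{T_1}$ norms ($r=1,\infty$) and summing with weight $2^{q(-1+2/p)}$ produces, with $c_p$ the sharp constant,
\[
\begin{aligned}
\|u\|_{\widetilde L^\infty_{T_1}(\dot B^{-1+2/p}_{p,1})}+\|u\|_{L^1_{T_1}(\dot B^{1+2/p}_{p,1})}
&\lesssim\sum_q 2^{q(\frac2p-1)}(1-e^{-c_p2^{2q}T_1})\|\dot\Delta_q u_0\|_{L^p}\\
&\quad+\sum_q2^{q(\frac2p-1)}\big(\|[\mathbb P\dot\Delta_q,a_0]\nabla\Pi\|_{L^1_{T_1}(L^p)}+\|[\mathbb P\dot\Delta_q,a_0]\Delta u\|_{L^1_{T_1}(L^p)}\big)\\
&\quad+\|u\cdot\nabla u\|_{\widetilde L^1_{T_1}(\dot B^{-1+2/p}_{p,1}\cap L^2)}.
\end{aligned}
\]

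The heart of the argument is then to absorb the right-hand side. For the commutator terms I would invoke Lemma \ref{lem2.5} (for $\nabla\Pi$) and Lemmas \ref{lem2.6}--\ref{lem2.7} (for $\Delta u$, after splitting $a_0=\dot S_ma_0+(a_0-\dot S_ma_0)$): the $\dot S_ma_0$ part contributes the factor $2^m\sqrt{t}\,\|a_0\|_{\dot B^{2/\lambda}_{\lambda,2}}\big(\|\nabla u\|_{L^2_t(L^2)}+\|u\|_{L^\infty_t(L^2)}^{1/2}\|u\|_{L^1_t(\dot B^{1+2/p}_{p,1})}^{1/2}\big)$, while the high-frequency part $a_0-\dot S_ma_0$ contributes $\|a_0-\dot S_ma_0\|_{\dot B^{2/\lambda}_{\lambda,\infty}}\|u\|_{L^1_t(\dot B^{1+2/p}_{p,1})}$. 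For $\nabla\Pi$ in $L^1_{T_1}(L^2)$ I use Lemma \ref{lem2.4} in the same split form. Now choose $m\ge m_0$ large so that $\|a_0-\dot S_ma_0\|_{\dot B^{2/\lambda}_{\lambda,\infty}}$ (and $\|a_0-\dot S_ma_0\|_{\dot B^{2/\lambda}_{\lambda,2}}$) is small enough to absorb the corresponding $\|u\|_{L^1_{T_1}(\dot B^{1+2/p}_{p,1})}$ term into the left; then, with $m$ fixed, choose $T_1$ small enough that $2^m\sqrt{T_1}+2^{2m}T_1$ is small, absorbing the remaining $\|\nabla u\|_{L^2}$- and $\|u\|_{L^1(\dot B^{1+2/p}_{p,1})}$-dependent terms (using the already-established $L^2$ bound to control $\|u\|_{L^\infty_t(L^2)}$ and $\|\nabla u\|_{L^2_t(L^2)}$ by $C_{\rm in}$, and Young's inequality on the product $\|u\|_{L^\infty(L^2)}^{1/2}\|u\|_{L^1(\dot B^{1+2/p}_{p,1})}^{1/2}$). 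The nonlinear term $u\cdot\nabla u$ is handled by Bony decomposition and the product laws of Proposition \ref{prop2.2}, estimated by $\|u\|_{L^\infty_{T_1}(L^2)}\|u\|_{L^1_{T_1}(\dot B^{1+2/p}_{p,1})}^{1/2}\cdots$ and likewise absorbed for $T_1$ small; the low-frequency $L^2$ control of $u\cdot\nabla u$ via $\|\nabla u\|_{L^2}^2$ uses $\dive u=0$ as in the proof of Lemma \ref{lem2.4}. Finally, the term $\big(\sum_{q\ge m}2^{4q/\lambda}\|\dot\Delta_qa_0\|_{L^\lambda}^2\big)^{1/2}$ and $2^{2m}T_1$ appear precisely because controlling $\|a-\dot S_ma\|_{\widetilde L^\infty_{T_1}(\dot B^{2/\lambda}_{\lambda,2})}$ — needed to bound $\nabla\Pi$ and the commutators \emph{at positive times}, not just at $t=0$ — requires Proposition \ref{prop2.3}, whose estimate \eqref{2.32} has exactly these ingredients (the genuinely nonlinear term $\|a_0\|_{\dot B^{2/\lambda}_{\lambda,2}}\|u\|_{\widetilde L^1_t(\dot H^2)}e^{C\|\nabla u\|_{L^1_t(L^\infty)}}$ being absorbed once $\|u\|_{\widetilde L^1_{T_1}(\dot H^2)}$ is small for $T_1$ small).

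The main obstacle, and the delicate point of the whole proposition, is the \emph{order} of the two smallness choices and the interlocking of $m$ and $T_1$: the coefficient $2^m\sqrt{T_1}+2^{2m}T_1$ in front of the energy-type quantities blows up in $m$, so one cannot first fix $T_1$ and then enlarge $m$. One must first fix $m=m_0$ using only the frequency-localization smallness $\|a_0-\dot S_{m_0}a_0\|_{\dot B^{2/\lambda}_{\lambda,\infty}}\to0$ (which is a property of $a_0\in\dot B^{2/\lambda}_{\lambda,2}$, \emph{independent of the solution}), absorbing all terms of the form $(\text{small})\times\|u\|_{L^1_{T_1}(\dot B^{1+2/p}_{p,1})}$; only afterwards, with $m_0$ frozen, does one shrink $T_1$ to kill the $2^{m_0}\sqrt{T_1}+2^{2m_0}T_1$ factor — and here one must be careful that the right-hand side of \eqref{3.2} still contains $2^{2m}T_1$ and $\big(\sum_{q\ge m}2^{4q/\lambda}\|\dot\Delta_qa_0\|_{L^\lambda}^2\big)^{1/2}$, i.e. those are \emph{not} absorbed but kept as (small, controllable) data terms, since they do not involve $\|u\|$. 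A secondary technical care point is the simultaneous bookkeeping of the $L^p$-based Besov norms and the $L^2$-based norms ($\dot H^2$, $L^1(L^2)$ for $\nabla\Pi$), which is why Lemmas \ref{lem2.5}--\ref{lem2.7} are stated with the two conjugate exponents and why $\frac12<\frac1p+\frac1\lambda\le1$ is used repeatedly; one checks at each product/commutator estimate that this condition is exactly what makes the required embedding hold.
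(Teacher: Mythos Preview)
Your proposal is correct and follows essentially the same route as the paper: divide the momentum equation by $\rho$, apply $\mathbb P\dot\Delta_q$, multiply back by $\rho$, run the dyadic $L^p$ energy estimate with the commutator splitting $a=\dot S_m a+(a-\dot S_m a)$, invoke Lemmas~\ref{lem2.3}--\ref{lem2.7} and Propositions~\ref{prop2.3}--\ref{lem2.4}, and close by first fixing $m$ large (from $a_0\in\dot B^{2/\lambda}_{\lambda,2}$) and then shrinking $T_1$. Two minor refinements relative to your write-up: (i) the commutators are with $a(t)$, not $a_0$ (you note this later, but your displayed commutator should read $[\mathbb P\dot\Delta_q,a]$), and the convection term is handled in the paper via the commutator $[\dot\Delta_q\mathbb P,u\cdot\nabla]u$ and Lemma~\ref{lem2.3}, yielding the clean $\|\nabla u\|_{L^2_t(L^2)}^2$ rather than a product estimate; (ii) the final ``absorption'' is carried out in the paper as an explicit continuity argument on the aggregate quantity $U(t)\eqdefa\|\nabla u\|_{L^2_t(L^2)}+\|u\|_{L^1_t(\dot B^{1+2/p}_{p,1})}+\|u\|_{\widetilde L^1_t(\dot H^2)}$, which satisfies an inequality of the form $U\le(\text{data})+C_\eta e^{CU}(1+U)\bigl(2^{2m}t+U^2+(\text{small})U\bigr)$, rather than a direct absorption.
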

\begin{proof} We divide the proof into the following three steps.

\no{\bf Step 1.} The estimate of $\|\na u\|_{L^2_t(L^2)}.$

We first observe that there holds \eqref{ener} and it follows from \eqref{1.4} that
\begin{equation*}
M_1\leq \rho(t,x)\leq M_2.
\end{equation*}
While similar to the proof of Proposition \ref{prop2.4}, we get, by
first dividing the momentum equation of \eqref{1.2} by $\rho$  and then applying Leray projector $\mathbb{P}$  to
the resulting equation, that
\begin{equation*}
\begin{split}
\partial_t u+\mathbb{P}\bigl(u\cdot\nabla u\bigr)
-\mathbb{P}\bigl({\rho}^{-1}(\Delta u-\nabla\Pi)\bigr)
=0.
\end{split}
\end{equation*}
By
applying $\dot\Delta_q$ to the above equation and using a standard
commutator's process, we write
\begin{equation*}\label{3.3}
\begin{split}
\rho\partial_t\dot\Delta_qu+\rho
u\cdot\nabla\dot\Delta_qu-\D\dot\D_ju
&=-\rho[\dot\Delta_q\mathbb{P}, u\cdot\nabla]u
+\rho[\dot\Delta_q\mathbb{P},
{\rho}^{-1}]\bigl(\Delta u-\nabla\Pi\bigr).
\end{split}
\end{equation*}
By taking $L^2$ inner product of the above equation with
$\dot\Delta_qu$ and using the transport equation of \eqref{1.2}, we obtain
\begin{align*}
&\frac{1}{2}\frac{\mathrm{d}}{\mathrm{d}t}\int_{\R^2}\rho|\dot\Delta_q u|^2\,\mathrm{d}x-
\int_{\R^2}\D\dot\Delta_qu\cdot\dot\Delta_q u\,\mathrm{d}x
\\&
\leq \|\dot\Delta_qu\|_{L^2}
\bigl(\|\rho[\dot\Delta_q\mathbb{P}, u\cdot\nabla]u\|_{L^2}
+\|\rho[\dot\Delta_q\mathbb{P}, {\rho}^{-1}]
\bigl(\Delta u-\nabla\Pi\bigr)\|_{L^2}\bigr).
\end{align*}
Observing from   Lemma \ref{lem2.1} that
 \begin{align*}
 -\int_{\R^2}\D\dot\Delta_qu\cdot
\dot\Delta_q u\,\mathrm{d}x =&\int_{\R^2}|\nabla\dot\D_ju|^2\,\mathrm{d}x
\geq
\bar{c}2^{2q}\|\dot\Delta_qu\|_{L^2}^2.
\end{align*}
 we deduce  that for $c=\overline{c}/M$,
\begin{equation*} 
\begin{split}
&\frac{\mathrm{d}}{\mathrm{d}t}\|\sqrt{\rho}\dot\Delta_qu\|_{L^2}^2
+2c2^{2q}\|\sqrt{\rho}\dot\Delta_qu\|_{L^2}^2\\
&\lesssim\|\sqrt{\rho}\dot\Delta_qu\|_{L^2}\bigl(
\|[\dot\Delta_q\mathbb{P}, u\cdot\nabla]u\|_{L^2}
+\|[\dot\Delta_q\mathbb{P},{\rho}^{-1}]\bigl(\Delta u-\nabla\Pi\bigr)\|_{L^2}\bigr),
\end{split}
\end{equation*}
from which, we infer
\begin{equation}\label{3.5}
\begin{split}
&\|\sqrt{\rho}\dot\Delta_qu(t)\|_{L^2}
\lesssim
e^{-c2^{2q}t}\|\sqrt{\rho_0}\dot\Delta_qu_0\|_{L^2}
\\&
+\int_0^te^{-c2^{2q}(t-t')}
\big(\|[\dot\Delta_q\mathbb{P}, u\cdot\nabla]u\|_{L^2}
+\|[\dot\Delta_q\mathbb{P}, {\rho}^{-1}]\bigl(\Delta u-\nabla\Pi\bigr)\|_{L^2}\big)(\tau)\,\mathrm{d}\tau.
\end{split}
\end{equation}
As a consequence,  we deduce from Definition \ref{def2.2} that
\begin{equation}\label{3.6}
\begin{split}
\|\nabla u\|_{L_t^2(L^2)}\lesssim&\Bigl(\sum_{q\in\mathbb{Z}}\bigl(1-e^{-c2^{2q}t}\bigr)\|\dot\Delta_qu_0\|_{L^2}^2\Bigr)^{\frac{1}{2}}+\Bigr(\sum_{j\in\mathbb{Z}}\|[\dot\Delta_q\mathbb{P}, u\cdot\nabla]u\|_{L_t^1(L^2)}^2\Bigl)^{\frac{1}{2}}\\
&+\Bigl(\sum_{j\in\mathbb{Z}}\|[\dot\Delta_q\mathbb{P}, {\rho}^{-1}]\bigl(\Delta u-\nabla\Pi\bigr)\|_{L_t^1(L^2)}^2\Bigr)^{\frac{1}{2}}.
\end{split}
\end{equation}

In what follows, we shall handle term by term above.
We first get, by applying  \eqref{2.11} with $p=2,$ that
\beno
\Bigl(\sum_{q\in\mathbb{Z}}\|[\dot\Delta_q\mathbb{P}, u\cdot\nabla]u\|_{L_t^1(L^2)}^2\Bigr)^{\frac{1}{2}}
\lesssim\|\nabla u\|_{L^2_t(L^2)}^2.
\eeno
While it follows from  \eqref{2.26} that
\begin{align*}
\Bigl(\sum_{q\in\mathbb{Z}}
\|[\dot\Delta_q\mathbb{P}, {\rho}^{-1}]
\nabla\Pi\|_{L_t^1(L^2)}^2\Bigr)^{\frac{1}{2}}
\le
\int_0^t\sum_{q\in\mathbb{Z}}\|[\dot\Delta_q\mathbb{P}, {\rho}^{-1}]
\nabla\Pi\|_{L^2}d\tau
\lesssim
\|a\|_{L^\infty_t(\dot B^{\frac{2}{\lambda}}_{\lambda,2})}
\|\nabla\Pi\|_{L^1_t(L^2)},
\end{align*}
which together with  Lemma \ref{lem2.4} ensures that
 \begin{align*}
\Bigl(\sum_{q\in\mathbb{Z}}\|[\dot\Delta_q\mathbb{P}, {\rho}^{-1}]\nabla\Pi\|_{L_t^1(L^2)}^2\Bigr)^{\frac{1}{2}}
&\lesssim
\|a\|_{L^\infty_t(\dot B^{\frac{2}{\lambda}}_{\lambda,2})}
\Bigl(\|a-\dot S_ma\|_{\widetilde L^\infty_t(\dot{B}^{\frac{2}{\lambda}}_{\lambda,2})}
\|u\|_{L_{t}^{1}(\dot{B}_{p,1}^{1+\frac{2}{p}})}\notag\\
&+2^{m}\sqrt{t}\big(\| a\|_{L^{\infty}_t(L^\infty)}+\|a\|_{L^\infty_t(\dot{B}_{\lambda,\infty}^{\frac{2}{\lambda}})}\big)\|\nabla u\|_{L^{2}_{t}(L^2)}
+\|\nabla u\|_{L^2_t(L^2)}^2\Bigr).
\end{align*}

On the other hand, notice that
\begin{equation}\label{3.9}
[\dot\Delta_q\mathbb{P}, {\rho}^{-1}]\Delta u=[\dot\Delta_q\mathbb{P},a]\Delta u
=[\dot\Delta_q\mathbb{P},a-\dot S_ma]\Delta u+[\dot\Delta_q\mathbb{P},\dot S_ma]\Delta u.
\end{equation}
we deduce  from Lemma \ref{lem2.6} that
\begin{equation*}\label{3.10}
\bigl(\sum_{q\in\mathbb{Z}}\|[\dot\Delta_q\mathbb{P}, a-\dot S_ma]\Delta u\|_{L_t^1(L^2)}^2\bigr)^{\frac{1}{2}}\lesssim
\|a-\dot S_ma\|_{\widetilde L^\infty_t(\dot{B}_{\lambda,\infty}^{\frac{2}{\lambda}})}
\| u\|_{L_{t}^{1}(\dot{B}_{p,1}^{1+\frac{2}{p}})}.
\end{equation*}
Whereas it follows from Lemma \ref{lem2.7} that
 \begin{equation*}
 \begin{split}
\Bigl(\sum_{q\in\mathbb{Z}}\|[\dot\Delta_q\mathbb{P}, \dot S_ma]\Delta u\|_{L_t^1(L^2)}^2\Bigr)^{\frac{1}{2}}\lesssim 2^{m}\sqrt{t}\|a\|_{L^{\infty}_t(\dot{B}^{\frac{2}{\lambda}}_{\lambda,2})}\big(\|\nabla u\|_{L^{2}_{t}(L^2)}+\|u\|_{L_t^\infty(L^2)}^{\frac{1}{2}}\|u\|_{L_t^1(\dot{B}_{p,1}^{1+\frac{2}{p}})}^{\frac{1}{2}}\big).
\end{split}
\end{equation*}

 By substituting the above estimates into \eqref{3.6}, we obtain that for $2\leq p<\infty$,
 \begin{align*}
 \|\nabla u\|_{{L}_t^2(L^2)}
 \le& \Bigl(\sum_{q\in\mathbb{Z}}\bigl(1-e^{-c2^{2q}t}\bigr)\|\dot\Delta_qu_0\|_{L^2}^2\Bigr)^{\frac{1}{2}}+\bigl(1+\|a\|_{L^\infty_t(\dot{B}^{\frac{2}{\lambda}}_{\lambda,2} )}\bigr)\Bigl(
 \|a-\dot S_ma\|_{\widetilde L^\infty_t(\dot{B}^{\frac{2}{\lambda}}_{\lambda,2})}
\|u\|_{L_{t}^{1}(\dot{B}_{p,1}^{1+\frac{2}{p}})}
\\&\qquad\qquad
+2^{m}\sqrt{t}\| a\|_{L^{\infty}_t(\dot{B}^{\frac{2}{\lambda}}_{\lambda,2})}\bigl(\|\nabla u\|_{L^{2}_{t}(L^2)}+\|u\|_{L_t^\infty(L^2)}^{\frac{1}{2}}\|u\|_{L_t^1(\dot{B}_{p,1}^{1+\frac{2}{p}})}^{\frac{1}{2}}\bigr)
+\|\nabla u\|_{L^2_t(L^2)}^2\Bigr).
 \end{align*}
By inserting the estimates \eqref{2.31q} and \eqref{2.32} into the above inequality, for any $\eta>0,$ we find
\begin{equation}\label{3.14}
 \begin{split}
 \|\nabla u\|_{{L}_t^2(L^2)}
 \leq
& C\Bigl(\sum_{q\in\mathbb{Z}}\bigl(1-e^{-c2^{2q}t}\bigr)\|\dot\Delta_qu_0\|_{L^2}^2\Bigr)^{\frac{1}{2}}
+\eta\|u\|_{L_{t}^{1}(\dot{B}_{p,1}^{1+\frac{2}{p}})}\\&
 +C_\eta\bigl(1+\|u\|_{\widetilde{L}_t^1(\dot{H}^2)}\bigr)e^{C\|\nabla u\|_{L_t^1(L^\infty)}}\Bigl(
 \bigl(2^{2m}t+\|\nabla u\|_{L^2_t(L^2)}^2\bigr)\\
 &
 +
\Bigl(\bigl(\sum_{q\geq m}2^{\frac{4}{\lambda}q}
 \|\dot{\Delta}_qa_0\|_{L^\lambda}^2\bigr)^{\frac{1}{2}}
 +\bigl(e^{C\|\nabla u\|_{L_t^1(L^\infty)}}-1\bigr)\Bigr)
\|u\|_{L_{t}^{1}(\dot{B}_{p,1}^{1+\frac{2}{p}})}\Bigr).
\end{split}
 \end{equation}

 \no {\bf Step 2.} The estimate of
  $\|u\|_{L_{t}^{1}(\dot{B}_{p,1}^{1+\frac{2}{p}})}$.

 We first get, by a similar derivation of   \eqref{2.39-aa1}, that
\begin{align*}
\frac{d}{dt}
\|\rho^{\frac{1}{p}}\dot{\Delta}_{q}u\|_{L^p}^p+& pc_p2^{2j}\|\dot{\Delta}_{q}u\|_{L^p}^p
\leq
\|\dot{\Delta}_{q}u\|_{L^p}^{p-1}
\bigl(\bigl\|[\dot\Delta_q\mathbb{P}, u\cdot\nabla]u\bigr\|_{L^p}
+\|[\dot\Delta_q\mathbb{P}, {\rho}^{-1}]
\bigl(\Delta u-\nabla\Pi\bigr)\|_{L^p}\bigr).
\end{align*} from which and a similar derivation of \eqref{3.5}, we infer
\beq\label{3.17}
\begin{split}
\|\dot{\Delta}_{q}u(t)\|_{L^p}
\lesssim e^{-c_p2^{2j}t}\|\dot{\Delta}_{q}u_0\|_{L^p}
&+\int_{0}^te^{-c_p2^{2j}(t-\tau)}\|[\dot\Delta_q\mathbb{P}, u\cdot\nabla]u\|_{L^p}\,d\tau\\&
+\int_{0}^te^{-c_p2^{2j}(t-\tau)}\|[\dot\Delta_q\mathbb{P}, {\rho}^{-1}]\bigl(\Delta u-\nabla\Pi\bigr)\|_{L^p}\,d\tau.
\end{split}\eeq
Then we deduce from Definition \ref{def2.2} that
\begin{equation}\label{3.18}
\begin{split}
\|u\|_{{L}_{t}^{1}(\dot{B}_{p,1}^{1+\frac{2}{p}})}
\lesssim &
\sum_{q\in\mathbb{Z}}2^{q\bigl(\frac{2}{p}-1\bigr)}\bigl(1-e^{-c_p2^{2q}t}\bigr)\|\dot{\Delta}_{q}u_0\|_{L^p}\\ &+\sum_{q\in\mathbb{Z}}2^{q\bigl(\frac{2}{p}-1\bigr)}\Bigr(\|[\dot\Delta_q\mathbb{P}, u\cdot\nabla]u\|_{L_t^1(L^p)}
+\|[\dot\Delta_q\mathbb{P}, {\rho}^{-1}]\bigl(\Delta u-\nabla\Pi\bigr)\|_{L_t^1(L^p)}\Bigl).
\end{split}
 \end{equation}
It follows  from \eqref{2.11} that
\beno
\sum_{q\in\mathbb{Z}}2^{q\bigl(-1+\frac{2}{p}\bigr)}\|[\dot\Delta_q\mathbb{P}, u\cdot\nabla]u\|_{L_t^1(L^p)}\lesssim\|\nabla u\|_{L_{t}^2(L^2)}^2.
\eeno
Whereas we get, by applying Lemmas \ref{lem2.4}-\ref{lem2.5}, that
\beno
\sum_{q\in\mathbb{Z}}2^{q(-1+\frac{2}{p})}
\|[\dot\Delta_q\mathbb{P}, {\rho}^{-1}]\nabla\Pi\|_{L_t^1(L^p)}
\lesssim
\|a\|_{\widetilde{L}_t^{\infty}(\dot{B}_{\lambda,2}^{\frac{2}{\lambda}})}
\|\nabla\Pi\|_{L_{t}^1(L^2)},
\eeno
and
\beq\label{3.21}
\begin{split}
\|\nabla\Pi\|_{L_{t}^1(L^2)}\lesssim&\|a-\dot{S}_m a\|_{\widetilde{L}_{t}^{\infty}(\dot{B}_{\lambda,2}^{\frac{2}{\lambda}})}\|u\|_{L_{t}^1(\dot{B}_{p,1}^{1+\frac{2}{p}})}+\|\nabla u\|_{L_{t}^2(L^2)}^2\\
&+2^{m}\sqrt{t}\bigl(\| a\|_{\widetilde{L}_{t}^{\infty}(L^\infty)}+\|a\|_{\widetilde{L}_{t}^{\infty}(\dot{B}_{\lambda,\infty}^{\frac{2}{\lambda}})}\bigr)
\|\nabla u\|_{L_{t}^2(L^2)}.
\end{split}\eeq
 And it follows from Lemmas \ref{lem2.6}-\ref{lem2.7} that
\beno
\sum_{q\in\mathbb{Z}}2^{q\bigl(-1+\f2p\bigr)}\|[\dot{\Delta}_q\mathbb{P}, a-\dot{S}_m a]\Delta u\|_{L_t^1(L^p)}
\lesssim
\|u\|_{L^1_t(\dot{B}_{p,1}^{1+\frac{2}{p}})}
\|a-\dot{S}_m a\|_{{L}_{t}^{\infty}(\dot{B}_{\lambda,\infty}^{\frac{2}{\lambda}})}
\eeno
and
\begin{align*}
\sum_{q\in\mathbb{Z}}2^{q\bigl(-1+\f2p\bigr)}
\|[\dot{\Delta}_q\mathbb{P},\dot{S}_m a]\Delta u\|_{L_{t}^1(L^p)}
\lesssim &2^{m}\sqrt{t}\| a\|_{{L}_t^\infty(\dot{B}_{\lambda,2}^{\frac{2}{\lambda}})}\bigl(\|\nabla u\|_{L_t^2(L^2)}+\|u\|_{{L}^\infty_t(L^2)}^{\frac{1}{2}}
\|u\|_{L_{t}^{1}(\dot{B}_{p,1}^{1+\frac{2}{p}})}^{\frac{1}{2}}\bigr).
\end{align*}

By substituting the above estimates  into \eqref{3.18}, we achieve
\begin{align*}
\begin{split}
&\|u\|_{{L}_{t}^{1}(\dot{B}_{p,1}^{1+\frac{2}{p}})}
\leq C
\sum_{q\in\mathbb{Z}}2^{q\bigl(\frac{2}{p}-1\bigr)}\bigl(1-e^{-c_p2^{2j}t}\bigr)
\|\dot{\Delta}_{q}u_0\|_{L^p}
+\eta\|u\|_{L_{t}^{1}(\dot{B}_{p,1}^{1+\frac{2}{p}})}
\\&+
C_\eta\bigl(1+\|a\|_{\widetilde L_t^{\infty}(\dot{B}_{\lambda,2}^{\frac{2}{\lambda}})}\bigr)
\Bigl(\|a-\dot{S}_m a\|_{\widetilde L_{t}^{\infty}(\dot{B}_{\lambda,2}^{\frac{2}{\lambda}})}
\|u\|_{L_{t}^1(\dot{B}_{p,1}^{1+\frac{2}{p}})}
+2^{2m}t\| a\|_{\widetilde L_{t}^{\infty}(\dot{B}_{\lambda,2}^{\frac{2}{\lambda}})}^2
+\|\nabla u\|_{L_t^2(L^2)}^2\Bigr).
\end{split}
\end{align*}
By taking $\eta$ to be small enough and substituting  the estimates \eqref{2.31q} and \eqref{2.32} into the resulting inequaity, we arrive at
\begin{equation}\label{3.25}
\begin{split}
\|u\|_{{L}_{t}^{1}(\dot{B}_{p,1}^{1+\frac{2}{p}})}
\lesssim & \sum_{q\in\mathbb{Z}}2^{q\bigl(\frac{2}{p}-1\bigr)}\bigl(1-e^{-c_p2^{2j}t}\bigr)
\|\dot{\Delta}_{q}u_0\|_{L^p}\\
&+\bigl(1+\|u\|_{\widetilde{L}_t^1(\dot{H}^2)}\bigr)
e^{C\|\nabla u\|_{L_t^1(L^\infty)}}
 \Bigl(\bigl(2^{2m}t+\|\nabla u\|_{L^2_t(L^2)}^2\bigr)\\
 &+
\bigl(\bigl(\sum_{q\geq m}2^{\frac{4}{\lambda}q}
 \|\dot{\Delta}_q a_0\|_{L^\lambda}^2\bigr)^{\frac{1}{2}}+ \bigl(e^{C\|\nabla u\|_{L_t^1(L^\infty)}}-1\bigr)\bigr)
 \|u\|_{L_{t}^{1}(\dot{B}_{p,1}^{1+\frac{2}{p}})}\Bigr).
\end{split}
 \end{equation}

  \no {\bf Step 3.} The closing of the estimate.

In view of \eqref{3.5},  we get, by using a similar derivation of
  \eqref{3.14}, that
\begin{equation}\label{3.26}
\begin{split}
\|u\|_{\widetilde{L}_t^1(\dot{H}^2)}
 \leq& C \Bigl(\sum_{q\in\mathbb{Z}}\bigl(1-e^{-c_p2^{2q}t}\bigr)
\|\dot{\Delta}_{q}u_0\|_{L^2}^2\Bigr)^{\frac{1}{2}}+\eta\|u\|_{L_{t}^{1}(\dot{B}_{p,1}^{1+\frac{2}{p}})}\\
&
 +C_\eta\bigl(1+\|u\|_{\widetilde{L}_t^1(\dot{H}^2)}\bigr)
e^{C\|\nabla u\|_{L_t^1(L^\infty)}}
 \Bigl(2^{2m}t+\|\nabla u\|_{L^2_t(L^2)}^2\\
 &+
\bigl(\bigl(\sum_{q\geq m}2^{\frac{4}{\lambda}q}
 \|\dot{\Delta}_q a_0\|_{L^\lambda}^2\bigr)^{\frac{1}{2}}+ \bigl(e^{C\|\nabla u\|_{L_t^1(L^\infty)}}-1\bigr)\bigr)
 \|u\|_{L_{t}^{1}(\dot{B}_{p,1}^{1+\frac{2}{p}})}\Bigr).
 \end{split}
 \end{equation}

Let us denote
\begin{equation*}
\begin{split}
U(t)\eqdefa
\|\nabla u\|_{{L}_t^2(L^2)}
 +\|u\|_{{L}_{t}^{1}(\dot{B}_{p,1}^{1+\frac{2}{p}})}
 +\|u\|_{\widetilde{L}_t^1(\dot{H}^2)}.
\end{split}
 \end{equation*}
Then by summarizing  the estimates \eqref{3.14}, \eqref{3.25} and \eqref{3.26}, we achieve
\begin{equation}\label{3.27}
\begin{split}
 U(t)
 &\leq
 C\sum_{q\in\mathbb{Z}}2^{q\bigl(\frac{2}{p}-1\bigr)}\bigl(1-e^{-c_p2^{2q}t}\bigr)
 \|\dot{\Delta}_{q}u_0\|_{L^p}
 +C\Bigl(\sum_{q\in\mathbb{Z}}\bigl(1-e^{-c2^{2q}t}\bigr)
\|\dot\Delta_qu_0\|_{L^2}^2\Bigr)^{\frac{1}{2}}
\\&
+C_\eta e^{CU(t)}\bigl(1+U(t)\bigr)
\Bigl(2^{2m}t+U^2(t)
+
 \big(\bigl(\sum_{q\geq m}2^{\frac{4}{\lambda}q}
 \|\dot{\Delta}_qa_0\|_{L^\lambda}^2\bigr)^{\frac{1}{2}}
 +\bigl(e^{CU(t)}-1\bigr)\big)
U(t)\Bigr).
 \end{split}
 \end{equation}
  Then by first taking $m\geq m_0$ with $m_0$ being large enough, and then $T_1$  being sufficiently small  in \eqref{3.27},
   we deduce that there exists a sufficiently small constant $C_0$  so that
\begin{equation}\label{3.27-bb}
\begin{split}
 &\|\nabla u\|_{{L}_{T_1}^2(L^2)}
 +\|u\|_{\widetilde L^1_{T_1}(\dot H^2)}
 +\|u\|_{{L}_{T_1}^{1}(\dot{B}_{p,1}^{1+\frac{2}{p}})}
 \le C_0,
\end{split}
 \end{equation}
which along with \eqref{3.27} ensures
\begin{equation}\label{3.27-aa}
\begin{split}
&\|\nabla u\|_{L_{T_1}^2(L^2)}+\|u\|_{\widetilde L^1_{T_1}(\dot H^2)}
+\|u\|_{{L}_{T_1}^{1}(\dot{B}_{p,1}^{1+\frac{2}{p}})}\\
&\le  C\sum_{q\in\mathbb{Z}}2^{q\bigl(\frac{2}{p}-1\bigr)}\bigl(1-e^{-c_p2^{2q}\,T_1}\bigr)
 \|\dot{\Delta}_{q}u_0\|_{L^p}
 +C\Bigl(\sum_{q\in\mathbb{Z}}\bigl(1-e^{-c2^{2q}\,T_1}\bigr)
\|\dot\Delta_qu_0\|_{L^2}^2\Bigr)^{\frac{1}{2}}
\\&\quad
+C \bigg(2^{2m}\,T_1+ \bigl(\sum_{q\geq m}2^{\frac{4}{\lambda}q}
 \|\dot{\Delta}_qa_0\|_{L^\lambda}^2\bigr)^{\frac{1}{2}}\bigg).
 \end{split}
 \end{equation}

By summarizing the estimates \eqref{3.21}, \eqref{2.31q}, \eqref{2.32} and \eqref{3.27-aa}, we obtain \eqref{3.2}
and
\begin{equation}\label{3.27-cc}
\begin{split}
 &\|\nabla u\|_{{L}_{T_1}^2(L^2)}
 +\|u\|_{\widetilde L^1_{T_1}(\dot H^2)}
 +\|u\|_{{L}_{T_1}^{1}(\dot{B}_{p,1}^{1+\frac{2}{p}})}+\|\nabla\Pi\|_{{L}_{T_1}^1(L^2)}
 \le C\,C_0,
\end{split}
 \end{equation}
which completes the proof  of Proposition \ref{prop3.2}.
 \end{proof}

\begin{prop}\label{prop3.3}
{\sl Under the assumptions of Proposition \ref{prop3.2}, for any $t\in[0,T_1]$ with $T_1$ being determined by Proposition \ref{prop3.2},
we have
\begin{align}\label{3.28}
&\|u\|_{\widetilde{L}_{t}^{\infty}(\dot{B}_{p,1}^{-1+\frac{2}{p}})}
+\|\p_tu\|_{\widetilde L_{t}^1(L^2)}\leq C_{\rm in}
\andf \|\nabla\Pi\|_{L^1_t(\dot{B}_{p,1}^{-1+\frac{2}{p}})}
+\|\p_tu\|_{L^1_t(\dot{B}_{p,1}^{-1+\frac{2}{p}})}
\leq
C_{\rm in},
\end{align}
where the constant $C_{\rm in}$ depends only on the initial data $(\rho_0, u_0).$}
\end{prop}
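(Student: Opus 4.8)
The plan is to upgrade the $L^1_{T_1}(\dot B^{1+2/p}_{p,1})$-type bounds already obtained in Proposition \ref{prop3.2} to $\widetilde L^\infty$-in-time control of $u$ in $\dot B^{-1+2/p}_{p,1}$, together with the $L^1$-in-time control of $\partial_t u$ and $\nabla\Pi$. The starting point is the same frequency-localized energy identity derived in Step~1 of Proposition \ref{prop3.2}: applying $\dot\Delta_q\mathbb{P}$ to the momentum equation of \eqref{1.2} divided by $\rho$ and using a commutator process gives \eqref{3.5}, i.e.
\begin{equation*}
\|\sqrt{\rho}\dot\Delta_qu(t)\|_{L^2}
\lesssim
e^{-c2^{2q}t}\|\sqrt{\rho_0}\dot\Delta_qu_0\|_{L^2}
+\int_0^te^{-c2^{2q}(t-\tau)}
\bigl(\|[\dot\Delta_q\mathbb{P}, u\cdot\nabla]u\|_{L^2}
+\|[\dot\Delta_q\mathbb{P}, \rho^{-1}](\Delta u-\nabla\Pi)\|_{L^2}\bigr)\,d\tau.
\end{equation*}
The analogous $L^p$ version is \eqref{3.17}. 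First I would take $L^\infty_t$ in time (rather than $L^1_t$ or $L^2_t$), multiply by $2^{q(-1+2/p)}$, and sum over $q\in\mathbb{Z}$ in $\ell^1$, using Young's inequality in time applied to $\|e^{-c2^{2q}\cdot}\|_{L^\infty_t}\le1$ on the convolution term. This yields
\begin{equation*}
\|u\|_{\widetilde L^\infty_t(\dot B^{-1+\frac2p}_{p,1})}
\lesssim \|u_0\|_{\dot B^{-1+\frac2p}_{p,1}}
+\sum_{q\in\mathbb{Z}}2^{q(-1+\frac2p)}\bigl(\|[\dot\Delta_q\mathbb{P}, u\cdot\nabla]u\|_{L^1_t(L^p)}
+\|[\dot\Delta_q\mathbb{P}, \rho^{-1}](\Delta u-\nabla\Pi)\|_{L^1_t(L^p)}\bigr),
\end{equation*}
and every term on the right has already been controlled in Step~1 and Step~2 of Proposition \ref{prop3.2}: Lemma \ref{lem2.3} bounds the convection commutator by $\|\nabla u\|_{L^2_t(L^2)}^2$, while Lemmas \ref{lem2.5}--\ref{lem2.7} together with the pressure estimate \eqref{3.21}, the transport estimates \eqref{2.31q}--\eqref{2.32}, and \eqref{3.27-cc} bound the density commutator by $C_{\rm in}$. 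This gives the first bound in \eqref{3.28}.

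Next I would treat $\nabla\Pi$ and $\partial_t u$ in $L^1_t(\dot B^{-1+2/p}_{p,1})$. For the pressure, the plan is to exactly reuse Step~2 of the proof of Proposition \ref{prop2.4}: apply $\dive\dot\Delta_q$ to the momentum equation, write $\dive\bigl((1+a)\nabla\dot\Delta_q\Pi\bigr)=-\dot\Delta_q\dive(u\cdot\nabla u)+\dot\Delta_q\dive(a\Delta u)+\dive[\dot\Delta_q,a]\nabla\Pi$, test against $|\dot\Delta_q\Pi|^{p-2}\dot\Delta_q\Pi$ using the lower bound $1+a\ge c$, and conclude via Lemma \ref{lem2.5} and Proposition \ref{prop2.2} that
\begin{equation*}
\|\nabla\Pi\|_{L^1_t(\dot B^{-1+\frac2p}_{p,1})}
\lesssim \|u\cdot\nabla u\|_{L^1_t(\dot B^{-1+\frac2p}_{p,1})}
+\bigl(\|a\|_{L^\infty}+\|a\|_{\widetilde L^\infty_t(\dot B^{2/\lambda}_{\lambda,\infty})}\bigr)\|u\|_{L^1_t(\dot B^{1+\frac2p}_{p,1})}
+\|a\|_{\widetilde L^\infty_t(\dot B^{2/\lambda}_{\lambda,2})}\|\nabla\Pi\|_{L^1_t(L^2)}.
\end{equation*}
The term $\|u\cdot\nabla u\|_{L^1_t(\dot B^{-1+2/p}_{p,1})}$ is handled by the law of product together with the interpolation $\|u\|_{\dot B^{2/p}_{p,1}}\lesssim\|u\|_{L^2}^{1/2}\|u\|_{\dot B^{1+2/p}_{p,1}}^{1/2}$ (as in the proof of Lemma \ref{lem2.7}) and the energy law, which closes everything against $\|\nabla u\|_{L^2_t(L^2)}$, $\|u\|_{L^1_t(\dot B^{1+2/p}_{p,1})}$ and $\|\nabla\Pi\|_{L^1_t(L^2)}$ — all bounded by $C_{\rm in}$ thanks to \eqref{3.2} and \eqref{3.27-cc}. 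Then, directly from the momentum equation $\partial_t u=-\mathbb{P}(u\cdot\nabla u)+\mathbb{P}(\rho^{-1}(\Delta u-\nabla\Pi))$, the bound on $\partial_t u$ in $L^1_t(\dot B^{-1+2/p}_{p,1})$ follows from the bounds just obtained plus the law of product for $a\Delta u$ as in Step~2 of Proposition \ref{prop2.4}. For $\partial_t u\in\widetilde L^1_t(L^2)$, I would instead localize $\partial_t\dot\Delta_q u$ and use the $L^2$ energy identity: $\partial_t\dot\Delta_q u=\rho^{-1}\Delta\dot\Delta_q u - u\cdot\nabla\dot\Delta_q u-[\dot\Delta_q\mathbb{P},u\cdot\nabla]u+[\dot\Delta_q\mathbb{P},\rho^{-1}](\Delta u-\nabla\Pi)-\rho^{-1}\nabla\dot\Delta_q\Pi$, estimate each piece in $L^1_t(L^2)$, sum in $\ell^1$ over $q$, using $\|u\|_{\widetilde L^1_t(\dot H^2)}\le C_0$, Lemmas \ref{lem2.3} and \ref{lem2.5}--\ref{lem2.7}, the boundedness of $\rho^{-1}=1+a$ in $L^\infty$, and the pressure bound $\|\nabla\Pi\|_{L^1_t(L^2)}\le C_{\rm in}$.

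The main obstacle is purely bookkeeping rather than conceptual: one must be careful that the small-time/high-frequency smallness exploited in Proposition \ref{prop3.2} to absorb the quadratic and density-commutator terms is still available here, so that the commutator contributions get bounded by $C_{\rm in}$ (a constant depending only on the initial norms) rather than re-entering the left side. Since $T_1$ and $m_0$ have already been fixed in Proposition \ref{prop3.2} so that \eqref{3.27-cc} holds, all the nonlinear/commutator terms appearing in the $\widetilde L^\infty_t(\dot B^{-1+2/p}_{p,1})$ and $L^1_t$ estimates are controlled by products of quantities that are themselves already bounded by $C_0$ or $C_{\rm in}$; hence no further smallness or Gronwall argument is needed, and one simply collects the inequalities. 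The only genuine care-point is the $\widetilde L^1_t(L^2)$ bound on $\partial_t u$, where the term $\|\rho^{-1}\Delta u\|_{\widetilde L^1_t(L^2)}\lesssim(1+\|a\|_{L^\infty})\|u\|_{\widetilde L^1_t(\dot H^2)}$ must use the Chemin--Lerner norm (not the ordinary $L^1_t\dot H^2$ norm) to match $\|u\|_{\widetilde L^1_{T_1}(\dot H^2)}\le C_0$ from \eqref{3.27-bb}; this is the reason the statement records $\partial_t u\in\widetilde L^1_t(L^2)$ with a tilde.
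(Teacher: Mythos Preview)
Your proposal is correct and follows essentially the same route as the paper. The paper also obtains $\|u\|_{\widetilde L^\infty_t(\dot B^{-1+2/p}_{p,1})}$ by taking $L^\infty_t$ in \eqref{3.17}, derives the pressure bound in $L^1_t(\dot B^{-1+2/p}_{p,1})$ from the localized elliptic equation tested against $|\dot\Delta_q\Pi|^{p-2}\dot\Delta_q\Pi$, and reads off $\partial_t u\in L^1_t(\dot B^{-1+2/p}_{p,1})$ directly from the momentum equation.

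The only noticeable difference is the $\widetilde L^1_t(L^2)$ bound on $\partial_t u$: the paper estimates directly from the unlocalized momentum equation $\partial_t u=-u\cdot\nabla u+(1+a)(\Delta u-\nabla\Pi)$, writing $\|\partial_t u\|_{\widetilde L^1_t(L^2)}\lesssim(1+\|a\|_{L^\infty})\bigl(\|\Delta u\|_{\widetilde L^1_t(L^2)}+\|\nabla\Pi\|_{L^1_t(L^2)}+\|u\|_{L^2_t(L^\infty)}\|\nabla u\|_{L^2_t(L^2)}\bigr)$, whereas you localize first and estimate each piece of $\partial_t\dot\Delta_q u$. Your approach is actually a bit more honest about how the Chemin--Lerner norm interacts with multiplication by $(1+a)$; it cleanly reduces everything to $\|u\|_{\widetilde L^1_t(\dot H^2)}$ and the commutator lemmas. (Minor points: the term $-\rho^{-1}\nabla\dot\Delta_q\Pi$ in your decomposition vanishes since $\mathbb{P}\nabla\Pi=0$, and the $\ell^r$ index for $\widetilde L^1_t(L^2)$ is $r=2$, not $1$ --- but since the commutator lemmas give $\ell^1$ control, which is stronger, this is harmless.) For the pressure, the paper bounds $\|u\otimes u\|_{\dot B^{2/p}_{p,1}}$ via the algebra property and then interpolates in time using the freshly obtained $\|u\|_{\widetilde L^\infty_t(\dot B^{-1+2/p}_{p,1})}$, while your interpolation against $\|u\|_{L^2}$ is equally valid and avoids that self-reference.
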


\begin{proof} We first deduce from
 \eqref{3.17} and Definition \ref{def2.2}, that
\begin{align*}
&\|u\|_{\widetilde{L}_{t}^{\infty}(\dot{B}_{p,1}^{-1+\frac{2}{p}})}\le\|u_0\|_{\dot{B}_{p,1}^{-1+\frac{2}{p}}}
+\sum_{q\in\mathbb{Z}}2^{q\bigl(\frac{2}{p}-1\bigr)}\big(\|[\dot\Delta_q\mathbb{P}, u\cdot\nabla]u\|_{L_t^1(L^p)}
+\|[\dot\Delta_q\mathbb{P}, {\rho}^{-1}]\bigl(\Delta u-\nabla\Pi\bigr)\|_{L_t^1(L^p)}\big),
\end{align*}
from which, we get, by a similar derivation of \eqref{3.25}, that
\begin{align*}
\|u\|_{\widetilde{L}_{t}^{\infty}(\dot{B}_{p,1}^{-1+\frac{2}{p}})}
\lesssim &
 \|u_0\|_{\dot{B}_{p,1}^{-1+\frac{2}{p}}}+
\|u\|_{L_{t}^{1}(\dot{B}_{p,1}^{1+\frac{2}{p}})}\notag\\
&+\bigl(1+\|u\|_{\widetilde{L}_t^1(\dot{H}^2)}\bigr)e^{C\|\nabla u\|_{L_t^1(L^\infty)}}
 \Bigl(\bigl(2^{2m}t+\|\nabla u\|_{L^2_t(L^2)}^2\bigr)\\
 &+\big(
\bigl(\sum_{q\geq m}2^{\frac{4}{\lambda}q}
 \|\dot{\Delta}_q\nabla a_0\|_{L^\lambda}^2\bigr)^{\frac{1}{2}}+\bigl(e^{C\|\nabla u\|_{L_t^1(L^\infty)}}-1\bigr)\big)
\|u\|_{L_{t}^{1}(\dot{B}_{p,1}^{1+\frac{2}{p}})}\Bigr),
\end{align*}
which together with \eqref{ener} and \eqref{3.27-cc} ensures that
\begin{align*}
\|u\|_{\widetilde{L}_{t}^{\infty}(\dot{B}_{p,1}^{-1+\frac{2}{p}})}\leq C_{\rm in}.
\end{align*}

While in view of  the momentum equations in \eqref{1.2}, \eqref{ener} and \eqref{3.2}, we infer
\begin{align*}
\|\p_tu\|_{\widetilde L_t^1(L^2)}
\lesssim
(1+\|a\|_{L_t^\infty(L^\infty)})\bigl(\|\Delta u\|_{\widetilde L_t^1(L^2)}+\|\nabla \Pi\|_{L^1_t(L^2)}+\|u\|_{L_t^2(L^\infty)}\|\nabla u\|_{L^2_t(L^2)}\bigr)\leq C_{\rm in},
\end{align*}
where we used  the fact that
\begin{align*}
\|u\|_{L_t^2(L^\infty)}\leq\|u\|_{L_t^2(\dot{B}_{p,1}^{\frac{2}{p}})}
\lesssim
\|u\|_{L_t^\infty(\dot{B}_{p,1}^{-1+\frac{2}{p}})}
+\|u\|_{L_t^1(\dot{B}_{p,1}^{1+\frac{2}{p}})}.
\end{align*}
This leads to the first inequality of \eqref{3.28}.

To estimate the pressure function $\Pi,$ we get, by applying the
operator $\dv$ to the momentum equation of \eqref{1.3} and using $\dv\, u=0,$ that
$$
\dv[(1+a)\nabla\Pi]=-\dv[(u\cdot\nabla)u] +\dv(a\Delta u),
$$
from which, we infer
\begin{equation*}\label{3.31}
\begin{split}
\dv[(1+a)\nabla\dot\Delta_q\Pi] =-\dot\Delta_q\dv[(u \cdot \grad)u]
+\dot\Delta_q\dv(a\Delta u)+\dv[\dot\Delta_q,a]\nabla\Pi.
\end{split}
\end{equation*}
By taking $L^2$ inner product of the  above equation with
$|\dot\Delta_q\Pi|^{p-2}\dot\Delta_q\Pi$ and using
 $\dv\, u=0,$
we deduce  from \eqref{1.4} that for $p>1$
\begin{equation*}
\begin{split}
2^{2q}\|\dot\Delta_q\Pi\|_{L^p}^p
&\lesssim
-\int_{\R^3}\dv((1+a)\dot\Delta_q\nabla\Pi)
 |\dot\Delta_q\Pi|^{p-2}\dot\Delta_q\Pi \,dx \\
&\lesssim
2^q\|\dot\Delta_q\Pi\|_{L^p}^{p-1}
\bigl(2^q\|\dot\Delta_q(u\otimes u)\|_{L^p}
+\|\dot\Delta_q(a\Delta u)\|_{L^p}+\|[\dot\Delta_q,a]\nabla\Pi\|_{L^p}\bigr),
\end{split}
\end{equation*}
which implies
\begin{equation}\label{presure-crit-1}
\begin{split}
\|\nabla\Pi\|_{\dot B^{{-1+\f2p}}_{p,1}}
\lesssim &
\|u\otimes u\|_{\dot B^{\frac{2}{p}}_{p,1}}
+\|a\Delta u\|_{\dot B^{-1+\frac{2}{p}}_{p,1}}
+\sum_{q\in\mathbb{Z}}2^{q\bigl(-1+\f2p\bigr)}\|[\dot\Delta_q,a]\nabla\Pi\|_{L^p}.
\end{split}
\end{equation}
Since $\dot B^{\frac{2}{p}}_{p,1}(\mathbb{R}^2)$ is an Banach algebra for $p<+\infty$,  one has
$$
\|u\otimes u\|_{\dot B^{\frac{2}{p}}_{p,1}}
\lesssim
\|u\|_{\dot B^{\frac{2}{p}}_{p,1}}^2.
$$
While it follows from the law of product in Besov space and Lemma \ref{lem2.5} that
\begin{align*}
\|a\Delta u\|_{\dot B^{-1+\frac{2}{p}}_{p,1}}
+\sum_{q\in\mathbb{Z}}2^{q\bigl(-1+\f2p\bigr)}\|[\dot\Delta_q,a]\nabla\Pi\|_{L^p}
\leq &\|a\|_{L^\infty_t(L^\infty\cap\dot B^{\frac{2}{\lambda}}_{\lambda,\infty})}
\|u\|_{L^1_t(\dot B^{1+\frac{2}{p}}_{p,1})}\\
&+\|a\|_{L^\infty_t(\dot B^{\frac{2}{\lambda}}_{\lambda,2})}\|\nabla\Pi\|_{L^1_t(L^2)}.\end{align*}
By substituting the above estimates and \eqref{3.27-cc} into \eqref{presure-crit-1},  we deduce from \eqref{2.31q} that
\begin{equation}\label{PMPZ}
\begin{aligned}
\|\nabla\Pi\|_{L^1_t(\dot B^{{-1+\f2p}}_{p,1})}
&\lesssim
\|u\|_{L^2_t(\dot B^{\frac{2}{p}}_{p,1})}^2
+\|a\|_{L^\infty_t(L^\infty\cap\dot B^{\frac{2}{\lambda}}_{\lambda,\infty})}
\|u\|_{L^1_t(\dot B^{1+\frac{2}{p}}_{p,1})}
+\|a\|_{L^\infty_t(\dot B^{\frac{2}{\lambda}}_{\lambda,2})}\|\nabla\Pi\|_{L^1_t(L^2)}
\\&
\lesssim
\|u\|_{\widetilde L^\infty_t(\dot B^{-1+\frac{2}{p}}_{p,1})}
\|u\|_{L^1_t(\dot B^{1+\frac{2}{p}}_{p,1})}\\
&\qquad+\|a\|_{L^\infty_t(\dot B^{\frac{2}{\lambda}}_{\lambda,2}\cap L^\infty)}\bigl(
\|u\|_{L^1_t(\dot B^{1+\frac{2}{p}}_{p,1})}
+\|\nabla\Pi\|_{L^1_t(L^2)}\bigr)
\lesssim
C_{\rm in}.
\end{aligned}
\end{equation}

Finally we deduce from the
 momentum equation of \eqref{1.3} and the law of product in Besov spaces that
\begin{equation}\label{UMPZ}
\begin{aligned}
\|\p_tu\|_{L^1_t(\dot B^{-1+\frac{2}{p}}_{p,1})}
&\lesssim
\|u\|_{L^2_t(\dot B^{\frac{2}{p}}_{p,1})}^2
+\bigl(1+\|a\|_{L^\infty_t(L^\infty\cap\dot B^{\frac{2}{\lambda}}_{\lambda,\infty})}\bigr)
\bigl(\|u\|_{L^1_t(\dot B^{1+\frac{2}{p}}_{p,1})}
+\|\nabla\Pi\|_{L^1_t(\dot B^{-1+\frac{2}{p}}_{p,1})}\bigr)\\
&\lesssim
C_{\rm in}.
\end{aligned}
\end{equation}
which together with \eqref{PMPZ} ensures the second inequality of \eqref{3.28}.
This completes the proof of Proposition \ref{prop3.3}.
\end{proof}

\renewcommand{\theequation}{\thesection.\arabic{equation}}
\setcounter{equation}{0}

\section{The proof of Theorem \ref{thm1.1}}\label{sect-global}

In this section, we present the proof of  Theorem \ref{thm1.1}.

\begin{proof}[Proof  of Theorem \ref{thm1.1}] We divide the proof of  Theorem \ref{thm1.1}
into two steps.

\no{\bf Step 1.} Existence of strong solutions.

We first mollify the initial data
to be
\beq \label{o.1} a_{0,n}\eqdefa\,a_0\ast j_n,\andf
u_{0,n}\eqdefa u_0\ast j_n,\eeq where
$j_n(|x|)=n^{2}j(|x|/n)$ is the standard Friedrich's mollifier. Then
we deduce  from the standard well-posedness theory of inhomogeneous
Navier-Stokes system (see \cite{danchin04, PZZ} for instance) that
\eqref{1.2}  has a unique global
solution $(\rho_n,u_n,\na\Pi_n).$  It is easy to observe from \eqref{o.1} that
\beno
\|a_{0,n}\|_{L^\infty\cap\dot B^{\frac{2}{\lambda}}_{\lambda,2}}
\le
C\|a_0\|_{L^\infty\cap\dot B^{\frac{2}{\lambda}}_{\lambda,2}}
\andf
\|u_{0,n}\|_{L^2\cap\dot B^{-1+\frac{2}{p}}_{p,1}}\leq C\|u_0\|_{ L^2\cap\dot{B}^{-1+\frac{2}{p}}_{p,1}}.
\eeno
Then under the assumptions of Theorem \ref{thm1.1}, we deduce  from the proof of Propositions
\ref{prop3.2} and  \ref{prop3.3} that there exists a positive time $T_1$  so that
\beq\label{unif-appro-soln-1}
\begin{aligned}
&\|u_n\|_{\widetilde L^\infty_{T_1}(\dot B^{-1+\frac{2}{p}}_{p,1})
\cap L^1_{T_1}(\dot B^{1+\frac{2}{p}}_{p,1})\cap\widetilde L^1_{T_1}(\dot H^2)}
+\|\partial_tu_n\|_{\widetilde L^1_{T_1}(L^2)\cap L^1_{T_1}(\dot B^{-1+\frac{2}{p}}_{p,1})}+\|\nabla\Pi_n\|_{L^1_{T_1}(L^2)\cap L^1_{T_1}(\dot B^{-1+\frac{2}{p}}_{p,1})}\\
&+
\|a_{n}\|_{\widetilde L^\infty_{T_1}(\dot B^{\frac{2}{\lambda}}_{\lambda,2})\cap L^\infty_{T_1}(L^\infty)}
\leq C_{\rm in}\andf M_1\leq \rho_n(t,x)\leq M_2.
\end{aligned}
\eeq

 With \eqref{unif-appro-soln-1}, we get, by using a standard
compactness argument (see \cite{DAN-03} for instance), that \eqref{1.2} has a solution
$(\rho,u,\na\Pi)$ on $[0,T_1]$ so that
\begin{equation}\label{space-soln-exist}
  \begin{split}
 &a\in C([0,T_1];\,\dot B^{\frac{2}{\lambda}}_{\lambda,2}\cap\,L^\infty),\quad
 u\in C([0,T_1];\,\dot B^{-1+\frac{2}{p}}_{p,1})
 \cap  L^1_{T_1}(\dot B^{1+\frac{2}{p}}_{p,1})
 \cap\widetilde L^1_{T_1}(\dot H^2),\\
 &\p_tu\in\widetilde L^1_{T_1}(L^2)
 \cap  L^1_{T_1}(\dot B^{-1+\frac{2}{p}}_{p,1}),
 \,
 \na\Pi\in L^1_{T_1}(L^2)\cap
 L^1_{T_1}(\dot B^{-1+\frac{2}{p}}_{p,1}) \andf M_1\leq \rho(t,x)\leq M_2.
\end{split}
\end{equation}
Then there exists   $t_0\in
(0,T_1)$ such that $u(t_0)\in H^1,$ and it follows from \eqref{space-soln-exist} that $a(t_0)\in \dot B^{\frac{2}{\lambda}}_{\lambda,2}\cap L^\infty$ and $u(t_0)\in B^{-1+\frac{2}{p}}_{p,1}.$ As a consequence, with initial data at time $t_0,$  \eqref{1.2} has a  global solution (see
\cite{DM13, PZZ} for instance) $(\rho, u, \na \Pi)$ so that
\beq \label{S4eq1}
\begin{split}
&(\partial_tu,\nabla^2u,\nabla\Pi)\in \bigl(L^2([t_0,+\infty[;\,L^2)\bigr)^3,\ \
u\in L^\infty([t_0,+\infty[;\,H^1), \\
& \nabla u \in L^2([t_0,+\infty[;\,L^2)\cap L^1_{\mbox{loc}}([t_0,\infty[;L^\infty).
\end{split}
\eeq

On the other hand, we deduce from the inequality \eqref{3.17} that for $t\geq t_0$
$$
\begin{aligned}
&\|u\|_{\widetilde{L}^{\infty}([t_0,t);\,\dot{B}_{p,1}^{-1+\frac{2}{p}})}
+\|u\|_{{L}^{1}([t_0,t);\,\dot{B}_{p,1}^{1+\frac{2}{p}})}
\lesssim
\sum_{q\in\mathbb{Z}}2^{q\bigl(\frac{2}{p}-1\bigr)}(1-e^{-c_p2^{2j}t})
\|\dot{\Delta}_{q}u(t_0)\|_{L^p}\notag\\ &
+\sum_{q\in\mathbb{Z}}2^{q\bigl(\frac{2}{p}-1\bigr)}\bigr(\|[\dot\Delta_q\mathbb{P}, u\cdot\nabla]u\|_{L^1([t_0,t);\,L^p)}
+\|[\dot\Delta_q\mathbb{P}, {\rho}^{-1}]
\bigl(\Delta u-\nabla\Pi\bigr)\|_{L^1([t_0,t);\,L^p)}\bigl),
\end{aligned}
$$
from which, Lemmas \ref{lem2.5} and \ref{lem2.3} and \eqref{S4eq1}, we deduce that
$$
\begin{aligned}
\|u\|_{\widetilde{L}^{\infty}([t_0,t);\,\dot{B}_{p,1}^{-1+\frac{2}{p}})}
+\|u\|_{{L}^{1}([t_0,t);\,\dot{B}_{p,1}^{1+\frac{2}{p}})}
&\lesssim
\|u_0\|_{\dot{B}_{p,1}^{-1+\frac{2}{p}}}
+\|\nabla u\|_{L^2([t_0,t);\,L^2)}^2
\\&
+\|a\|_{\widetilde L^\infty([t_0,t);\,\dot B^{\frac{2}{\lambda}}_{\lambda,2})}
\|\Delta u-\nabla\Pi\bigr\|_{L^1([t_0,t);\,L^2)}\leq C(t),
\end{aligned}
$$
which along with Proposition \ref{prop2.3} ensures that
$$
\|a\|_{\widetilde L^\infty([t_0,t);\,\dot B^{\frac{2}{\lambda}}_{\lambda,2})}
\lesssim
\|a(t_0)\|_{\dot B^{\frac{2}{\lambda}}_{\lambda,2}}
\bigl(1+\|u\|_{\widetilde L^1([t_0,t);\,\dot H^2)})
e^{C\|\nabla u\|_{L^1([t_0,t);\,L^\infty)}}\leq C(t).
$$
Hence we deduce from inequalities \eqref{PMPZ} and \eqref{UMPZ} that
\beno
\|(\partial_tu,\nabla\Pi)\|_{L^1_{loc}([t_0,+\infty);\,\dot{B}_{p,1}^{-1+\frac{2}{p}})}\leq C(t).
\eeno
This completes the  existence part of Theorem \ref{thm1.1}.

\no{\bf Step 2.} Uniqueness of strong solutions.

We divide the uniqueness part further  into the following three sub-steps:

\no{\bf Step 2.1} Propagation of regularity of the density $a$ for $p\in [2,\infty[$.

It follows from Theorem 3.14 of \cite{BCD} that
$$
\|a\|_{L^\infty_t(\dot B^1_{2,1})}
\le
\|a_0\|_{\dot B^1_{2,1}}e^{C\|u\|_{L^1_t(\dot B^1_{\infty,1})}}.
$$
When $p\in ]2,\infty[,$ we deduce from the transport equation of \eqref{1.2} that
$$
\partial_t\partial_j\dot\Delta_qa+u\cdot\nabla\partial_j\dot\Delta_qa
=-\dot\Delta_q(\partial_ju\cdot\nabla a)-[\dot\Delta_q,u\cdot\nabla]\partial_ja,
$$
from which and $\dive u=0,$ we infer
$$
\|\dot\Delta_q\partial_ja(t)\|_{L^{\frac{p}{p-1}}}
\le
\|\dot\Delta_q\partial_ja_0\|_{L^{\frac{p}{p-1}}}
+\int_0^t\|\dot\Delta_q(\partial_ju\cdot\nabla a)\|_{L^{\frac{p}{p-1}}}\,d\tau
+\int_0^t\|[\dot\Delta_q,u\cdot\nabla]\partial_ja\|_{L^{\frac{p}{p-1}}}\,d\tau.
$$
Since $0\le1-\frac{2}{p}<1,$ we get,  by applying  classical commutator's estimate (see Lemma 2.100 and Remark 2.102 in \cite{BCD}), that
\beq\label{S4eq2}
\|\partial_j a\|_{L^\infty_t(\dot{B}^{1-\frac{2}{p}}_{\frac{p}{p-1},\infty})}
\le
\|\partial_j a_0\|_{\dot{B}^{1-\frac{2}{p}}_{\frac{p}{p-1},\infty}}
+\|\partial_j u\cdot\nabla a\|_{\widetilde L^1_t(\dot{B}^{1-\frac{2}{p}}_{\frac{p}{p-1},\infty})}
+C\int_0^t\|\nabla u\|_{L^\infty}
\|\partial_j a\|_{\dot{B}^{1-\frac{2}{p}}_{\frac{p}{p-1},\infty}}\mathrm{d}\tau.
\eeq
It follows from Proposition \ref{prop2.2}, $\dv\,u=0$ and $p\in ]2,\infty[$ that
\begin{equation*}
  \begin{split}
 &\|T_{\partial_ju}\nabla a\|_{L^1_t(\dot{B}^{1-\frac{2}{p}}_{\frac{p}{p-1},\infty})}\lesssim \int_0^t\|\nabla u\|_{L^\infty}
\|\nabla\,a\|_{\dot{B}^{1-\frac{2}{p}}_{\frac{p}{p-1},\infty}}\,d\tau
  \end{split}
\end{equation*}
and
\begin{equation}\label{propog-a-est-1}
  \begin{split}
&\|T_{\nabla a}\partial_ju\|_{\widetilde{L}^1_t(\dot{B}^{1-\frac{2}{p}}_{\frac{p}{p-1},\infty})}+\|R(\partial_ju,a)\|_{\widetilde L^1_t(\dot{B}^{2-\frac{2}{p}}_{\frac{p}{p-1},\infty})}\\
&\lesssim \|\nabla a\|_{L^\infty_t(\dot{B}^{-\frac{2}{p}}_{\frac{2p}{p-2},\infty})}\|\nabla u\|_{\widetilde{L}^1_t(\dot{H}^1)}\lesssim \|\nabla a\|_{L^\infty_t(\dot{B}^{1-\frac{2}{p}}_{\frac{p}{p-1},\infty})}\|\nabla u\|_{\widetilde{L}^1_t(\dot{H}^1)}.
  \end{split}
\end{equation}
By using Bony's decomposition and substituting the above estimates into \eqref{S4eq2}, we deduce from \eqref{3.27-cc}
  that
for $t\leq T_1,$
\beno
\|\nabla a\|_{L^\infty_t(\dot{B}^{1-\frac{2}{p}}_{\frac{p}{p-1},\infty})}
\lesssim
\|\nabla a_0\|_{\dot{B}^{1-\frac{2}{p}}_{\frac{p}{p-1},\infty}}
+C\int_0^t\|\nabla u\|_{L^\infty}\|\na a\|_{\dot{B}^{1-\frac{2}{p}}_{\frac{p}{p-1},\infty}}\,d\tau.
\eeno
Applying Gronwall's inequality gives
\begin{align*}
\|\nabla a\|_{L^\infty_t(\dot{B}^{1-\frac{2}{p}}_{\frac{p}{p-1},\infty})}
\lesssim
\|\nabla a_0\|_{\dot{B}^{1-\frac{2}{p}}_{\frac{p}{p-1},\infty}}e^{C\|\na u\|_{L^1_t(L^\infty)}}\ \ \mbox{for}\ \ t\leq T_1.
\end{align*}
On the other hand, we deduce from \eqref{propog-a-est-1} that
\begin{align*}
\|T_{\nabla a}\partial_ju\|_{ L^1_t(\dot{B}^{1-\frac{2}{p}}_{\frac{p}{p-1},\infty})}+\|R(\partial_ju,a)\|_{\widetilde L^1_t(\dot{B}^{2-\frac{2}{p}}_{\frac{p}{p-1},\infty})}
\lesssim
\int_0^t\bigl \|u\|_{\dot{H}^2}
\|\na a\|_{\dot{B}^{1-\frac{2}{p}}_{\frac{p}{p-1},\infty}}\,d\tau.
\end{align*}
By inserting the above estimate into \eqref{S4eq2}, we infer for $t>t_0$ with $t_0\leq T_1$
\begin{equation}\label{T-2-11}
\begin{split}
\|\nabla a\|_{L^\infty_t(\dot{B}^{1-\frac{2}{p}}_{\frac{p}{p-1},\infty})}
\lesssim&
\|\nabla a(t_0)\|_{\dot{B}^{1-\frac{2}{p}}_{\frac{p}{p-1},\infty}}+\int_{t_0}^t\bigl(\|\nabla u\|_{L^\infty}+\|u\|_{\dot H^2}\bigr)
 \|\na a\|_{\dot{B}^{1-\frac{2}{p}}_{\frac{p}{p-1},\infty}}\,d\tau.
\end{split}
\end{equation}
 We thus get, by applying Gronwall's inequality to \eqref{T-2-11}, that for $t\geq t_0$
\begin{equation*}
\|\nabla a\|_{L^\infty_t(\dot{B}^{1-\frac{2}{p}}_{\frac{p}{p-1},\infty})}
\lesssim
 \|\nabla a(t_0)\|_{\dot{B}^{1-\frac{2}{p}}_{\frac{p}{p-1},\infty}}
\,e^{C(\|\nabla u\|_{L^1_t(L^\infty)}+\|u\|_{L^1_t(\dot H^2)})}\leq C(t),
\end{equation*}
where we used \eqref{S4eq1} in the last step.

\no{\bf Step 2.2}
The uniqueness of solution in case $p=2$.

We shall use the Lagrangian approach to prove the uniqueness (see \cite{DM1, PZZ} for instance).
Let $(\rho,u,\na\Pi)$ be a global solution of \eqref{1.2} obtained in
Theorem \ref{thm1.1}. Then due to $u\in L^1_{loc}(\R^+;Lip),$ we
can define the trajectory $X(t,y)$ of $u(t,x)$ by
$$
\partial_t X(t,y)=u(t,X(t,y)),\qquad X(0,y)=y,
$$
which leads to the following relation between the Eulerian
coordinates $x$ and the Lagrangian coordinates $y$:
\beq\label{u.14}
x=X(t,y)=y+\int_0^tu(\tau, X(\tau,y))\mathrm{d}\tau.
\eeq
Moreover, we can take $T$ to be so small that
\begin{align}
\label{u.15} \int_0^T\|\na
u(t,\cdot)\|_{L^\infty}\mathrm{d} t\leq \f12.
\end{align}
Then for $t\leq T,$ $X(t,y): \R^2\rightarrow\R^2,$
is invertible with respect to $y$ variables, and we denote
$Y(t,\cdot)$ to be its inverse mapping.
Let
\begin{align*}
\bar{u}(t,y)\eqdefa u(t,x)=u(t,X(t,y))\quad\mbox{and}\quad
\bar{\Pi}(t,y)\eqdefa \Pi(t,X(t,y)).
\end{align*}
Then similar to \cite{DM1}, one has
\beq\label{u.15ag}
\bar{u}\in
\wt{L}^\infty_{\mbox{loc}}(\R^+;\dB^0_{2,1}) \quad\mbox{and}\quad
\p^2\bar u, \p_t\bar u, \na\bar{\Pi}\in
L^1_{\mbox{loc}}(\R^+;\dB^0_{2,1}),
\eeq
and
\begin{equation}\label{u.16}
\begin{split}
&\partial_t
\bar{u}(t,y)=(\partial_t u+ u\cdot\nabla u)(t,x),\quad \partial_{x_i} u_j(t,x)= \sum_{k=1}^2\partial_{y_k}
\bar{u}_j(t,y)\partial_{x_i} y_k,
 \end{split}
 \end{equation}
for $ x=X(t,y),\
y=Y(t,x).$

Let $A(t,y)\eqdefa (\na
X(t,y))^{-1}=\na_x Y(t,x),$ then we have
\begin{align*}
 \nabla_x u(t,x)= A(t,y)^T\nabla_y \bar{u}(t,y)\quad\mbox{ and}\quad
\dive_x u(t,x)=\dive( A(t,y) \bar{u}(t,y)),
\end{align*}
and $(\bar{u},
\na_y\bar{\Pi})$
solves
\begin{equation}\label{u.18}
\left\{\begin{array}{l} \displaystyle
\rho_{0}\pa_t\bar{u}-\D_y\bar{u}
+\grad_y\bar{\Pi}=\dv\bigl((A A^T-Id)\na_y\bar{u}\bigr)+(Id-A)^T\na_y\bar{\Pi}, \\
\displaystyle \dv_y\,\bar{u} = \dv\bigl((Id- A)\bar{u}\bigr).
\end{array}\right.
\end{equation}

Before proceeding, we recall the following two lemmas from \cite{DM1}:
\begin{lem}\label{lem2.9}
{\sl Let $p\in[1,\infty[$, then under the assumption that $\int_{0}^{T}\|\na_y\bar{u}\|_{\dot{B}_{p,1}^{\frac{2}{p}}}\mathrm{d}\tau\leq c$ ($c=c(p)>0 $ is a constant), for all $t\in[0,T],$ one has
\begin{align*}
&\|Id-A\|_{\widetilde L^\infty_t(\dot{B}_{p,1}^{\frac{2}{p}})}
\lesssim
\|\na_y\bar{u}\|_{L_{t}^{1}(\dot{B}_{p,1}^{\frac{2}{p}})},\
\|\partial_{t}A\|_{\dot{B}_{p,1}^{\frac{2}{p}}}
\lesssim
\|\na_y \bar{u}\|_{\dot{B}_{p,1}^{\frac{2}{p}}},\
\|AA^{T}-Id\|_{\dot{B}_{p,1}^{\frac{2}{p}}}
\lesssim
\|\na_y\bar{u}\|_{L_{t}^{1}(\dot{B}_{p,1}^{\frac{2}{p}})}.
\end{align*}
}
\end{lem}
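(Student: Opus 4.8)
The plan is to deduce everything from the algebraic relation between $A$ and the displacement gradient, using that $\dot{B}^{\frac2p}_{p,1}(\R^2)$ is a Banach algebra and embeds into $L^\infty$ (the embedding being a direct consequence of Bernstein's inequality, Lemma \ref{lem2.1}). First I would record that, by \eqref{u.14}, $\nabla_yX(t,y)=\Id+B(t,y)$ with $B(t,y)\eqdef\int_0^t\nabla_y\bar u(\tau,y)\,\mathrm{d}\tau$. Since $\|\dot\Delta_qB(t)\|_{L^p}\le\|\dot\Delta_q\nabla_y\bar u\|_{L^1_t(L^p)}$ for every $t\le T$, and since $\widetilde L^1_t(\dot{B}^{\frac2p}_{p,1})=L^1_t(\dot{B}^{\frac2p}_{p,1})$ (the index being $r=1$), the hypothesis gives $\|B\|_{\widetilde L^\infty_t(\dot{B}^{\frac2p}_{p,1})}\le\|\nabla_y\bar u\|_{L^1_t(\dot{B}^{\frac2p}_{p,1})}\le c$, hence $\|B(t)\|_{L^\infty}\lesssim c$. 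Thus, for $c$ small enough (depending only on $p$ and the algebra constant), $\det\nabla_yX\ge\frac12$, the matrix $\Id+B$ is pointwise invertible, and $A=(\Id+B)^{-1}=\sum_{k\ge0}(-B)^k$, the Neumann series converging in $\Id+\dot{B}^{\frac2p}_{p,1}$.

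Next I would treat the three bounds in turn. For the first, I would write $\Id-A=-\sum_{k\ge1}(-B)^k=B\,A=B+B(A-\Id)$, so that the algebra estimate gives $\|\Id-A\|_{\widetilde L^\infty_t(\dot{B}^{\frac2p}_{p,1})}\le\|B\|_{\widetilde L^\infty_t(\dot{B}^{\frac2p}_{p,1})}\bigl(1+C\|\Id-A\|_{\widetilde L^\infty_t(\dot{B}^{\frac2p}_{p,1})}\bigr)$; shrinking $c$ so that $Cc\le\frac12$ lets me absorb the last factor and conclude $\|\Id-A\|_{\widetilde L^\infty_t(\dot{B}^{\frac2p}_{p,1})}\le2\|B\|_{\widetilde L^\infty_t(\dot{B}^{\frac2p}_{p,1})}\le2\|\nabla_y\bar u\|_{L^1_t(\dot{B}^{\frac2p}_{p,1})}$. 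For $\partial_tA$, differentiating $A\,\nabla_yX=\Id$ in time yields $\partial_tA=-A(\partial_t\nabla_yX)A=-A(\nabla_y\bar u)A$; inserting $A=\Id+(A-\Id)$ and using the algebra estimate together with the bound $\|A-\Id\|_{\dot{B}^{\frac2p}_{p,1}}\lesssim1$ just obtained, I get $\|\partial_tA\|_{\dot{B}^{\frac2p}_{p,1}}\lesssim\|\nabla_y\bar u\|_{\dot{B}^{\frac2p}_{p,1}}$. For the last bound I would use the identity $AA^{T}-\Id=(A-\Id)(A^{T}-\Id)+(A-\Id)+(A^{T}-\Id)$, so that the algebra estimate and the first bound give $\|AA^{T}-\Id\|_{\dot{B}^{\frac2p}_{p,1}}\lesssim\|A-\Id\|_{\dot{B}^{\frac2p}_{p,1}}\lesssim\|\nabla_y\bar u\|_{L^1_t(\dot{B}^{\frac2p}_{p,1})}$.

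The main technical point to be careful about is that the products above have to be estimated in the Chemin--Lerner space $\widetilde L^\infty_t(\dot{B}^{\frac2p}_{p,1})$ rather than merely in $L^\infty_t(\dot{B}^{\frac2p}_{p,1})$; the remedy is that Bony's decomposition \eqref{bony} of a product is controlled dyadic-block by dyadic-block with only $L^\infty_t(L^\infty)$ acting on the low-frequency paraproduct factors, so that $\|fg\|_{\widetilde L^\infty_t(\dot{B}^{\frac2p}_{p,1})}\lesssim\|f\|_{\widetilde L^\infty_t(\dot{B}^{\frac2p}_{p,1})}\|g\|_{\widetilde L^\infty_t(\dot{B}^{\frac2p}_{p,1})}$ and the geometric-series manipulation goes through verbatim with these norms. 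Apart from this bookkeeping, each step reduces to a routine application of Proposition \ref{prop2.2} and the Banach algebra property, so no genuinely new obstacle appears; since the statement is borrowed verbatim from \cite{DM1}, one may alternatively just refer there for the full details.
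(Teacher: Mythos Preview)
Your proof is correct and follows exactly the standard route (Neumann series for $(\Id+B)^{-1}$, the Banach algebra property of $\dot B^{2/p}_{p,1}(\R^2)$, and the identity $\partial_tA=-A(\nabla_y\bar u)A$). The paper itself does not prove this lemma at all: it merely quotes it from \cite{DM1}, where the argument is precisely the one you outline.
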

\begin{lem}\label{lem2.10}
{\sl Let $p\in[1,\infty[$,  and $\bar{u}^1$ and $\bar{u}^2$ be two vector fields satisfying $\int_{0}^{T}\|\na_y\bar{u}^{i}(\tau,\cdot)\|_{\dot{B}_{p,1}^{\frac{2}{p}}}\mathrm{d}\tau\leq c$ ($c=c(p)>0 $ is a constant and $i=1,2$) and
$\delta\bar{u}\eqdefa \bar{u}^2-\bar{u}^1$, then  for all $t\in[0,T]$, we have
\beno
\|A^2-A^1\|_{\widetilde L_{t}^{\infty}(\dot{B}_{p,1}^{\frac{2}{p}})}
\lesssim
\|\na_y\delta\bar{u}\|_{L_{t}^{1}(\dot{B}_{p,1}^{\frac{2}{p}})}
\andf
\|\partial_{t}(A^2-A^1)\|_{L_{t}^{1}(\dot{B}_{p,1}^{\frac{2}{p}})}
\lesssim
\|\na_y\delta\bar{u}\|_{L_{t}^{1}(\dot{B}_{p,1}^{\frac{2}{p}})}.
\eeno
}
\end{lem}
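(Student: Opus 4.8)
The plan is to use the explicit representation of $A^i$ in terms of the Lagrangian flow. Since $\bar u^i(t,y)=u^i(t,X^i(t,y))$ and $\partial_tX^i(t,y)=u^i(t,X^i(t,y))$, one gets $X^i(t,y)=y+\int_0^t\bar u^i(\tau,y)\,d\tau$, hence $\nabla_yX^i(t,y)=\mathrm{Id}+\int_0^t\nabla_y\bar u^i(\tau,y)\,d\tau$, so that $A^i=(\nabla_yX^i)^{-1}$ is well defined as long as $\int_0^t\|\nabla_y\bar u^i\|_{\dot B^{2/p}_{p,1}}\,d\tau$ stays small, using that $\dot B^{2/p}_{p,1}(\R^2)\hookrightarrow L^\infty$ is a Banach algebra. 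First I would record, via Lemma \ref{lem2.9}, that under the hypothesis $\int_0^T\|\nabla_y\bar u^i\|_{\dot B^{2/p}_{p,1}}\,d\tau\le c$ one has $\|\mathrm{Id}-A^i\|_{\widetilde L^\infty_t(\dot B^{2/p}_{p,1})}\lesssim c$, so that writing $A^i=\mathrm{Id}+(A^i-\mathrm{Id})$ and using the algebra property of $\dot B^{2/p}_{p,1}$ (and of the associated Chemin--Lerner space $\widetilde L^\infty_t(\dot B^{2/p}_{p,1})$), multiplication by $A^i$ is a bounded operation with a uniform constant.

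For the first estimate I would start from the resolvent identity
\[
A^2-A^1=(\nabla_yX^2)^{-1}-(\nabla_yX^1)^{-1}=A^2\bigl(\nabla_yX^1-\nabla_yX^2\bigr)A^1=-A^2\Bigl(\int_0^t\nabla_y\delta\bar u\,d\tau\Bigr)A^1,
\]
take the $\widetilde L^\infty_t(\dot B^{2/p}_{p,1})$ norm, split each factor $A^i=\mathrm{Id}+(A^i-\mathrm{Id})$, and apply the product/algebra estimates together with the bound on $A^i-\mathrm{Id}$ above; this reduces the claim to $\bigl\|\int_0^\cdot\nabla_y\delta\bar u\,d\tau\bigr\|_{\widetilde L^\infty_t(\dot B^{2/p}_{p,1})}\le\|\nabla_y\delta\bar u\|_{L^1_t(\dot B^{2/p}_{p,1})}$, which is Minkowski's inequality.

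For the second estimate I would differentiate this identity in $t$. From $\partial_t\nabla_yX^i=\nabla_y\bar u^i$ one has $\partial_tA^i=-A^i(\nabla_y\bar u^i)A^i$, and Lemma \ref{lem2.9} gives $\|\partial_tA^i\|_{L^1_t(\dot B^{2/p}_{p,1})}\lesssim\|\nabla_y\bar u^i\|_{L^1_t(\dot B^{2/p}_{p,1})}\le c$. Differentiating the resolvent identity yields
\[
\partial_t(A^2-A^1)=-(\partial_tA^2)\Bigl(\int_0^t\nabla_y\delta\bar u\Bigr)A^1-A^2\,(\nabla_y\delta\bar u)(t)\,A^1-A^2\Bigl(\int_0^t\nabla_y\delta\bar u\Bigr)(\partial_tA^1),
\]
and taking the $L^1_t(\dot B^{2/p}_{p,1})$ norm, putting the $\partial_tA^i$ and the $(\nabla_y\delta\bar u)(t)$ factors in $L^1_t(\dot B^{2/p}_{p,1})$ and everything else in $\widetilde L^\infty_t(\dot B^{2/p}_{p,1})$, each of the three terms is bounded by $\|\nabla_y\delta\bar u\|_{L^1_t(\dot B^{2/p}_{p,1})}$, using again the boundedness of $A^i$ and the bound on $\int_0^\cdot\nabla_y\delta\bar u$.

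The ingredients I would not spell out are the standard Bony-decomposition product estimates in Chemin--Lerner spaces (which hold precisely because $\dot B^{2/p}_{p,1}(\R^2)$ with $2/p>0$ is an algebra embedded in $L^\infty$), together with the elementary resolvent identity for matrices. The only point requiring genuine care is tracking the smallness constant $c$ so that all the factors involving $A^i$ remain uniformly bounded; this is exactly the regime in which Lemma \ref{lem2.9} applies, and it is precisely the hypothesis $\int_0^T\|\nabla_y\bar u^i\|_{\dot B^{2/p}_{p,1}}\,d\tau\le c$ that makes the whole scheme work. Since both $\bar u^1$ and $\bar u^2$ satisfy this bound, there is no further obstacle.
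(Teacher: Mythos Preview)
Your argument is correct and is essentially the standard proof of this lemma as given in \cite{DM1}: the resolvent identity $A^2-A^1=-A^2\bigl(\int_0^t\nabla_y\delta\bar u\,d\tau\bigr)A^1$ together with the algebra structure of $\dot B^{2/p}_{p,1}(\R^2)$ (and of the corresponding Chemin--Lerner spaces) and the smallness of $A^i-\mathrm{Id}$ from Lemma~\ref{lem2.9}. The paper itself does not prove Lemma~\ref{lem2.10}; it is merely recalled from \cite{DM1}, so there is nothing further to compare.
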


Now let $(\rho^i, u^i, \na\Pi^i),$ $i=1,2,$ be two solutions of
\eqref{1.2} which satisfy the regularity properties of
Theorem \ref{thm1.1}. Let $(\bar{u}^i, A^i, \bar{\Pi}^i),$ $i=1,2,$
be given by \eqref{u.14}-\eqref{u.16},  we set
$$
(\delta A,\delta\bar u,\nabla\delta\bar\Pi) \eqdefa (A^2-A^1,\bar
u^2-\bar u^1,\nabla\bar\Pi^2-\nabla\bar\Pi^1).
$$
Then it follows from \eqref{u.18} that the system for $(\delta \bar
u, \nabla\delta\bar \Pi)$ reads
\begin{equation*}
\begin{cases}
&\pa_t\delta\bar u-
\bigl(1+a_0\bigr)
\bigl(\D_y\d\bar u+\grad_y\delta\bar\Pi\bigr)=\d\bar F,\\
&\dv_y\,\delta\bar u = \na\delta\bar u:(Id-A^2)-\na
u^1:\delta A =\dv_y\bigl((Id-A^2)\delta \bar u-\delta A \bar u^1\bigr)\eqdefa\dv_yg,\\
& \Delta\Phi=\dv_yg,\\
&\d\bar{u}|_{t=0}=0,
\end{cases}
\end{equation*}
where
$$
\begin{aligned}
\d\bar F\eqdefa &
 \bigl(1+a_0\bigr)
(Id-A^2)^T\grad_y\delta\bar\Pi
- \bigl(1+a_0\bigr)\bigl(\delta A^T\nabla_y\bar\Pi^1\bigr)
\\&
+\bigl(1+a_0\bigr)
\dv_y\bigl((A^2 (A^2)^T-Id) \nabla_y\delta \bar u + (A^2
(A^2)^T-A^1 (A^1)^T)\nabla_y\bar u_1\bigr).
\end{aligned}
$$
We get, by applying Proposition \ref{prop2.4} with $p=2$, that
\beq\label{S4eq4}
\begin{split}
\|\delta&\bar u\|_{\widetilde{L}_t^\infty(\dot{B}_{2,1}^0)}
+\|(\partial_t \delta\bar u,\nabla_y^2\delta\bar u,\nabla_y\delta\bar\Pi)\|_{{L}_t^1(\dot{B}_{2,1}^0)}
\lesssim
\|\d\bar F\|_{L_t^1(\dot B^0_{2,1})}
+\|\nabla\dv_yg\|_{L_t^1(\dot B^0_{2,1})}
\\
&+\|\partial_tg\|_{L_t^1(\dot B^0_{2,1})}+\bigl(2^m\sqrt{t}+2^{2m}t\bigr)\bigl(\|\partial_tg\|_{L^1_t(L^2)}
+\|\d\bar F\|_{L^1_t(L^2)}\bigr)+\|\nabla\Phi\|_{\widetilde{L}_t^\infty(\dot{B}_{2,1}^{0})\cap {L}_t^1(\dot{B}_{2,1}^{2})}.
\end{split}
\eeq
It follows from Lemmas \ref{lem2.9} and \ref{lem2.10}, $\frac{1}{2}<\frac{1}{2}+\frac{1}{\lambda} \leq 1$, and the law of product in Besov spaces in Proposition \ref{prop2.2} that
\begin{align*}
\|\d\bar F\|_{L_t^1(\dot B^0_{2,1})}
\lesssim &\bigl(1+\|a_0\|_{\dot B^{\frac{2}{\lambda}}_{\lambda,\infty}\cap L^\infty}\bigr)\Bigl(\|(Id-A^2)^T\|_{L_t^\infty(\dot B^1_{2,1})}\|\nabla_y\delta\bar\Pi\|_{L_t^1(\dot B^0_{2,1})}
\\&+\|\delta A^T\|_{L_t^\infty(\dot B^1_{2,1})}\|\nabla_y\bar\Pi^1\|_{L_t^1(\dot B^0_{2,1})}
+\|(A^2 (A^2)^T-Id)\|_{L_t^\infty(\dot B^1_{2,1})}\| \nabla_y\delta \bar u\|_{L_t^1(\dot B^1_{2,1})}\\
& + \|(A^2
(A^2)^T-A^1 (A^1)^T)\|_{L_t^\infty(\dot B^1_{2,1})}\|\nabla_y\bar u_1\|_{L_t^1(\dot B^1_{2,1})}\Bigr)\\
\lesssim
&\|\nabla_y \bar u^2\|_{L_t^1(\dot B^1_{2,1})}\|\nabla_y\delta\bar\Pi\|_{L_t^1(\dot B^0_{2,1})}
+\|(\nabla_y \bar u^1,\nabla_y \bar u^2)\|_{L_t^1(\dot B^1_{2,1})}\| \nabla_y\delta \bar u\|_{L_t^1(\dot B^1_{2,1})}\\
&+\|\nabla_y \bar \Pi^1\|_{L_t^1(\dot B^0_{2,1})}\| \nabla_y\delta \bar u\|_{L_t^1(\dot B^1_{2,1})},
\end{align*}
and
\begin{align*}
 \|\nabla\dv_yg\|_{L_t^1(\dot B^0_{2,1})}&=\|\dv_yg\|_{L_t^1(\dot B^1_{2,1})}\\
&\lesssim\|\na_y\delta\bar u\|_{L_t^1(\dot B^1_{2,1})}\|Id-A^2\|_{L_t^\infty(\dot B^1_{2,1})}+\|\na_y
u^1\|_{L_t^1(\dot B^1_{2,1})}\|\delta A\|_{L_t^\infty(\dot B^1_{2,1})}\\
&\lesssim\|(\nabla_y \bar u^1,\nabla_y \bar u^2)\|_{L_t^1(\dot B^1_{2,1})}\| \nabla_y\delta \bar u\|_{L_t^1(\dot B^1_{2,1})}
\end{align*}
and
$$
\begin{aligned}
\|\partial_tg\|_{L_t^1(\dot B^0_{2,1})}
\lesssim
&\|\partial_tA^2\,\delta\bar u\|_{L_t^1(\dot B^0_{2,1})}
+\|(Id-A^2)\,\partial_t\delta\bar u\|_{L_t^1(\dot B^0_{2,1})}
+\|\partial_t\delta A\,\bar u^1\|_{L_t^1(\dot B^0_{2,1})}
+\|\delta A\,\partial_t\bar u^1\|_{L_t^1(\dot B^0_{2,1})}
\\
\lesssim
&\Bigl(\|(\nabla^2\bar u^2,\,\partial_t\bar u^1)\|_{L_t^1(\dot B^0_{2,1})}
+\| \bar u^1\|_{L_t^\infty(\dot B^0_{2,1})}^{\frac{1}{2}}
\| \bar u^1\|_{L_t^1(\dot B^2_{2,1})}^{\frac{1}{2}}\Bigr)
\\&
\times
\Bigl(\| \delta \bar u\|_{L_t^\infty(\dot B^0_{2,1})}
+\|(\nabla^2\delta \bar u,\,\partial_t\delta\bar u)\|_{L_t^1(\dot B^0_{2,1})}\Bigr).
\end{aligned}
$$
Observing that $\dot{B}_{2,1}^0\hookrightarrow\dot{B}_{2,2}^0$, we have
$$
\|\partial_tg\|_{L^1_t(L^2)}
+\|\d\bar F\|_{L^1_t(L^2)}\leq \|\partial_tg\|_{L^1_t(\dot{B}_{2,1}^0)}
+\|\d\bar F\|_{L^1_t(\dot{B}_{2,1}^0)}.
$$
As $\Phi(0)=0,$ there holds
$
\Phi(t)=\int_0^t\partial_{\tau}\Phi(\tau)d\tau,
$
so that one has
$$
\|\nabla\Phi\|_{\widetilde{L}_t^\infty(\dot{B}_{2,1}^{0})}
\lesssim
\|\partial_tg\|_{L^1_t(\dot B^0_{2,1})}.
$$
Finally let us turn to the estimate of
$\|\nabla\Phi\|_{{L}_t^1(\dot{B}_{2,1}^{2})}.$ Indeed it follows from the law of product that
$$
\begin{aligned}
\|\nabla\Phi\|_{{L}_t^1(\dot{B}_{2,1}^{2})}&\lesssim \|\dv_yg\|_{{L}_t^1(\dot{B}_{2,1}^{1})}
\lesssim
\|\na\delta\bar u\|_{L^1_t(\dot{B}_{2,1}^{1})}
\|Id-A^2\|_{L^\infty_t(\dot{B}_{2,1}^{1})}
+\|\na\bar u^1\|_{L^1_t(\dot{B}_{2,1}^{1})}
\|\delta A\|_{L^\infty_t(\dot{B}_{2,1}^{1})}
\\&
\lesssim
\|\na\bar u^2\|_{L^1_t(\dot{B}_{2,1}^{1})}
\|\na\delta\bar u\|_{L^1_t(\dot{B}_{2,1}^{1})}
+\|\na\bar u^1\|_{L^1_t(\dot{B}_{2,1}^{1})}
\|\na\delta\bar u\|_{L^1_t(\dot{B}_{2,1}^{1})}.
\end{aligned}
$$

By substituting the above estimates into \eqref{S4eq4}, we obtain
\begin{align*}
\|\delta\bar u&\|_{\widetilde{L}_t^\infty(\dot{B}_{2,1}^0)}
+\|(\partial_t \delta\bar u,\nabla_y^2\delta\bar u,\nabla_y\delta\bar\Pi)\|_{{L}_t^1(\dot{B}_{2,1}^0)}
\\
\leq
&C\bigl(\|(\nabla_y^2 \bar u^1,\,\nabla_y^2 \bar u^2,\,\nabla_y \bar \Pi^1,\,\partial_t\bar u^1)\|_{L_t^1(\dot B^0_{2,1})}
+\| \bar u^1\|_{L_t^\infty(\dot B^0_{2,1})}^{\frac{1}{2}}
\| \bar u^1\|_{L_t^1(\dot B^1_{2,1})}^{\frac{1}{2}}\bigr)
\\&
\qquad\times\bigl(\|\delta \bar u\|_{\widetilde L_t^\infty(\dot B^0_{2,1})}
+\|(\partial_t \delta\bar u,\nabla_y^2\delta\bar u,\nabla_y\delta\bar\Pi)\|_{{L}_t^1(\dot{B}_{2,1}^0)}\bigr).
\end{align*}
Then by taking $t$ to be so small that
$$C\bigl(\|(\nabla_y^2 \bar u^1,\,\nabla_y^2 \bar u^2,\,\nabla_y \bar \Pi^1,\,\partial_t\bar u^1)\|_{L_t^1(\dot B^0_{2,1})}+\| \bar u^1\|_{L_t^\infty(\dot B^0_{2,1})}^{\frac{1}{2}}
\| \bar u^1\|_{L_t^1(\dot B^1_{2,1})}^{\frac{1}{2}}\bigr)<1,$$
 we deduce the uniqueness part of Theorem \ref{thm1.1} for
the case when $p=2.$

\no{\bf Step 2.2}
The uniqueness of solution in case  $p\in]2,\infty[.$

Let $(\rho^i, u^i, \na\Pi^i),$ $i=1,2,$ be two solutions of
\eqref{1.2} which satisfy the regularity properties of
Theorem \ref{thm1.1}. We set $\rho\eqdefa\frac{1}{1+a}$ and denote \beno (\delta a,\delta
u,\nabla\delta\Pi) \eqdefa
(a^2-a^1,u^2-u^1,\nabla\Pi^2-\nabla\Pi^1).
\eeno
Then in view of \eqref{1.2}, the system
for $(\delta a,\delta u,\nabla\delta\Pi)$ reads
\beq\label{u.3}
\left\{\begin{array}{l}
\displaystyle \pa_t\delta a+u^2\cdot\nabla\delta a=-\delta u\cdot\nabla a^1,\\
\displaystyle \pa_t\delta u+(u^2\cdot\nabla)\delta u
-(1+a^2)(\Delta\delta u-\grad\delta\Pi)=\d F, \\
\displaystyle \dv\,\delta u = 0, \\
\displaystyle (\delta a,\delta u)|_{t=0}=(0,0),
\end{array}
\right. \eeq
where $\d F$ is determined by
$\d F\eqdefa -(\delta u\cdot\nabla)u^1+\delta a(\Delta u^1-\nabla\Pi^1).$

To estimate $\delta u,$ for integer $k\in\N,$ we first write the momentum equation of \eqref{u.3} as
\beq\label{Pre-1}
\begin{split}
&\partial_t\delta u+(u^2\cdot\nabla)\delta
u-(1+S_k a^2)(\Delta\delta u-\nabla\delta \Pi) =\d \cF_k
\with\\
&\qquad \d \cF_k
\eqdefa(a^2-S_k a^2)(\Delta\delta u-\nabla\delta \Pi) -\delta
u\cdot\nabla u^1+\delta a(\Delta u^1-\nabla\Pi^1).
\end{split}
\eeq
Then we get, by
applying Proposition
\ref{prop2.9} to \eqref{Pre-1}, that for all
$t\in ]0,T]$,
\begin{equation}\label{estimate-uniqueness-velosity-1a}
\begin{aligned}
\| \delta\,u\|_{L^\infty_t(B^{-1}_{2,\infty})}
+\|\delta\,u\|_{\widetilde L^1_t(B^{1}_{2, \infty})}\leq &
Ce^{C\bigl(t+t\|\nabla\,S_ka^2\|_{\widetilde L^\infty_t(B^1_{2,1})}^2+\|u^2\|_{L^1_t(B^1_{\infty,1})}\bigr)}
\\&\times\bigl(\|\d\cF_k\|_{\widetilde L^1_t(B^{-1}_{2,\infty})}
+\|S_k\nabla a^2\|_{\widetilde L^\infty_t(\dot{B}^{1}_{2,1})}
\|\nabla\delta\Pi\|_{\widetilde L^1_t(B^{-1}_{2,\infty})}\bigr).
\end{aligned}
\end{equation}
Notice that
\begin{equation*}
  \begin{split}
  \|\nabla\,S_ka^2\|_{\widetilde L^\infty_t(B^1_{2,1})}
&\lesssim
\|\nabla\,S_ka^2\|_{\widetilde L^\infty_t(L^2)}+
\|\nabla\,S_ka^2\|_{\widetilde L^\infty_t(\dot{B}^1_{2,1})}
 \\
&\lesssim \|a^2\|_{\widetilde L^\infty_t(\dot{B}^1_{2,1})}+ 2^k\|a^2\|_{\widetilde L^\infty_t(\dot{B}^1_{2,1})}\lesssim 2^k\|a^2\|_{\widetilde L^\infty_t(\dot{B}^1_{2,1})}.
  \end{split}
\end{equation*}
By substituting the above estimate into \eqref{estimate-uniqueness-velosity-1a} and using \eqref{space-soln-exist},  we obtain
\begin{equation}\label{estimate-uniqueness-velosity-1}
\begin{aligned}
\|\delta u\|_{{L}^{\infty}_t(B^{-1}_{2,\infty})}&+\|\delta
u\|_{\widetilde{L}^1_t(B^{1}_{2, \infty})}
\leq
Ce^{Ct2^k}
\bigl(\|\nabla\delta\Pi\|_{\widetilde L^1_t(B^{-1}_{2,\infty})}
+\|\d\cF_k\|_{\widetilde L^1_t(B^{-1}_{2,\infty})}\bigr).
\end{aligned}
\end{equation}

On the other hand, we get, by applying  $\dv$ to the momentum equation of \eqref{u.3}, that
\begin{equation}\label{est-uniq-2}
\dv\bigl((1+a^2)\nabla\delta\Pi\bigr)=\dv\,G
\end{equation}
with
$$
G\eqdefa (a^2-S_ma^2)\Delta\delta u+S_ma^2\Delta\delta u-\delta u\cdot\nabla
u^1-u^2\cdot\nabla\delta u +\delta a(\Delta u^1-\nabla\Pi^1)\eqdefa \sum_{\ell=1}^5\mbox{I}_{\ell}.$$

 We deduce from Propositions \ref{prop2.3} and \ref{prop3.2} that, for any small constant $c_0>0$,
  there exist sufficiently large $j_0 \in \mathbb{N}$ and a positive existence time $T_2$ such that
\beq \label{small-refe-1}
\|a^2-S_ja^2\|_{\widetilde L^\infty_{T_2}(B^1_{2,1})}\leq\|a^2-\dot S_ja^2\|_{\widetilde L^\infty_{T_2}(\dot B^1_{2,1})}\leq c_0,\quad\mbox{
for any}\ j\geq\,j_0.\eeq
Then we get, by applying Proposition \ref{prop-H-negtive} to \eqref{est-uniq-2}, that
\begin{equation}\label{4.10}
\begin{split}
\|\grad\delta\Pi\|_{\widetilde{L}^{1}_{t} ({B}^{-1}_{2, \infty})}
\lesssim
 \bigl(1+2^{j}\|a^2\|_{\widetilde L^\infty_t(\dot{B}^{1}_{2,1})}
 \bigl(1+\|a^2\|_{\widetilde L^\infty_t(\dot{B}^{1}_{2,1})}\bigr)\bigr)
 \bigl(\|G\|_{\widetilde L^1_t(B^{-2}_{2,\infty})}
 +\|\dv\,G\|_{\widetilde L^1_t(B^{-2}_{2,\infty})}\bigr).
\end{split}
\end{equation}
While it follows from  Lemma \ref{lem2.1} and product laws in Besov spaces in Proposition \ref{prop2.2} that
\begin{align*}
\|\mbox{I}_1\|_{\widetilde L^1_t(B^{-2}_{2,\infty})}
+\|\dv\,\mbox{I}_1\|_{\widetilde L^1_t(B^{-2}_{2,\infty})}
\lesssim&
\|\mbox{I}_1\|_{\widetilde L^1_t(B^{-1}_{2,\infty})}
\lesssim
\|a^2-S_ma^2\|_{\widetilde L^\infty_t(B^1_{2,1})}
\|\delta u\|_{\widetilde L^1_t(B^1_{2,\infty})}
\end{align*}
and
\begin{align*}
&\|\mbox{I}_2\|_{\widetilde L^1_t(B^{-2}_{2,\infty})}
+\|\dv\,\mbox{I}_2\|_{\widetilde L^1_t(B^{-2}_{2,\infty})}\\
&\lesssim
\|T_{S_ma^2}\Delta\delta u\|_{\widetilde L^1_t(B^{-2}_{2,\infty})}
+\|T_{\Delta\delta u}S_ma^2\|_{\widetilde L^1_t(B^{-2}_{2,\infty})}\\
&\qquad +\|R(S_ma^2,\Delta\delta u)\|_{\widetilde L^1_t(B^{-1}_{2,\infty})}
+\|T_{\nabla S_ma^2}\Delta\delta u\|_{\widetilde L^1_t(B^{-2}_{2,\infty})}+\|T_{\Delta\delta u}\nabla S_ma^2\|_{\widetilde L^1_t(B^{-2}_{2,\infty})}\\
&
\lesssim
(\|S_ma^2\|_{L^\infty_t(L^\infty)}+  2^{m}\|\nabla\,S_ma^2\|_{L^\infty_t(B^0_{2, 1})})
\|\delta u\|_{\widetilde L^1_t(B^0_{2,\infty})} \lesssim 2^{m}\|a^2\|_{\widetilde L^\infty_t(\dot{B}^{1}_{2,1})}\|\delta u\|_{\widetilde L^1_t(B^0_{2,\infty})}.
\end{align*}
Along the same line, one has
\begin{align*}
\|(\mbox{I}_3,\,\dv\,&\mbox{I}_3)\|_{\widetilde L^1_t(B^{-2}_{2,\infty})}+\|\mbox{I}_4\|_{\widetilde L^1_t(B^{-2}_{2,\infty})}
+\|\dv\mbox{I}_4\|_{\widetilde L^1_t(B^{-2}_{2,\infty})}\\
\lesssim &
\|\mbox{I}_3\|_{\widetilde L^1_t(B^{-1}_{2,\infty})}
+\|T_{u^2}\nabla\delta u\|_{\widetilde L^1_t(B^{-2}_{2,\infty})}
+\|T_{\nabla\delta u}u^2\|_{\widetilde L^1_t(B^{-2}_{2,\infty})}+\|R(u^2, \delta u)\|_{\widetilde L^1_t(B^{-1}_{2,\infty})}
\\&+\|T_{\nabla u^2}\nabla\delta u\|_{\widetilde L^1_t(B^{-2}_{2,\infty})}
+\|T_{\nabla\delta u}\nabla u^2\|_{\widetilde L^1_t(B^{-2}_{2,\infty})}
+\|R(\partial_{\ell}u^2, \delta u_{\ell})\|_{\widetilde L^1_t(B^{-1}_{2,\infty})}
\\
\lesssim &
\int_0^t\|\delta u\|_{B^{-1}_{2,\infty}}
\bigl(\|u^1\|_{B^1_{\infty,1}}+\|u^2\|_{B^1_{\infty,1}}\bigr)\mathrm{d}\tau.
\end{align*}
To deal with $\mbox{I}_5$, we write by using Bony's decomposition that
\begin{align*}
\mbox{I}_5&=\delta a(\Delta u^1-\nabla \Pi^1)=T_{\Delta u^1-\nabla \Pi^1}\delta a+T_{\delta a}(\Delta u^1-\nabla \Pi^1)+R(\delta a,\Delta u^1-\nabla \Pi^1).
\end{align*}
It follows from product laws in Besov spaces in Proposition \ref{prop2.2} again that
$$
\begin{aligned}
\|T_{\Delta u^1-\nabla \Pi^1}\delta a\|_{\widetilde L^1_t(B^{-1}_{2,\infty})}
&\lesssim
\int_0^t\|\delta a\|_{B^0_{2,\infty}}
\bigl(\|\Delta u^1\|_{B^{-1}_{\infty,\infty}}
+\|\nabla\Pi^1\|_{B^{-1}_{\infty,\infty}}\bigr)\mathrm{d}\tau
\\
&\lesssim
\int_0^t\|\delta a\|_{B^0_{2,\infty}}
\bigl(\|\Delta\, u^1\|_{B^{-1+\frac{2}{p}}_{p,1}}
+\|\nabla\Pi^1\|_{B^{-1+\frac{2}{p}}_{p,1}}\bigr)\mathrm{d}\tau,
\end{aligned}
$$
and
\begin{align*}
\|T_{\delta a}(\Delta u^1-\nabla \Pi^1)\|_{\widetilde L^1_t(B^{-1}_{2,\infty})}
\lesssim
\int_0^t\|\delta a\|_{B^0_{2,\infty}}
\bigl(\|\Delta u^1\|_{B^{-1+\frac{2}{p}}_{p,1}}
+\|\nabla\Pi^1\|_{B^{-1+\frac{2}{p}}_{p,1}}\bigr)\,\mathrm{d}\tau,
\end{align*}
and
$$
\begin{aligned}
\|R(\delta a,\Delta u^1-\nabla \Pi^1)\|_{\widetilde L^1_t(B^{-1}_{2,\infty})}
&\lesssim
\|R(\delta a,\Delta u^1-\nabla \Pi^1)\|_{L^1_t(B^{0}_{1,\infty})}
\\
&\lesssim
\int_0^t\|\delta a\|_{B^{1-\frac{2}{p}}_{\frac{p}{p-1},\infty}}
\bigl(\|\Delta u^1\|_{B^{-1+\frac{2}{p}}_{p,1}}
+\|\nabla\Pi^1\|_{B^{-1+\frac{2}{p}}_{p,1}}\bigr)\,\mathrm{d}\tau.
\end{aligned}
$$
Since $p>2,$  $\dot B^{-1+\frac{2}{p}}_{p,1}\hookrightarrow
B^{-1+\frac{2}{p}}_{p,1},$
as a consequence, we have
$$
\begin{aligned}
\|(\mbox{I}_5,\,\dv\,\mbox{I}_5)\|_{\widetilde L^1_t(B^{-2}_{2,\infty})}&\lesssim
\|\delta a(\Delta u^1-\nabla \Pi^1)\|_{\widetilde L^1_t(B^{-1}_{2,\infty})}
\lesssim
\int_0^t\|\delta a\|_{B^{1-\frac{2}{p}}_{\frac{p}{p-1},\infty}}
 \|(\Delta u^1,\,\nabla\Pi^1)\|_{\dot B^{-1+\frac{2}{p}}_{p,1}} \,d\tau.
\end{aligned}
$$
where we used the fact that ($p \geq 2$)
$$
\|\Delta u^1\|_{B^{-1+\frac{2}{p}}_{p,1}}
+\|\nabla\Pi^1\|_{B^{-1+\frac{2}{p}}_{p,1}}
\lesssim
\|\Delta u^1\|_{\dot B^{-1+\frac{2}{p}}_{p,1}}
+\|\nabla\Pi^1\|_{\dot B^{-1+\frac{2}{p}}_{p,1}}.
$$
By summarizing the above estimates, we obtain
\begin{equation*}\label{uniqueness-pressure-elli-1-a}
\begin{split}
\|&G \|_{\widetilde L^1_t(B^{-2}_{2,\infty})}
 +\|\dv\,G\|_{\widetilde L^1_t(B^{-2}_{2,\infty})}
\lesssim
 \|a^2-S_ma^2\|_{\widetilde L^\infty_t(B^1_{2,1})}
\|\delta u\|_{\widetilde L^1_t(B^1_{2,\infty})}+2^m\|\delta u\|_{\widetilde L^1_t(B^0_{2,\infty})}
\\&
 +\int_0^t\|\delta u\|_{B^{-1}_{2,\infty}}
 \|(u^1,\,u^2)\|_{B^1_{\infty,1}} \,d\tau+\int_0^t\|\delta a\|_{B^{1-\frac{2}{p}}_{\frac{p}{p-1},\infty}}
 \|(\Delta u^1,\,\nabla\Pi^1)\|_{\dot{B}^{-1+\frac{2}{p}}_{p,1}} \,d\tau.
\end{split}
\end{equation*}
By substituting the above estimates into \eqref{4.10}, we arrive at
\begin{equation}\label{uniqueness-pressure-elli-1-a}
\begin{split}
\|\grad&\delta\Pi\|_{\widetilde{L}^{1}_{t} ({B}^{-1}_{2, \infty})}
\lesssim
 \bigl(1+2^{j}\|a^2\|_{\widetilde L^\infty_t(\dot{B}^{1}_{2,1})}\bigl(1+\|a^2\|_{\widetilde L^\infty_t(\dot{B}^{1}_{2,1})}\bigr)\bigr)\\
 &\times
 \Bigl(\|a^2-S_ma^2\|_{\widetilde L^\infty_t(B^1_{2,1})}
\|\delta u\|_{\widetilde L^1_t(B^1_{2,\infty})}+2^m\|\delta u\|_{\widetilde L^1_t(B^0_{2,\infty})}
\\&
 +\int_0^t\|\delta u\|_{B^{-1}_{2,\infty}}
 \|(u^1,\,u^2)\|_{B^1_{\infty,1}} \,d\tau+\int_0^t\|\delta a\|_{B^{1-\frac{2}{p}}_{\frac{p}{p-1},\infty}}
 \|(\Delta u^1,\,\nabla\Pi^1)\|_{\dot{B}^{-1+\frac{2}{p}}_{p,1}} \,d\tau\Bigr).
\end{split}
\end{equation}

To handle the estimate of $\|\d\cF_k\|_{\widetilde L^1_t(B^{-1}_{2,\infty})}$, we get, by applying the law of product in Besov spaces, Proposition \ref{prop2.2}, that
\begin{equation}\label{unique-pressure-elli-222}
\begin{split}
\|\d\cF_k\|_{\widetilde L^1_t(B^{-1}_{2,\infty})}
&\lesssim
\|a^2-S_k a^2\|_{\widetilde L^\infty_t(B^1_{2,1})}
\bigl(\|\Delta\delta u\|_{\widetilde L^1_t(B^{-1}_{2,\infty})}+
\|\nabla\delta \Pi\|_{\widetilde L^1_t(B^{-1}_{2,\infty})}\bigr)
\\&
+\int_0^t\|\delta u\|_{B^{-1}_{2,\infty}}\|u^1\|_{B^1_{\infty,1}}\,d\tau
+\int_0^t\|\delta a\|_{B^{1-\frac{2}{p}}_{\frac{p}{p-1},\infty}}
\bigl(\|u^1\|_{\dot B^{1+\frac{2}{p}}_{p,1}}
+\|\nabla\Pi^1\|_{\dot B^{-1+\frac{2}{p}}_{p,1}}\bigr)\,d\tau.
\end{split}
\end{equation}
For $m,\,k\geq j_0$ and $t\leq T_2,$ we get,
by substituting  \eqref{small-refe-1}, \eqref{uniqueness-pressure-elli-1-a} and \eqref{unique-pressure-elli-222} into  \eqref{estimate-uniqueness-velosity-1},  that
\begin{equation}\label{unique-pressure-elli-Pi-1}
\begin{aligned}
&\|\delta u\|_{{L}^{\infty}_t(B^{-1}_{2,\infty})}+\|\delta
u\|_{\widetilde{L}^1_t(B^{1}_{2, \infty})}+\|\nabla\delta\Pi\|_{\widetilde{L}^{1}_{t} ({B}^{-1}_{2, \infty})}\\
&
\leq
Ce^{Ct2^k}
  (2^{j} \|a^2-S_ma^2\|_{\widetilde L^\infty_t(B^1_{2,1})}+\|a^2-S_k a^2\|_{\widetilde L^\infty_t(B^1_{2,1})})
\bigl(\|\delta u\|_{\widetilde L^1_t(B^1_{2,\infty})}+\|\nabla\delta\Pi\|_{\widetilde{L}^{1}_{t} ({B}^{-1}_{2, \infty})}\bigr) \\
&\quad
 +Ce^{Ct2^k}2^{j}\bigg(\int_0^t\|\delta u\|_{B^{-1}_{2,\infty}}
 \|(u^1,\,u^2)\|_{B^1_{\infty,1}} \,d\tau+\int_0^t\|\delta a\|_{B^{1-\frac{2}{p}}_{\frac{p}{p-1},\infty}}
 \|(\Delta u^1,\,\nabla\Pi^1)\|_{\dot{B}^{-1+\frac{2}{p}}_{p,1}} \,d\tau\bigg)\\
 &\quad +Ce^{Ct2^k}\,2^{m+j}\|\delta u\|_{\widetilde L^1_t(B^0_{2,\infty})}.
\end{aligned}
\end{equation}
Notice that
 $ \|\delta u\|_{\widetilde L^1_t(B^0_{2,\infty})}
 \lesssim
 \|\delta u\|_{\widetilde L^1_t(B^{-1}_{2,\infty})}^{\f12}
\|\delta u\|_{\widetilde L^1_t(B^{1}_{2,\infty})}^{\f12}$,  we infer
\begin{equation}\label{unique-pressure-elli-Pi-2}
Ce^{Ct2^k}\,2^{m+j}
 \|\delta u\|_{\widetilde L^1_t(B^0_{2,\infty})}
\leq
\frac{1}{2}\|\delta u\|_{\widetilde L^1_t(B^{1}_{2,\infty})}
+Ce^{Ct2^k}\,2^{2m+2j}
\int_0^t\|\delta u\|_{B^{-1}_{2,\infty}}\,\mathrm{d}\tau,
\end{equation}
On the other hand, in view of the first equation in \eqref{u.3}, we deduce from  the classical estimate of the transport equation, that
\begin{align*}
\|\delta a\|_{L_t^\infty({B}^{1-\frac{2}{p}}_{\frac{p}{p-1},\infty})}
&\le
\|\delta u\cdot\nabla a_1\|_{\widetilde{L}_t^1({B}^{1-\frac{2}{p}}_{\frac{p}{p-1},\infty})}
e^{C\|\nabla u^2\|_{L^1_t(L^\infty)}}
\lesssim
\|\delta u\cdot\nabla a^1\|_{L_t^1(B^{1-\frac{2}{p}}_{\frac{p}{p-1},\infty})}.
\end{align*}
Thanks to Proposition \ref{prop2.2}, one has
\begin{align*}
&\|\delta u\cdot\nabla a^1\|_{\widetilde{L}_t^1(\dot{B}^{1-\frac{2}{p}}_{\frac{p}{p-1},\infty})} \lesssim (\|\delta u\|_{L_t^1(L^\infty)}
+\|\delta u\|_{\widetilde{L}_t^1(\dot{B}^1_{2,\infty})})\|\nabla a^1\|_{L_t^{\infty}(\dot{B}^{1-\frac{2}{p}}_{\frac{p}{p-1},\infty})},\\
&\|\delta u\cdot\nabla a^1\|_{\widetilde{L}_t^1(\dot{B}^{0}_{\frac{p}{p-1},1})} \lesssim  \|\delta u\|_{L_t^1(\dot{B}^{\frac{2}{p}}_{2, 1})} \|\nabla a^1\|_{L_t^{\infty}(\dot{B}^{1-\frac{2}{p}}_{\frac{p}{p-1},\infty})},
\end{align*}
which along with \eqref{interpo-complex-1} yields
\begin{align*}
&\|\delta u\cdot\nabla a^1\|_{\widetilde{L}_t^1({B}^{1-\frac{2}{p}}_{\frac{p}{p-1},\infty})} \lesssim \|\delta u\cdot\nabla a^1\|_{\widetilde{L}_t^1(\dot{B}^{1-\frac{2}{p}}_{\frac{p}{p-1},\infty})} +\|\delta u\cdot\nabla a^1\|_{\widetilde{L}_t^1(\dot{B}^{0}_{\frac{p}{p-1},1})}\\
&\lesssim (\|\delta u\|_{L_t^1(L^\infty)}
+\|\delta u\|_{\widetilde{L}_t^1(\dot{B}^1_{2,\infty})}+\|\delta u\|_{L_t^1(\dot{B}^{\frac{2}{p}}_{2, 1})})\|\nabla a^1\|_{L_t^{\infty}(\dot{B}^{1-\frac{2}{p}}_{\frac{p}{p-1},\infty})}\\
&\lesssim (\|\delta u\|_{L_t^1(L^\infty)}
+\|\delta u\|_{\widetilde{L}_t^1({B}^1_{2,\infty})})\|\nabla a^1\|_{L_t^{\infty}(\dot{B}^{1-\frac{2}{p}}_{\frac{p}{p-1},\infty})}.
\end{align*}
Hence, we obtain
\begin{equation}\label{diff-density-esti-1}
\begin{aligned}
\|\delta a\|_{L_t^\infty(B^{1-\frac{2}{p}}_{\frac{p}{p-1},\infty})}
&\lesssim
\bigl(\|\delta u\|_{L_t^1(L^\infty)}
+\|\delta u\|_{\widetilde L_t^1(B^1_{2,\infty})}\bigr)
\|\nabla a^1\|_{L_t^\infty(\dot{B}^{1-\frac{2}{p}}_{\frac{p}{p-1},\infty})}\\
&\lesssim
\|\delta u\|_{L_t^1(L^\infty)}
+\|\delta u\|_{\widetilde L_t^1(B^1_{2,\infty})}.
\end{aligned}
\end{equation}
Inserting \eqref{unique-pressure-elli-Pi-2} and \eqref{diff-density-esti-1} into \eqref{unique-pressure-elli-Pi-1} yields
\begin{equation*}
\begin{aligned}
&\|\delta u\|_{{L}^{\infty}_t(B^{-1}_{2,\infty})}+\|\delta
u\|_{\widetilde{L}^1_t(B^{1}_{2, \infty})}+\|\nabla\delta\Pi\|_{\widetilde{L}^{1}_{t} ({B}^{-1}_{2, \infty})}\\
&
\leq
Ce^{Ct2^k}
  (2^{j} \|a^2-S_ma^2\|_{\widetilde L^\infty_t(B^1_{2,1})}+\|a^2-S_k a^2\|_{\widetilde L^\infty_t(B^1_{2,1})})
\bigl(\|\delta u\|_{\widetilde L^1_t(B^1_{2,\infty})}+\|\nabla\delta\Pi\|_{\widetilde{L}^{1}_{t} ({B}^{-1}_{2, \infty})}\bigr) \\
&\qquad
 +Ce^{Ct2^k}2^{j}\bigg(\int_0^t\|\delta u\|_{B^{-1}_{2,\infty}}
 \|(u^1,\,u^2)\|_{B^1_{\infty,1}} \,d\tau\\
 &\qquad \qquad\qquad\qquad+\int_0^t\bigl(\|\delta u\|_{L_\tau^1(L^\infty)}
+\|\delta u\|_{\widetilde L_\tau^1(B^1_{2,\infty})}\bigr)
 \|(\Delta u^1,\,\nabla\Pi^1)\|_{\dot{B}^{-1+\frac{2}{p}}_{p,1}} \,d\tau \bigg)\\
 &\qquad +Ce^{Ct2^k}\,2^{2m+2j}
\int_0^t\|\delta u\|_{B^{-1}_{2,\infty}}\,\mathrm{d}\tau.
\end{aligned}
\end{equation*}
 Therefore, for given $m,\,k\geq j_0$, taking $c_0>0$ in \eqref{small-refe-1} small enough, we deduce from \eqref{small-refe-1} that any $t\leq T_2,$
\begin{equation*}
\begin{aligned}
&\|\delta u\|_{{L}^{\infty}_t(B^{-1}_{2,\infty})}+\|\delta
u\|_{\widetilde{L}^1_t(B^{1}_{2, \infty})}+\|\nabla\delta\Pi\|_{\widetilde{L}^{1}_{t} ({B}^{-1}_{2, \infty})}\\
&
\leq
 Ce^{Ct2^k}2^{j}\bigg(\int_0^t\|\delta u\|_{B^{-1}_{2,\infty}}
 \|(u^1,\,u^2)\|_{B^1_{\infty,1}} \,d\tau\\
 &\qquad \qquad\qquad\qquad+\int_0^t\bigl(\|\delta u\|_{L_\tau^1(L^\infty)}
+\|\delta u\|_{\widetilde L_\tau^1(B^1_{2,\infty})}\bigr)
 \|(\Delta u^1,\,\nabla\Pi^1)\|_{\dot{B}^{-1+\frac{2}{p}}_{p,1}} \,d\tau \bigg)\\
 &\qquad +Ce^{Ct2^k}\,2^{2m+2j}
\int_0^t\|\delta u\|_{B^{-1}_{2,\infty}}\,\mathrm{d}\tau.
\end{aligned}
\end{equation*}
Then for $T_3\in (0, T_2]$ being small enough, we deduce that for all $t\in [0, T_3]$,
\begin{equation}\label{lip-est-0}
\begin{aligned}
\|\delta u\|_{{L}^{\infty}_t(B^{-1}_{2,\infty})}&+\|\delta
u\|_{\widetilde{L}^1_t(B^{1}_{2, \infty})}
\\&
\lesssim
\int_{0}^{t}\bigl(\|\delta u\|_{L_\tau^1(L^\infty)}
+\|\delta u\|_{\widetilde L_\tau^1(B^1_{2,\infty})}\bigr)
\bigl(\|u^1\|_{\dot B^{1+\frac{2}{p}}_{p,1}}
+\|\nabla\Pi^1\|_{\dot B^{-1+\frac{2}{p}}_{p,1}}\bigr)\,\mathrm{d}\tau.
\end{aligned}
\end{equation}
Let $N$ be an arbitrary positive integer which will be determined later on, we write
$$
\begin{aligned}
\|\delta u\|_{L^1_{\tau}(L^\infty)}
&\le
\|\delta u\|_{L^1_{\tau}(\dot B^0_{\infty,1})}\leq
\bigg(\sum_{-1\leq q\le N}
+\sum_{N+1\le q\le 2N}
+\sum_{q\geq 2N+1}\bigg)\|\Delta_q\delta u\|_{L^1_{\tau}(L^\infty)},
\end{aligned}
$$
from which and Lemma \ref{lem2.1}, we infer
$$
\begin{aligned}
\|\delta u\|_{L^1_{\tau}(L^\infty)}
&\lesssim
2^{N}\|\delta u\|_{L^1_{\tau}(L^2)}
+N\|\delta u\|_{\widetilde L^1_{\tau}( B^1_{2,\infty})}
+2^{-N}\|\nabla\delta u\|_{\widetilde{L}^1_{\tau}(L^\infty)}.
\end{aligned}
$$
If we choose $N$ so that
$$
N
\thicksim
\ln\Bigl(e+\frac{\|\delta u\|_{L^1_{\tau}(L^2)}
+\|\nabla\delta u\|_{L^1_{\tau}(L^\infty)}}
{\|\delta u\|_{\widetilde L^1_{\tau}(B^1_{2,\infty})}}\Bigr),
$$
we obtain
\beq\label{infty-est-1}
\begin{aligned}
\|\delta u\|_{L^1_{\tau}(L^\infty)}
&\lesssim
\|\delta u\|_{\widetilde L^1_{\tau}(B^1_{2,\infty})}
\ln\bigl(e+\frac{\|\delta u\|_{L^1_{\tau}(L^2)}+\|\nabla\delta u\|_{L^1_{\tau}(L^\infty)}}
{\|\delta u\|_{\widetilde L^1_{\tau}(B^1_{2,\infty})}}\bigr)\\
&\lesssim
\|\delta u\|_{\widetilde L^1_{\tau}(B^1_{2,\infty})}
\ln\Bigl(e+\sum\limits_{i=1}^2\frac{\tau\|u^i\|_{L^\infty_{t}(L^2)}
+\|\nabla u^i\|_{L^1_{\tau}(L^\infty)}}
{\|\delta u\|_{\widetilde L^1_{\tau}(B^1_{2,\infty})}}\Bigr).
\end{aligned}
\eeq

Notice that for $\alpha\geq 0$ and $x\in(0,1],$ there holds
$$
\ln(e+\alpha x^{-1})\le\ln(e+\alpha)(1-\ln x)\qquad\mbox{and}
\qquad x\le x(1-\ln x).
$$
Then by plugging \eqref{infty-est-1} into \eqref{lip-est-0}, we find
\begin{equation*}
\begin{aligned}
&\|\delta u\|_{{L}^{\infty}_t(B^{-1}_{2,\infty})}
+\|\delta u\|_{\widetilde{L}^1_t(B^{1}_{2, \infty})}\\
&\quad\lesssim\int_{0}^{t}\|\delta u\|_{\widetilde{L}^1_{\tau}(B^{1}_{2, \infty})}
\bigl(1-\ln\|\delta u\|_{\widetilde{L}^1_{\tau}(B^{1}_{2, \infty})}\bigr)
\bigl(\|u^1\|_{\dot B^{1+\frac{2}{p}}_{p,1}}
+\|\nabla\Pi^1\|_{\dot B^{-1+\frac{2}{p}}_{p,1}}\bigr)\,d\tau.
\end{aligned}
\end{equation*}
As $\int_0^1\frac{dx}{x(1-\ln x)}=+\infty,$ and $\|\Delta u^1\|_{L^2}+\|\nabla\Pi^1\|_{L^2}$ is locally integral in $t \in \mathbb{R}^+$,
we deduce from  Osgood's Lemma (see \cite{fleet} for instance) that $\delta u(t)=0$ for $t\leq T_3,$ which together with
\eqref{diff-density-esti-1} and \eqref{uniqueness-pressure-elli-1-a} implies that
$\delta a(t)=\delta\nabla \Pi(t)=0$ for all $t\in[0,T_3].$

The uniqueness of such solutions on the whole time interval $[0, +\infty)$ then follows by a bootstrap argument, which completes the proof of Theorem \ref{thm1.1}.
\end{proof}

Finally let us present the proof of Corollary \ref{DW}.

\begin{proof}[\bf Proof of Corollary \ref{DW}] We first deduce from Theorem \ref{thm1.1} that the  system \eqref{1.2}   has a unique global solution
$(\rho, u)$ so that
$$
\begin{aligned}
&\rho^{-1}-1 \in C([0,\infty[;\,\dot B^{-1+\frac{2}{p}}_{\frac{2p}{2-p},2}\cap\,L^\infty)
\cap L^\infty(\R_+;\,L^\infty),
\\&
u\in C([0,\infty[;\,\dot B^{0}_{2,1})
\cap L^1_{loc}(\R_+;\,\dot B^{2}_{2,1}),
\\&
\nabla\Pi\in L^1_{loc}(\R_+;\,\dot B^{0}_{2,1})
\andf \,
\partial_tu\in L^1_{loc}(\R_+;\,\dot B^{0}_{2,1}).
\end{aligned}
$$
It follows from \eqref{3.17} that
\beq \label{S4eq6}
\begin{aligned}
\|u\|_{\widetilde L^\infty_t(\dot B^{-1+\frac{2}{p}}_{p,1})}
+\|u\|_{L^1_t(\dot B^{1+\frac{2}{p}}_{p,1})}
\lesssim&
\|u_0\|_{\dot B^{-1+\frac{2}{p}}_{p,1}}
 +\sum_{q\in\Z}2^{q\bigl(\frac{2}{p}-1\bigr)}
 \|[\dot\Delta_q\mathbb{P}, u\cdot\nabla]u\|_{L^1_t(L^p)}
\\&
+\sum_{q\in\Z}2^{q\bigl(\frac{2}{p}-1\bigr)}
\|[\dot\Delta_q\mathbb{P}, {\rho}^{-1}]\bigl(\Delta u-\nabla\Pi\bigr)\|_{L^1_t(L^p)}.
\end{aligned}
\eeq
By applying classical commutator's estimate (see Lemma 2.100 and Remark 2.102 in \cite{BCD}), we have
\beq \label{S4eq7}
\sum_{q\in\Z}2^{q\bigl(\frac{2}{p}-1\bigr)}
 \|[\dot\Delta_q\mathbb{P}, u\cdot\nabla]u\|_{L^1_t(L^p)}
 \lesssim
 \int_0^t\|\nabla u(\tau)\|_{L^\infty}
 \|u(\tau)\|_{\dot B^{-1+\frac{2}{p}}_{p,1}}\,d\tau.
\eeq

To handle the last term in \eqref{S4eq6}, we write, by using homogeneous Bony's decomposition, that
(with $f=\Delta u-\nabla\Pi$)
\begin{align*}
[\dot\Delta_q\mathbb{P},a]f
=\dot\Delta_q\mathbb{P}T_{f}a
+\dot\Delta_q\mathbb{P}R(a,f)
-T'_{\dot\Delta_q\mathbb{P}f}a
-[\dot\Delta_q\mathbb{P},T_a]f.
\end{align*}
Due to $p\in]1,2[,$ it follows from  Lemma \ref{lem2.1} and the law of product in Besov spaces that
\begin{align*}
&\|\dot\Delta_q\mathbb{P}T_{f}a\|_{L^1_t(L^p)}
\lesssim d_q2^{(1-\frac{2}{p})q}
\|a\|_{\widetilde L^\infty_t(\dot{B}_{\frac{2p}{2-p},1}^{-1+\frac{2}{p}})}
\|f\|_{L^1_t(L^2)},\\
&\|\dot\Delta_q\mathbb{P}R(a,f)\|_{L^1_t(L^p)}
\lesssim d_q2^{(1-\frac{2}{p})q}
\|a\|_{L^\infty_t(\dot{B}_{\frac{2p}{2-p},\infty}^{-1+\frac{2}{p}})}
\|f\|_{L^1_t(\dot B^0_{2,1})}.
\end{align*}
While we observe that
\begin{align*}
\|T'_{\dot\Delta_q\mathbb{P}f}a\|_{L^1_t(L^p)}
&\lesssim
\sum_{k\geq q-3}\|\dot{S}_{k+2}\dot{\Delta}_{q}f\|_{L^1_t(L^2)}
\|\dot{\Delta}_k a\|_{L^\infty_t(L^{\frac{2p}{2-p}})}
&\lesssim d_q2^{(1-\frac{2}{p})q}
\|a\|_{L^\infty_t(\dot{B}_{\frac{2p}{2-p},\infty}^{-1+\frac{2}{p}})}
\|f\|_{L^1_t(\dot B^0_{2,1})}.
\end{align*}
Finally, it follows from Lemma \ref{lem-commutator-1} that
\begin{align*}
\|[\dot\Delta_q\mathbb{P},T_a]f\|_{L^1_t(L^p)}
&\lesssim
\sum_{|q-k|\leq4} 2^{-q}
\|\nabla \dot{S}_{k-1} a\|_{L^\infty_t(L^{\frac{2p}{2-p}})}
\|\dot{\Delta}_{k}f\|_{L^1_t(L^2)}\\
&\lesssim d_q2^{q\bigl(1-\frac{2}{p}\bigr)}
\|a\|_{L^\infty_t(\dot B^{-1+\frac{2}{p}}_{\frac{2p}{2-p},\infty})}
\|f\|_{L^1_t(\dot B^0_{2,1})}.
\end{align*}
By summarizing the above estimates, we arrive at
\begin{equation}\label{COMMU}
\sum_{q\in\Z}2^{q\bigl(\frac{2}{p}-1\bigr)}
\|[\dot\Delta_q\mathbb{P},a]f\|_{L^1_t(L^p)}
\lesssim
\|a\|_{\widetilde L^\infty_t(\dot{B}_{\frac{2p}{2-p},1}^{-1+\frac{2}{p}})}
\|f\|_{L^1_t(\dot B^0_{2,1})}.
\end{equation}

By substituting the estimates \eqref{S4eq7} and \eqref{COMMU} and then
applying Gronwall's inequality, we achieve
$$
\begin{aligned}
\|u\|_{\widetilde L^\infty_t(\dot B^{-1+\frac{2}{p}}_{p,1})
\cap L^1_t(\dot B^{1+\frac{2}{p}}_{p,1})}
\lesssim
\bigl(\|u_0\|_{\dot B^{-1+\frac{2}{p}}_{p,1}}
+\|a\|_{\widetilde L^\infty_t(\dot B^{-1+\frac{2}{p}}_{\frac{2p}{2-p},1})}
\bigl(\|u\|_{L^1_t(\dot B^2_{2,1})}
+\|\nabla\Pi\|_{L^1_t(\dot B^0_{2,1})}\bigr)\bigr)
e^{C\|\nabla u\|_{L^1_t(L^\infty)}}.
\end{aligned}
$$

While it follows from
 Theorem 3.14 of \cite{BCD} that
$$
\|a\|_{\widetilde L^\infty_t(\dot B^{-1+\frac{2}{p}}_{\frac{2p}{2-p},1})}
\lesssim
\|a_0\|_{\dot B^{-1+\frac{2}{p}}_{\frac{2p}{2-p},1}}
e^{C\|u\|_{L^1_t(\dot B^1_{\infty,1})}}.
$$

As $p>1,$ we deduce from the  inequality \eqref{presure-crit-1} that
$$
\|\nabla\Pi\|_{L^1_t(\dot B^{{2\over p}-1}_{p,1})}
\lesssim
\|u\otimes u\|_{L^1_t(\dot B^{\frac{2}{p}}_{p,1})}
+\|a\Delta u\|_{L^1_t(\dot B^{-1+\frac{2}{p}}_{p,1})}
+\sum_{q\in\mathbb{Z}}2^{q\bigl(-1+\f2p\bigr)}\|[\dot\Delta_q,a]\nabla\Pi\|_{L^1_t(L^p)},
$$
from which and \eqref{COMMU}, we infer
 $$
\|\nabla\Pi\|_{L^1_t(\dot B^{{2\over p}-1}_{p,1})}
\lesssim
\|u\|_{L^2_t(\dot B^{\frac{2}{p}}_{p,1})}^2
+\|a\|_{\widetilde L^\infty_t(\dot B^{-1+\frac{2}{p}}_{\frac{2p}{2-p},1})}
\bigl(\|u\|_{L^1_t(\dot B^{1+\frac{2}{p}}_{p,1})}
+\|\nabla\Pi\|_{L^1_t(\dot B^0_{2,1})}\bigr).
$$
Similarly, we  deduce from the momentum equation of \eqref{1.3} that $\partial_tu\in L^1_t(\dot B^{-1+\frac{2}{p}}_{p,1})$.
This completes the proof of Corollary \ref{DW}.
\end{proof}

\bigbreak \noindent {\bf Acknowledgments.}
 G. Gui is supported in part by National Natural Science Foundation of China under Grants 12371211 and 12126359.
  P. Zhang is partially  supported by National Key R$\&$D Program of China under grant 2021YFA1000800, K. C. Wong Education Foundation and by National Natural Science Foundation of China under Grant 12031006.

\end{document}